\newcommand{\Q}{\pazocal{Q}}  
\DeclareMathOperator*{\argmin}{arg\,min}
\spnewtheorem{theorem}{Theorem}{\bfseries}{\itshape}
\spnewtheorem{corollary}[theorem]{Corollary}{\bfseries}{\itshape}
\spnewtheorem{lemma}[theorem]{Lemma}{\bfseries}{\itshape}
\spnewtheorem{proposition}[theorem]{Proposition}{\bfseries}{\itshape}
\spnewtheorem{definition}[theorem]{Definition}{\bfseries}{\itshape}
\spnewtheorem{remark}[theorem]{Remark}{\bfseries}{\upshape}
\spnewtheorem{assumption}[theorem]{Assumption}{\bfseries}{\itshape}
\renewcommand{\paragraph}[1]{{\bf #1.}}
\definecolor{myred}{rgb}{0.8,0,0}  
\noindent\textbf{Proof of {#1}:}}%
\def \Q{\mathbb{Q}}             
\def \1{{\bf 1}}                
\def \0{{\bf 0}}
\definecolor{myred}{rgb}{0.9,0,0}  
\definecolor{mygreen}{rgb}{0,0.7,0}  
\definecolor{myblue}{rgb}{0.2,0,0.8}  
\definecolor{orange}{rgb}{1,0.6,0}  
\definecolor{olive}{rgb}{0.5,0.5,0}  
\definecolor{mylila}{rgb}{0.8,0.5,0.2}  
\definecolor{mygrey}{rgb}{0.6,0.6,0.6}  
\definecolor{mybrown}{rgb}{0.65,0.16,0.16}  
\definecolor{mymaroon}{rgb}{0.11,0.0,0.0}
\newcommand{\ZR}{Z}
\newcommand{\zR}{z}
\newcommand{\muR}{\mu}
\newcommand{\WR}{W}
\newcommand{\kappaR}{\kappa}
\newcommand{\sigmaR}{\sigma}
\newcommand{\tildemu}{\tilde{\mu}}
\DeclareMathAlphabet{\pazocal}{OMS}{zplm}{m}{n}
\let\mathcal\pazocal
\begin{document}
	
	\title{Stochastic Optimal Control of Prosumers in a District Heating System.  \\  
		\thanks{This work was carried out with the aid of a grant from the International Development Research Centre, Ottawa, Canada, www.idrc.ca, and with financial support from the Government of Canada, provided through Global Affairs Canada (GAC), www.international.gc.ca;
			Brandenburg University of Technology Cottbus-Senftenberg, Erasmus+ and research scholarship.\newline
			The Author gratefully acknowledges the  support by the German Academic Exchange Service, Grant/Award Number: 57417894.
		}
}

\titlerunning{Stochastic Optimal Control of Prosumers in a District Heating System.}        

\author{Maalvlad\'edon Ganet Som\'e}

\authorrunning{M. Ganet Som\'e, R. Wunderlich} 

\institute{Maalvlad\'edon Ganet Som\'e \at
	University of Rwanda, Department of Mathematics, P.O. Box 4285, Kigali , Kigali, Gikondo-Street, KK 737; \\
	African Institute for Mathematical Sciences, Ghana, P.O. Box LGDTD 20046, Legon, Accra, ! Shoppers Street, Spintex, Accra, Ghana;  
	\email{\texttt{maalvladedon@aims.edu.gh}}           
}

\date{Version of  \today}

\maketitle

\begin{abstract}
	We consider a network of residential heating systems in which several prosumers satisfy their heating and hot
	water demand using solar thermal collectors and services of a central producer. Overproduction of heat can
	either be stored in a local thermal storage or sold to the network. Our focus is the minimization of the prosumers'
	expected discounted total cost from purchasing and selling thermal energy and running the system. This decision making
	problem under uncertainty about the future production and consumption of thermal energy is formulated as a
	stochastic optimal control problem and solved with dynamic programming techniques. We present numerical
	results for the value function and the optimal control.
	
	\keywords{Residential heating system \and Prosumer \and Stochastic optimal control \and Dynamic programming \and Hamilton-Jacobi-Bellman equation \and Thermal energy storage}
	%
	\subclass{ 93E20       
		\and 49L20           
		\and 91G80           
		\and 65N06           
	}	
\end{abstract}	

\section{Introduction}
The use of fossil fuels is gradually becoming problematic given its limited nature and the threats (including pollution and environmental problems) their use pose to our planet. In recent years, most researchers
\cite{aid2020mckean,alasseur2020extended,alasseur2023mfg} focused on the electricity system by promoting the penetration of renewable energy sources as a way to address energy transition and to meet the Paris agreement which is to keep the rise in mean global temperature preferably to $1.5^{\circ}C$.  However studies \cite{alam2008new,ghisi2007electricity,hepbasli2009review,liu2014optimization,perez2008review,singh2010factors} reveal that in the US and parts of Europe, for buildings, heating and hot water account for $18\%$ to $30\%$ of total energy consumption. Hence this requires urgent measures to improve the energy efficiency in buildings and all areas in order to mitigate the impact of climate change. Thermal networks like district heating systems are considered to be essential to boost energy system efficiency and save cost \cite{connolly2012heat,lund2012heat}.\newline 
District heating is an efficient means of supplying heat to buildings. It comprises a network of pipes which connect buildings in a neighbourhood or city for a reliable heat delivery. 
We consider a residential heating system or household (see Figure \ref{Prosumermodel}) comprising a local renewable heat production source (solar collector) and storage units taken as water tanks for simplicity and equipped with heat and ordinary pumps. Heat production occurs through a solar collector. In our model, the internal storage (IS) serves as a short term storage unit while the external storage (ES) is used for a long term storage. Since the solar collector's production is intermittent and highly dependent on weather conditions, we connect our household to a community heating system (CHS) for a reliable heat supply at a given cost. The CHS we have in mind is a cold district heating network which is a variant of traditional district heating networks and operates at low transission temperatures. These low transmission temperatures require the usage of a heat pump. For further reference on district heating systems and low energy emission buildings, interested readers may refer to \cite{chen2013energy,heiselberg2009application,nielsen2012excess,tommerup2007energy,tommerup2006energy} and their refernces therein.\newline
We do not model  all details of heat production and transmission in the building. In lieu of that, we motivate as in \cite{alasseur2020extended} a residual demand, which in our modelling perspective is the difference between consumption (demand) and production (supply).  
The manager of the residential heating system is referred to as a prosumer since it is both a producer and consumer of heat. We further assume that the CHS has the obligation to satisfy the residual demand of the prosumer. 
Throughout this project, we assume that the residual demand can either be satisfied using the services of the CHS or the ES.\newline
For an unsatisfied demand, the prosumer can either purchase heat from CHS at a cost or escape CHS market power by discharging ES provided it is not empty. Becasue of the fact that we consider a cold district heating, purchasing heat from CHS requires the use of a heat pump which generates additional cost due to high electricity consumption while the discharging ES requires the use of an ordinary pump which rather generates a relatively low cost. Similarly, for an overproduction, the prosumer can either sell excess heat to CHS for a revenue, or charge ES subject to loss to the environment and pay a cost from operating the ordinary pump.\newline
The goal of the prosumer is to minimize the expected total aggregated discount cost of satisfying its hot water and heating demand. This leads to a challenging optimization problem with uncertainty due to the highly intermittent production of the solar collector and the random consumption of the prosumer affected by the weather conditions.\newline
To the best of our knowledge, our work is closest to \cite{takam2023stochastic}, where the author formulated and solved a stochastic optimal control problem of residential heating systems with a geothermal storage. The work assumes several heat production sources including electricity, fuel and renewable (solar) energy. However in \cite{takam2023stochastic}, the system is standalone in the sense that it is not connected to a CHS. Instead of a geothermal storage, we consider a water tank and connect our system to a CHS where prosumers can satisfy their demand by purchasing or selling heat.\newline
Our contributions include a mathematical formulation of a residential heating system with a careful calibration of model parameters. We also formulate a stochastic optimal control problem with state-dependent control constraints and derived the associated Hamilton-Jacobi-Bellmann (HJB) equation. In addition, based on the dynamic programming principle (DPP), we derive a discrete-time scheme which we solve to obtain the optimal strategies and the value function. We also perform extensive numerical experiments to study the impact of both seasonality and storage insulation to our system.\newline
This paper is structured as follows. In Sect. \ref{Model}, we present our model and its hypotheses, introduce the heat price formulation and discuss the state and control constraints. In Sect. \ref{soc}, we formulate the stochastic optimal control problem, derive a numerical scheme for solving the HJB equation and motivate the boundary conditions. In Sect. \ref{numericalresults}, we discuss our numerical results for various scenarios and conclude in Sect. \ref{conclusion}.

\begin{figure}[ht!]			
	\centering
	\includegraphics[width=1.0\textwidth]{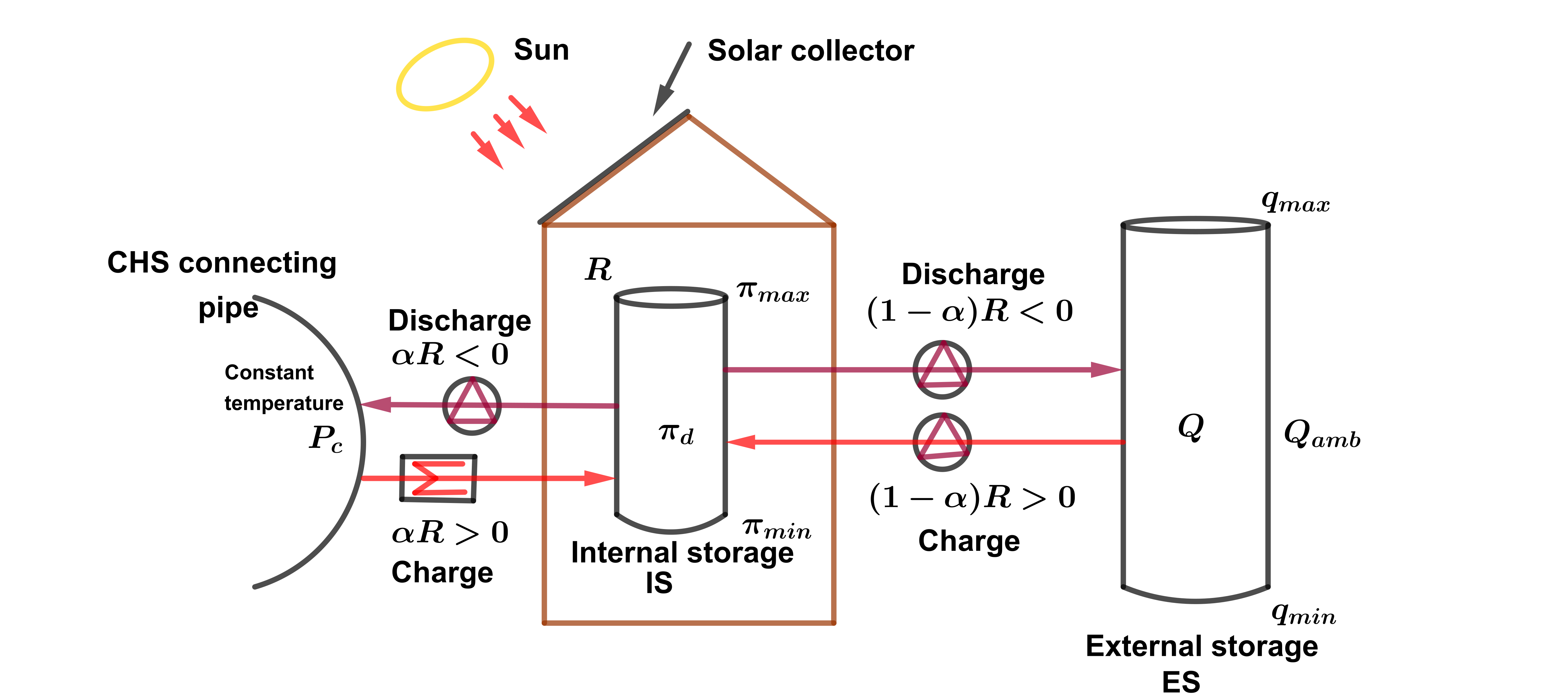}
	\caption[A Prosumer model.]{Model of the prosumer's residential heating system.}
	\label{Prosumermodel}
\end{figure}

\section{Model}
\label{Model}
We formulate the problem of a cost-optimal management of the prosumer's residential heating system in a district heating system as a stochastic optimal control problem in continuous time.\newline
Let $T>0$ be a finite planning horizon and assume a filtered probability space $(\Omega,\mathcal{F},\mathbb{F},\mathbb{P})$ which supports the Brownian motion $\WR$. The filtration $\mathbb{F}=\mathbb{F}^{\WR}=(\mathcal{F}^{\WR}_t)_{t\in[0,T]}$ satisfies the usual conditions. All processes are assumed to be $\mathbb{F}-$adapted.

\subsection{Control system}
We model a control system for the residential heating system of a prosumer which covers the case of a residual demand with seasonality as in \cite{takam2023stochastic}. In our modelling perspective, we assume that the prosumer can purchase or sell thermal energy from or to CHS. We also assume that the interaction between prosumer and CHS when purchasing thermal energy involves a heat pump, while selling to CHS rather requires the service of an ordinary pump. The heat pump increases the temperature from the constant temperature $P_c$ in the connecting pipe to a desidered temperature $\pi_d$ in the IS. In that case, $P_c$ and $\pi_d$ are referred as the heat pump's inlet and outlet temperatures, respectively. The ordinary pump, however serves to move the thermal energy to the connecting pipe.\newline

\subsection{Deseasonalized residual demand}
The dynamics of the deseasonalized residual demand $\ZR\in\mathbb{R}$ is described by a zero-mean reversion level Ornstein-Uhlenbeck (O-U) process of the form
\begin{equation}
	\label{residualdemand}
	\mathrm{d}\ZR(t) = -\kappaR \ZR(t)\mathrm{d}t + \sigmaR\mathrm{d}\WR(t), \quad \ZR(0)=\ZR_0\in \mathbb{R},
\end{equation} 
where the standard Brownian motion $W$ models the uncertainties in solar collector's production and prosumer's heat consumption due to the randomness in the weather factors. \newline
Let the bounded and deterministic function $\muR: [0,T]\rightarrow \mathbb{R}$ denote the seasonality function, modelling the seasonal behaviour of the residual demand with a typical choice as
\begin{equation}
	\label{seasfunc}
	\muR(t)=L_{0}+\sum^{m}_{i=1} L_i\cos\Big(\frac{2\pi}{\rho_i}(t-t_i)\Big), 
\end{equation}
with $L_{0}$ the long term mean residual demand, $L_i$, $\rho_i$ and $t_i$ represent respectively, the amplitude, length and a reference time of the seasonality component $i$, and $m$ is the number of seasonality components. The calibration of $L_0$ and $L_i$ is provided in Appendix \ref{calibrationL_0L}. \newline
A typical choice is $m = 2$ and $\rho_1 = 1$ year, $\rho_2 = $ half-day. Hence $\rho_1$ accounts for the yearly seasonalities and $\rho_2$ the half-day seasonalities due to the peaks in the morning and evening.\newline
Models with time dependent mean reversion level are also proposed in \cite{chen2008semi,takam2023stochastic}.\newline
In the following, we introduce the function $R(t)$ to denote the prosumer's residual demand which we model for all $t\in [0,T]$ as
\begin{equation}
	\label{residdemand}
	R(t) = \muR(t) + \ZR(t).
\end{equation}
We use the convention that $R>0$ (i.e. $\muR+\ZR>0$) refers to unsatisfied demand and $R<0$ (i.e. $\muR+\ZR<0$) to overproduction.\newline
Let $\alpha \in [0,1]$ denote the prosumer's choice of the proportion of residual demand to be satisfied from CHS. Hence, we have the decomposition $R = \alpha R + (1-\alpha)R$ which implies that the prosumer can satisfy its residual demand with $\alpha R$ from CHS and $(1-\alpha)R$ from the ES. In our modelling perspectives, satisfying the residual demand from the CHS means either purchasing (for an unsatisfied demand) or selling (for an overproduction). Similarly, satisfying the residual demand from the ES means either discharging (for an unsatisfied demand) or storing (for an overproduction). In the lemma below, we deduce the differential form of the residual demand $R$ for all $t\in [0,T]$.

\begin{lemma} 
	Let $\muR: [0,T] \rightarrow \mathbb{R}$ be a bounded deterministic differenctiable function. Then for $\tildemu(t) = \muR(t) + \frac{1}{\kappaR}\dot{\mu}(t)$, the residual demand $R$ satisfies the SDE
	\begin{equation}
		\mathrm{d}R(t) = \kappaR(\tildemu(t)-R(t))\mathrm{d}t + \sigma \mathrm{d}\WR(t), \quad R(0) = R_0,
	\end{equation}
\end{lemma}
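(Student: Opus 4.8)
The plan is to obtain the SDE for $R$ directly from its defining relation $R(t) = \muR(t) + \ZR(t)$ in \eqref{residdemand}, together with the known dynamics \eqref{residualdemand} of the Ornstein--Uhlenbeck process $\ZR$. Since $\muR$ is deterministic and differentiable, it has finite variation and vanishing quadratic variation, so no It\^o second-order correction arises and the differential of the sum is simply the sum of the differentials.

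First I would write
\begin{equation*}
\mathrm{d}R(t) = \mathrm{d}\muR(t) + \mathrm{d}\ZR(t) = \dot\muR(t)\,\mathrm{d}t + \big(-\kappaR \ZR(t)\,\mathrm{d}t + \sigmaR\,\mathrm{d}\WR(t)\big),
\end{equation*}
inserting \eqref{residualdemand} for the second summand.

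Next comes the key step: re-express the O--U state through the residual demand by inverting the definition, namely $\ZR(t) = R(t) - \muR(t)$, and substitute this into the drift. This turns the drift into $\dot\muR(t) - \kappaR\big(R(t)-\muR(t)\big) = \kappaR\big(\muR(t) + \tfrac{1}{\kappaR}\dot\muR(t)\big) - \kappaR R(t)$, where I merely factor out $\kappaR$. Recognizing the bracketed expression as $\tildemu(t)$ yields the announced drift $\kappaR(\tildemu(t)-R(t))$, while the diffusion term $\sigma\,\mathrm{d}\WR(t)$ is carried over unchanged. The initial condition follows at once from $R(0) = \muR(0) + \ZR(0) =: R_0$.

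I do not expect any genuine obstacle here; the computation is a one-line substitution. The only point deserving a word of care is the justification that $\muR$ contributes no quadratic-variation (It\^o) term, which is immediate from its assumed differentiability and deterministic, hence finite-variation, character.
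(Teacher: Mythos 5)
Your proposal is correct and follows essentially the same route as the paper: differentiate $R(t)=\muR(t)+\ZR(t)$, insert the O--U dynamics \eqref{residualdemand}, and substitute $\ZR(t)=R(t)-\muR(t)$ to recognize the drift as $\kappaR(\tildemu(t)-R(t))$, which is exactly the paper's observation that $\dot{\muR}(t)=\kappaR(\tildemu(t)-\muR(t))$. The added remark on the vanishing quadratic variation of the deterministic differentiable $\muR$ is a harmless extra justification not spelled out in the paper.
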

\begin{proof}
	Recall that $R(t) = \muR(t)+\ZR(t)$. Taking the derivative on both sides, we obtain
	\begin{equation}
		\mathrm{d}R(t) = \dot{\mu}(t)\mathrm{d}t + \mathrm{d}\ZR(t).
	\end{equation}
	The result follows recalling Equation \eqref{residualdemand} and noting that $\dot{\mu}(t) = \kappaR(\tilde{\mu}(t)-\muR(t))$.
\end{proof}

\begin{figure}[ht!]			
	\centering
	\includegraphics[width=1.0\textwidth]{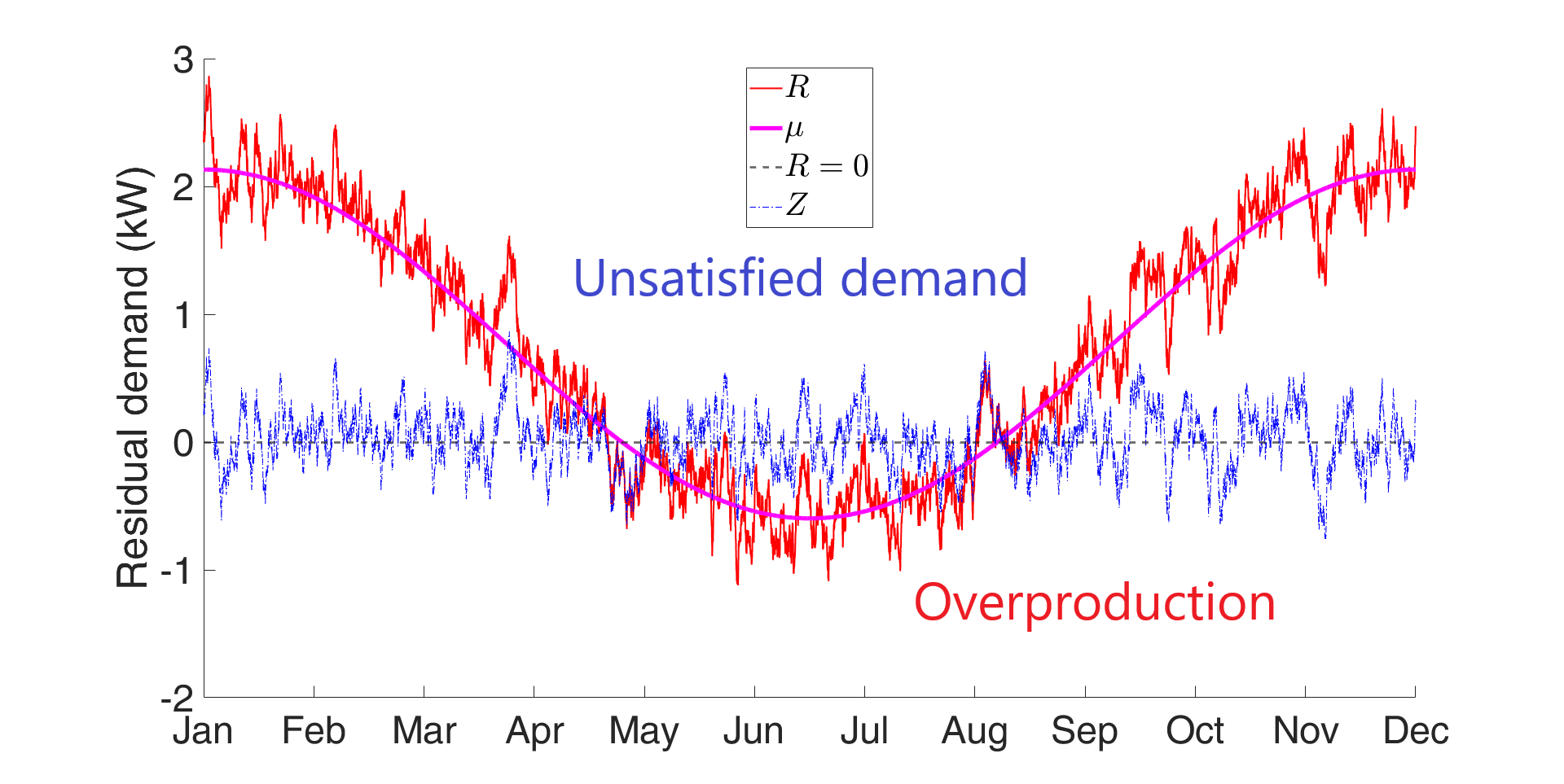}
	\caption[Residual demand.]{Residual demand.}
	\label{residfig}
\end{figure}

\begin{remark} \quad \\[-4ex]
	\begin{enumerate}
		\item From Equation \eqref{residdemand}, we deduce that  $\mathbb{E}[R(t)] \rightarrow \muR(t)$ as $t\rightarrow \infty$, implying that $\muR(t)$ is the long term mean of the residual demand.
		\item From Equation \eqref{residdemand}, we also deduce that $ \ZR(t) = R(t) - \muR(t)$, further implying that $\ZR$ is the deviation of the residual demand $R$ from the long term mean $\muR$. Hence the justification of the terminology deseasonalized residual demand.
		\item In the following, the formulation in Equation \eqref{residdemand} allows to work with general seasonality functions $\muR$.
	\end{enumerate}
\end{remark}

\subsection{External storage}
In the following, we consider as storage a water tank where the water mass and specific heat capacity are given by $m_Q [kg]$ and $c_{P} [KWh/kg~K]$, respectively. The surface area of the tank is denoted as $A [m^2]$. \newline 
Given the temperature $Q$ in the external storage, the amount $E$ of thermal energy stored in the external storage is given by
\begin{equation}
	\label{thermalenergy1}
	E(t) = m_Qc_PQ(t).
\end{equation} 
From the heat fluxes and the Newton cooling law with heat transfer coefficient  $\gamma$ [$kW/m^2 K$] and ambient temperature around the ES $Q_{amb}$ [$^{\circ}$C], the change in the thermal energy of the water inside the ES is given by
\begin{equation}
	\label{thermalenergy2}
	\mathrm{d}E(t) = -[(1-\alpha(t))R(t) + A\gamma(Q(t)-Q_{amb}(t))]\mathrm{d}t, \quad E(0) = E_0.
\end{equation}
The first term $(1-\alpha(t))R(t)$ represents the ES charging or discharging rate. Indeed for an unsatisfied demand ($R(t)>0$) and for a non-empty ES ($Q(t)>q_{min}$), the prosumer discharges the storage at a rate $(1-\alpha(t))R(t)>0$. For an overproduction ($R(t)<0$) and for a non-full ES ($Q(t)<q_{max}$), the prosumer can charge the storage at the rate $(1-\alpha(t))R(t)<0$. The second term $A\gamma (Q(t)-Q_{amb}(t))$ is the rate of heat transfer with the environment. \newline
From Equations \eqref{thermalenergy1} and \eqref{thermalenergy2}, the dynamics of the temperature in the ES is given by
\begin{equation}
	\label{es}
	\mathrm{d}Q(t) = \frac{-1}{m_Qc_{P}}[(1-\alpha(t))R(t)+A\gamma (Q(t)-Q_{amb}(t))]\mathrm{d}t, \quad Q(0)=Q_{0}\in [q_{min},q_{max}].
\end{equation}
\begin{remark}\quad \\[-4ex]
	\begin{itemize}
		\item 	 In Equation \eqref{es}, the ambient temperature $Q_{amb}(t)$ is modelled as time dependent to capture the changes of this predictable and slowly varying ambient quantity. A large $\gamma$ implies large losses per unit of time prohibiting long term storage. Similarly, a small $\gamma$ implies small losses per unit of time allowing long term storage. The calibration of $\gamma$ is provided in Appendix \ref{calibration_gamma}.
		\item In the case $\alpha = 0$, the prosumer does not satisfy its residual demand from the CHS. In the event of an unsatisfied demand ($R(t)>0$), $\alpha = 1$ implies that the prosumer purchases all the residual demand from CHS. However, for an overproduction ($R(t)<0$), $\alpha = 1$ implies that the prosumer sells all the residual demand to CHS.
	\end{itemize}
\end{remark}

\begin{figure}[ht!]			
	\centering
	\includegraphics[width=1.0\textwidth]{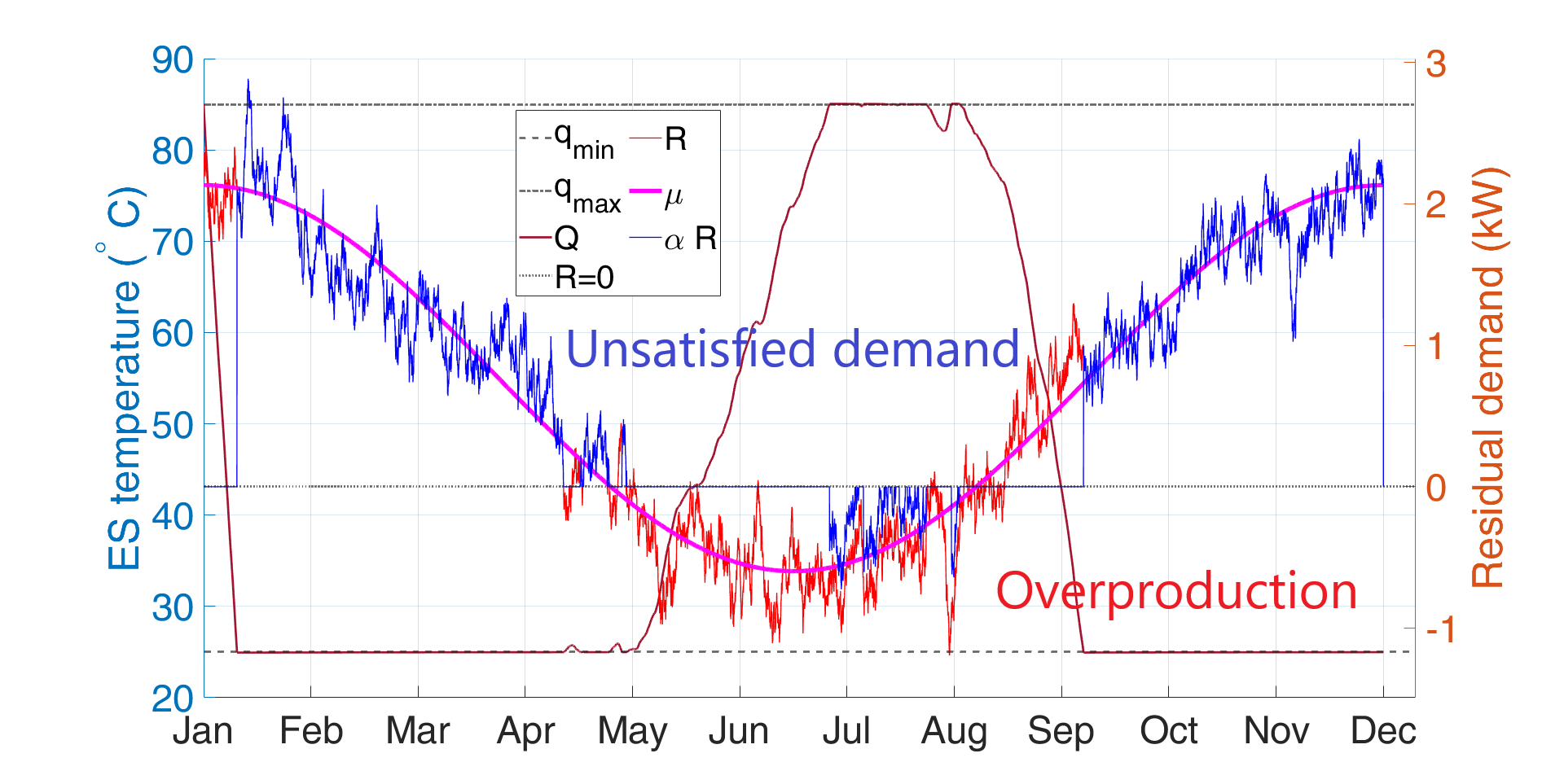}
	\caption[Temperature in external storage.]{Temperature in external storage with residual demand and a given control .}
	\label{EStemp}
\end{figure}

In Figure \eqref{EStemp}, we consider the control use the ES as much as possible. In this formulation, for an unsatisfied demand, we discharge ES as long as it is non-empty (i.e. $Q(t)>q_{min}$). For an overproduction we charge ES and compensate for the loss to the environment when $Q(t)=q_{max}$, i.e. the storage is full.

\subsection{Internal storage}
In our modelling perspectives, the internal storage is a cylinder allowing minimum and maximum temperatures $\pi_{min}$ and $\pi_{max}$, respectively and serves the purpose of a short-term storage of thermal energy. It is filled with water which is later heated from the solar collector to produce thermal energy in order to satisfy the prosumer's heat and hot water demand. 
In the following, we do not model the dynamics of the internal storage, rather we treat it as a ``black box".

\begin{remark}
	Possible models for the internal storage include an ODE when considering a non-stratified storage and a PDE for the case of a stratified storage.
\end{remark}

\subsection{Heat price}
We distinguish time-depdendent buying and selling prices denoted respectively by $S_{buy}(t)$ and $S_{sell}(t)$ for all $t\in [0,T]$. We assume an underlying contract between CHS and prosumer binding the former to always satisfy both the unsatisfied demand or overproduction of the latter. As an incentive, CHS has the right to set $S_{buy}(t)$ and $S_{sell}(t)$.\newline 
In this project, we model a bid-ask spread for the heat price as
\begin{equation}
	\label{heatprice}
	S_{buy}(t) = L^S_{0}+ \sum^{k}_{i=1} L^S_i\cos\Big(\frac{2\pi}{\rho^S_i}(t-t^S_i)\Big), \quad \text{and} \quad S_{sell}(t) = S_{buy}(t) - \xi,
\end{equation}
where $\xi>0$ denotes the spread, $L^S_{0}$ $[\EUR/kWh]$ the long term mean heat buying price, $L^S_i$ $[\EUR/kWh]$, $\rho^S_i$ $[h]$ and $t^S_i$ $[h]$ represent respectively, the amplitude, length and a reference time of the seasonality component $i$. $k$ represents the number of seasonality components. In the above formulation, we assume that heat price can be influenced by yearly, hourly and daily components for example. 
\begin{remark} In the following we discuss the above heat price formulation.
	\begin{enumerate}
		\item If CHS charges too high a buying price, the prosumer can escape CHS' market power by using ES ``as much as possible" and (or) diversify its thermal energy production sources as well increasing the production capacity of the solar collector. However charging too low a buying price to avoid this situation could hinder its operational abilities.
		\item The above price formulation avoids arbitrage opportunities, since if $S_{sell}(t) > S_{buy}(t) + \xi$ for all $t\in [0,T]$, a prosumer could purchase heat at $S_{buy}(t)$ and immediately sell it for $S_{sell}(t)$. In this scenario, we can interpret $\xi$ as transaction cost (cost of electricity consumption) during buying and selling.
		\item Though we considered $S_{buy}(t)>0$, one can also allow a negative heat buying price $S_{buy}(t)$. This scenario occurs in the event the CHS' storage is full yet most (probably all) prosumers do not purchase heat from CHS due to their individual overproduction. In this case, since the CHS sets the heat buying, it has the freedom substantially decrease $S_{buy}(t)$ to incentivise prosumers to purchase its heat.\newline
		A similar situation is observed for electricity prices in power markets.
	\end{enumerate}
\end{remark}
The state of the control system at time $t\in [0,T]$ is given by
\begin{itemize}
	\item $\ZR(t)$ denoting the deseasonalized residual demand [kW] and
	\item $Q(t)$ representing the temperature level in ES [$^{\circ}$C].
\end{itemize}
\subsection{Constraints}
In the following we formulate and discuss both state and control constraints. The first are imposed, while the latter emanate physically from our modelling perspective.

\subsubsection{State constraints} We impose physical constraints to the states of our system. The ES  is assumed to have a maximum and minimum storage capacity respectively denoted by $q_{max}$ and $q_{min}$. Thus we derive the condition $Q(t)\in [q_{min}, q_{max}]$. A fully charged (or full) ES is represented by $Q(t) = q_{max}$ and an empty storage by $Q(t) = q_{min}$. 

\subsubsection{Control constraints} From the system's requirements, we formulate control constraints which are derived using the different scenarios $Q(t) = q_{min}$ and $Q(t) = q_{max}$ coupled with the sign of the residual demand, $R(t)>0$ (unsatisfied demand) and $R(t)<0$ (overproduction). The operational constraints the prosumer faces, lead to state-dependent control constraints.
\begin{enumerate}
	\item For an empty ES (i.e. $Q(t) = q_{min}$), no discharging is allowed, only charging. Thus if:
	\begin{itemize}
		\item $R(t) > 0$, then $(1-\alpha(t))R(t) = 0$ implying $\alpha(t)=1$,
		\item $R(t) < 0$, no further restriction than $\alpha(t)\in [0,1]$.
	\end{itemize}
	\item For a full ES (i.e. $Q(t)=q_{max}$), we consider the following scenarios.
	\begin{itemize}
		\item $R(t) > 0$, there is no further restriction than $\alpha(t)\in [0,1]$,
		\item $R(t) < 0$ and Equation \eqref{es} imply that $(\alpha(t)-1)R(t)-A\gamma (q_{max}-Q_{amb}(t))\le 0$. \newline
		Thus $\alpha(t) \ge 1 + \frac{A\gamma (q_{max}-Q_{amb}(t)}{R(t)} =: \widetilde{c}(t,R(t))$ This implies that $1-\widetilde{c}(t,R(t)) \ge 0$ is the maximal proportion of relative demand $R(t)$ which can still be stored in the ES to compensate for the loss to the environment. If $1-\widetilde{c}(t,R(t)) \ge 1$, the complete overproduction $R(t)$ can be stored. In the following, we define $c^*(t,R(t)) = \widetilde{c}(t,R(t))\vee 0$ and choose $\alpha(t) = c^*(t,R(t))$. 
	\end{itemize} 
\end{enumerate}
In light of the above, we obtain the following			
\[\alpha(t) \in \begin{cases}
	\begin{array}{ll}
		[0,1], & \text{for~} Q(t)>q_{min} ~\quad \ZR(t) \ge - \muR(t), \\
		\{1\}, & \text{for~} Q(t)=q_{min}, \quad \ZR(t) \ge - \muR(t),
	\end{array}  
\end{cases}
\]

\[\quad ~~\alpha(t) \in \begin{cases}
	\begin{array}{ll}
		[0,1], & \text{for~} Q(t)<q_{max} \quad \ZR(t) < - \muR(t), \\
		[c^*(t),1], & \text{for~} Q(t)=q_{max},\quad \ZR(t) < - \muR(t). 
	\end{array}
\end{cases}
\]
We denote by $\mathcal{K}_a$ the state dependent set of feasible controls defined by

\[ \mathcal{K}_a(t,\zR,q) =  \begin{cases}
	\begin{array}{ll}
		[0,1], & q>q_{min}, \quad \zR \ge - \muR(t), \\
		\{1\}, & q=q_{min}, \quad \zR \ge - \muR(t), \\
		[0,1], & q<q_{max},\quad \zR < - \muR(t), \\
		[c^*(t),1], & q=q_{max},\quad \zR < - \muR(t). 
	\end{array} 
\end{cases}\]
\begin{figure}[ht]			
	\centering
	\includegraphics[width=1\textwidth]{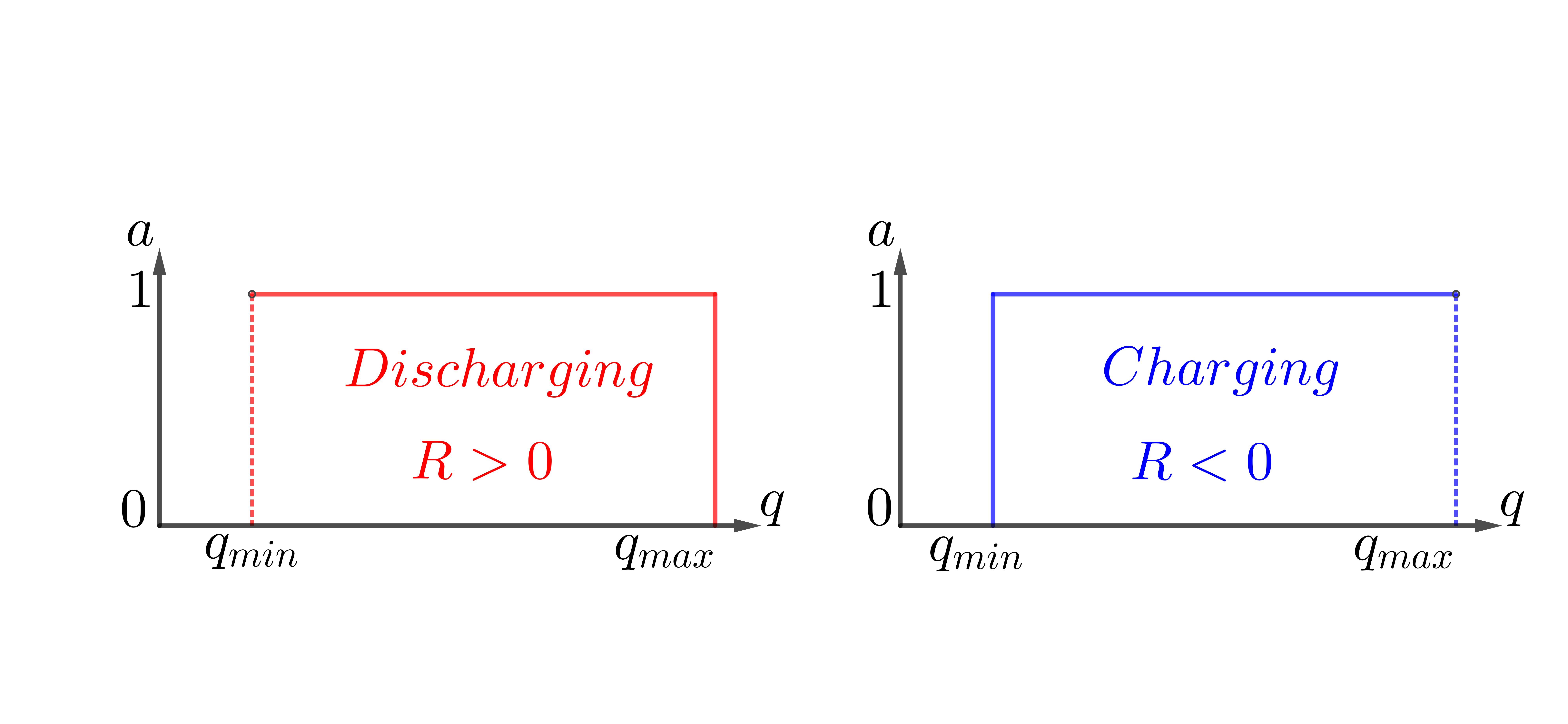}
	\vspace{-1.5cm}
	\caption{Set of feasible controls $\mathcal{K}_a$ with unsatisfied demand (Left) and overproduction (Right) for an imperfect insulation and non-constant seasonality.}
	\label{Nctsfeassets}
\end{figure}

\paragraph{Interpretation}
\begin{enumerate}
	\item In the case of an unsatisfied demand, the prosumer can either purchase thermal energy from the CHS or discharge ES.
	\begin{itemize}
		\item[-] If ES is empty (i.e. $Q(t)=q_{min}$), no discharging is allowed. Then the prosumer satisfies all residual demand from the CHS.
		\item[-] If $Q(t)>q_{min}$, the prosumer has the option to either discharge ES or purchase thermal energy from CHS. This choice depends on the heat buying price and the cost of electricity to run the pumps (heat and ordinary).
	\end{itemize}
	\item In the event of an overproduction, the prosumer either stores excess energy in the ES or sells overproduction to the CHS for a revenue.
	\begin{itemize}
		\item[-] If ES is not properly filled, i.e. $Q(t)<q_{max}$, the prosumer either stores the excess in ES or sells to CHS depending on the heat selling price.
		\item[-] If ES is full, thus the prosumer can charge but only for compensating the loss due to the heat transfer to the environment, or sell excess to CHS. 
	\end{itemize}
\end{enumerate}
%
	
	\section{Stochastic optimal control}
	\label{soc}
	In this section we formulate a stochastic optimal control problem for a representative prosumer. The objective of this prosumer is to satisfy its demand for heating and hot water while minimizing payments to CHS and the cost of electricity consumption to operate the pumps.\newline
	We formulate a mathematical model to find the value and the optimal strategy for a residential heating system and consider an associated stochastic optimal control problem with state process $X=(\ZR,Q)$. From Equations \eqref{residualdemand} and \eqref{es}, the state process follows the dynamics
	\begin{align}
		\label{states}
		\mathrm{d}\ZR(t) &= f(\ZR(t))\mathrm{d}t + \sigmaR\mathrm{d}\WR(t), \qquad \quad \ZR(0)=\ZR_0\in \mathbb{R}, \nonumber\\
		\mathrm{d}Q(t) &= g(\ZR(t),Q(t),\alpha(t))\mathrm{d}t, \qquad \quad Q(0) = Q_{0}\in [q_{min},q_{max}],
	\end{align}
	where $f(\zR) = -\kappaR \zR$ and $g(\zR,q,a)=\frac{-1}{m_Qc_{P}}((1-a)(\muR(t)+\zR)+A\gamma (q-Q_{amb}))$.\newline
	Note that only the storage level $Q$ is controlled, but not the residual demand $\ZR$ and we shall write $Q^\alpha$ instead of $Q$ to indicate this dependence.
	\subsection{Running costs}
	The running costs formulation comprises several components made of the cost (revenue) of buying (selling) heat from (to) CHS, and the cost of running the heat and ordinary pumps.\newline
	To satisfy its residual demand, the prosumer can purchase heat from CHS for a price $S_{buy}(t)$ or sell heat to CHS for a price $S_{sell}(t)$ for $t\in[0,T]$.\newline
	The cost (revenue) per unit of time of buying (selling) heat from (to) CHS denoted $C_0$ is defined by
	\[ C_0(t,\zR,a) = \left\{
	\begin{array}{ll}
		a(\muR(t)+\zR)S_{buy}(t), \qquad & \zR \ge - \muR(t), \\
		a(\muR(t)+\zR)S_{sell}(t), \qquad & \zR < - \muR(t).
	\end{array} 
	\right. \]
	In order to satisfy its demand from CHS for $R(t)\ge 0$, the prosumer uses a heat pump to increase the temperature of water contained in the connecting pipe, hence generating a cost due to electricity consumption.\newline
	Let $S_{el}$ denote the elctricity price, $\pi_{min}$ the minimum temperature in IS and $\pi_d$ the outlet temperature of the heat pump. \newline 
	\begin{assumption}
		\label{C1ineq}
		We assume that the condition $q_{max} > \pi_d > \pi_{min} > P_c$ holds.
	\end{assumption}
	Under Assumption \ref{C1ineq}, the cost $C_1$ of using the heat pump ,  can be defined by
	\[ C_1(t,\zR,a) = \left\{
	\begin{array}{ll}
		a(\muR(t)+\zR)(d_1+d_2(\pi_d-P_c))S_{el}, & \zR \ge - \muR(t), \\
		-a(\muR(t)+\zR)d_1S_{el}, & \zR < - \muR(t),
	\end{array} 
	\right. \]
	where $d_1$ and $d_2~[K^{-1}]$ are positive constants with their calibration provided in Appendix \ref{calibrationd1_d2}. $d_1$ penalizes a high water flow rate in the pipe, meaning that a high $d_1$ implies a high flow rate of water. $d_2$ penalizes a high temperature difference between $P_c$ and $\pi_d$. A high $d_2$ means that more electricity is consummed to raise the temperature from $P_c$ to $\pi_d$. For $R(t) \ge 0$, the formulation of $C_1$ takes into consideration both the consumption of electricity to transport the heat and increase the temperature at the desired level $\pi_d$. In the event $R(t) < 0$, $C_1$ takes only into consideration the cost of electricity consumed to transport the heat to the CHS' storage since no increase of temperature is required.\newline
	In satisfying its demand from ES, the prosumer uses an ordinary pump for the heat transfer, hence generating a cost $C_2$ also due to its electricity consumption. This cost of using an ordinary pump during heat transfer between IS and ES is defined by
	\begin{equation*}
		C_2(t,\zR,a) = (1-a)|\muR(t)+\zR|d_1S_{el}.
	\end{equation*}
	In the above formulation, the functions $C_0,~C_1$ and $C_2$ depend on time $t$ through the seasonality function.\newline
	Hence the running cost defined by $\Psi(t,\zR,a) = C_0(t,\zR,a) + C_1(t,\zR,a) + C_2(t,\zR,a)$ can be represented as
	\begin{equation}
		\label{psi_P}
		\Psi(t,\zR,a) =\left\{
		\begin{array}{ll}
			(\muR(t)+\zR)(a[S_{buy}(t) + d_2(\pi_d-P_c)S_{el}] + d_1S_{el}), \quad & \zR \ge - \muR(t) \\ \\
			(\muR(t)+\zR)(aS_{sell}(t)-d_1S_{el}), \quad & \zR < - \muR(t).
		\end{array} 
		\right.
	\end{equation}
	\begin{remark}
		We assume a constant electricity price since our prosumer is a small consumers of electricity and as such do not pay the randomly fluctuating market price, but some fixed tariff.
	\end{remark}
	\subsection{Terminal cost}
	\label{Terminalcost}
	In our model with finite time horizon $T$, we also consider a final cash flow depending on ES storage level $Q^\alpha(T)$ given by the function $\Phi(Q^\alpha(T))$. In our case, this function includes a storage contract with penalties for failing to leave ES at a prescribed level. Let $q_{crit}$ denote a critical temperature level. Further, let the liquidation price $S_{liq}$ satisfy $S_{liq} < S_{sell}(T)$, and the penalty price $S_{pen}$ also satisfy $S_{pen} > S_{buy}(T)$. Then, we formulate the terminal cost as 
	\begin{equation}
		\label{phi_P}
		\Phi(q) =
		\left\{ \begin{array}{ll}
			S_{pen}m_Qc_P(q_{crit}-q), & q<q_{crit},\\
			-S_{liq}m_Qc_P(q-q_{crit}), & q\geq q_{crit}. 
		\end{array} \right.
	\end{equation}
	\begin{remark}
		From Equation \eqref{phi_P}, one can easily recover similar cases of \textit{ storage expires worthless}, \textit{liquidation of the storage} and \textit{penalty payment} as in \cite{shardin2017partially}. 
	\end{remark}
		Given a control process $\alpha = (\alpha_t)_{t\in[0,T]}$, the performance criterion $J(t,x;\alpha)$ is the expected aggregated total discounted cash flow over the time interval $[0,T]$ defined by
		\begin{equation}
			\label{perfunct}
			J(t,x,\alpha) = \mathbb{E}_{t,x}\Big[\int_t^T e^{-\delta s}\Psi(s,\ZR(s),\alpha(s))
			\mathrm{d}s + e^{-\delta T}\Phi(Q^\alpha(T))\Big],
		\end{equation}
		for $x = (\zR,q)\in \mathbb{R}\times [q_{min},q_{max}]$. 
		$\delta\ge 0$ denotes the discount rate and $\mathbb{E}_{t,x}[\cdot]$ is the conditional expectation given that at initial time $t$, $X(t)=x$.\newline
		A progressively measurable control process $\alpha$ w.r.t the filtration $\mathbb{F}$, of Markov type, i.e.,
		$\alpha(t) = \tilde{a}(t,X(t))$ for all $t\in [0,T]$ with measurable function $\tilde{a}$, and satisfying the control constraints $\alpha(t)\in \mathcal{K}_a(t,X(t))$ is called admissible. The class of admissible controls is denoted by $\mathcal{A}$ and defined as
		\begin{align}
			\mathcal{A} &= \{(\alpha(t))_{t\in [0,T]} ~| \quad \alpha ~\text{is}~ \mathbb{F}-\text{progressively measurable}, \alpha(t) = \tilde{a}(t,X(t)),\nonumber\\ &\qquad \forall t\in [0,T], \tilde{a}(t,x)\in \mathcal{K}_a(t,x) ~\text{and ~Equations~} \eqref{states} ~\text{and~} \eqref{perfunct} ~\text{are~well~defined}\}.
		\end{align}
		The prosumer's objective is to minimize the performance function over all admissible controls in $\mathcal{A}$. We define the value function for all $(t,x)\in [0,T]\times \mathbb{R}\times [q_{min},q_{max}]$ by
		\begin{equation}
			V(t,x) = \inf_{\alpha\in \mathcal{A}}J(t,x,\alpha).
		\end{equation}
		A control $\alpha^*\in \mathcal{A}$ satisfying $V(t,x) = J(t,x,\alpha^*)$ is called optimal control.\newline
		Let $h>0$ be such that $t+h<T$. As in \cite{pham2009continuous}, we can derive the following dynamic programming principle (DPP)
		\begin{align}
			\label{dpp}
			V(t,x) &= \inf_{\alpha\in \mathcal{A}}\mathbb{E}_{t,x}\Big[\int_{t}^{t+h} e^{-\delta (s-t)}\Psi(t,\ZR(s),\alpha(s))
			\mathrm{d}s \nonumber\\
			&\hspace{2.5cm} + e^{-\delta h}V(t+h,R(t+h),Q^\alpha(t+h))\Big].
		\end{align}
		\section{Discrete-time numerical scheme}
		We motivate a finite difference scheme for the solution of the optimal control problem based on the theory developed in \cite{duffy2013finite}. For computational purposes we truncate the domain $\mathcal{X} = \mathbb{R}\times [q_{min},q_{max}]$ to $\overline{\mathcal{X}} = \mathcal{S}\times [q_{min},q_{max}]$, where $\mathbb{R}$ is replaced by a closed interval $\mathcal{S} = [\zR_{min},\zR_{max}]$ such that for a given tolerance $\epsilon \ll 1$, $\mathbb{P}(\ZR(t)\in \mathcal{S})\ge 1-\epsilon$ for all $t\in [0,T]$, where $\zR_{min}$ and $\zR_{max}$ are respectively, the minimum and maximum residual demand.\newline
		With $\frac{\sigmaR}{\sqrt{2\kappaR}}$, the asymptotic standard deviation of the deseasonalized OU-process $\ZR$ and using the $c_\epsilon\sigma$-rule, we have $\zR_{min} = -c_\epsilon \frac{\sigmaR}{\sqrt{2\kappaR}} ~\text{and~} \zR_{max} = c_\epsilon \frac{\sigmaR}{\sqrt{2\kappaR}}$, where $c_\epsilon = 3,4,\text{~or~}5$.\newline
		Let $N_t, N_{\zR}$ and $N_q$ denote the mesh sizes in the $t,\zR,q$-directions respectively. Define
		\begin{align*}
			\mathcal{G} &= \mathcal{G}_t\times \mathcal{G}_{\zR} \times \mathcal{G}_q = [t_0, \ldots,t_{N_t}]\times [\zR_0,\ldots,\zR_{N_{\zR}}]\times [q_0,\ldots,q_{N_q}],
		\end{align*}
		where $	t_n = t_0+n\Delta t$, $z_{\ell} = \zR_{min}+\ell\Delta \zR$, $q_j = q_{min}+j\Delta q$, for $n\in \mathcal{N}=\{0,1,\ldots,N_t\}$, $\ell=0,\ldots,N_{\zR}$ and $j=0,\ldots,N_q$, as a 3-dimensional equidistant grid on $\overline{\mathcal{X}}$ with the temporal and spatial step sizes 
		\begin{equation*}
			t_{n+1}-t_n=:\Delta t = \frac{T}{N_t},\quad \Delta q = \frac{q_{max}-q_{min}}{N_q}, \quad \Delta \zR = \frac{\zR_{max}-\zR_{min}}{N_{\zR}}.
		\end{equation*}
		In the sequel, we formulate a discrete-time scheme starting from DPP. It is an alternative approach to the semi-Lagrangian scheme in \cite{chen2008semi,ware2013accurate} which allows us to avoid using the time reversal technique. The idea for the derivation of the scheme consists of 6 parts. We start by fixing the triple $(t_n,\zR_\ell,q_j)$ and assume that for $t\in[t_n,t_{n+1})$, the stochastic state is fixed and known, i.e. $\ZR(t) = \zR_{\ell}$ and further assume that the optimal control is also fixed and unknown, i.e. $\alpha(t) = \alpha^n_{\ell,j}$. Next, for the fixed and known $\zR_{\ell}$ and letting $a=\alpha^n_{\ell,j}$ we define the path of the temperature $\Q^{a}(t)$, for $t\in [t_n,t_{n+1})$. Using Euler's discretization, we obtain the approximation $\Q^{a,n+1}_{j(\ell,n)}$. This is the arrival point of the ES temperature level at time $t_{n+1}$ knowing that at time $t_n$, the ES temperature level was at $q_j$ for a fixed deseasonalized residual demand $\zR_{\ell}$. Now, from the DPP and under a given assumption, we derive the approximate optimal control $\alpha^{*n}_{\ell,j}$. Afterwards, we "release" the stochastic state, i.e. $\ZR$ is now stochastic and substitute $\alpha^{*n}_{\ell,j}$ in DPP to remove the infimum term. Applying the Feynman-Kac's formula, we obtain a terminal condition problem in terms of the value function which we discretize and form a linear system of equations in order to get the value functions. 
		Assumption \ref{AsumptrProsNoCN} below is useful in deriving the approximation $\Q^{a,n+1}_{j(\ell,n)}$ which is important in formulating the discrete scheme starting from the DPP. 
		\begin{assumption}
			\label{AsumptrProsNoCN}
			We assume that for $t\in [t_n,t_{n+1})$, $a = \alpha^n_{\ell, j}$ is fixed but unknown, and further assume that for $t\in [t_n,t_{n+1})$, $Z(t) = \zR_\ell$. In addition, for $t\in [t_n,t_{n+1})$, we assume that $\muR(t)$ and $Q_{amb}(t)$ are piecewise constant, i.e.
			$\muR(t) = \displaystyle \sum_{n=0}^{N_t-1}\muR^n \mathbbm{1}_{[t_n,t_{n+1})}(t), \quad \muR(T) = \muR^N$ and $Q_{amb}(t) = \displaystyle \sum_{n=0}^{N_t-1}Q_{amb}^n \mathbbm{1}_{[t_n,t_{n+1})}(t), \quad Q_{amb}(T) = Q_{amb}^N$.
		\end{assumption}
		Under Assumption \ref{AsumptrProsNoCN}, the path of the temperature of the external storage, $\Q^{a}$ satisfies
		\begin{align}
			\label{pathES}
			\mathrm{d}\Q^{a}(t) &= g(z_{\ell},\Q^{a}(t),a)\mathrm{d}t, \quad \Q^a(t_n) = q_j,~ Q_{amb}(t_n) = Q_{amb}^n.
		\end{align}
		Let $\Q^{a,n+1}_{j(\ell,n)}$ denote an approximation of $\Q^{a}(t_{n+1})$. Applying Euler's approximation to Equation \eqref{pathES}, we obtain
		\begin{equation}
			\label{dSLT}
			\Q^{a,n+1}_{j(\ell,n)}:= \Big(1-\frac{A\gamma \Delta t}{m_Qc_P}\Big)q_j + \frac{\Delta t}{m_Qc_P}[(a-1)(\muR^n+\zR_\ell) + A\gamma Q_{amb}^n].
		\end{equation}
		\subsection{State-dependent control contraints}
		Recall that $a$ must satisfy the constraint $a \in \mathcal{K}_a(t,\zR_{\ell},q_j)$. Hence to prevent values of $\Q^{a,n+1}_{j(\ell,n)}$ from lying outside the domain $[q_{min},q_{max}]$, we reformulate $\mathcal{K}_a(t,\zR,q)$ to adapt to the discrete-time setting
		\begin{equation}
			\label{feassetapprox}
			\mathcal{K}^n_d(\zR_{\ell},q_j) = \{a\in \mathcal{K}_a(t,\zR_{\ell},q_j) \quad | \quad \Q^{a,n+1}_{j(\ell,n)}\in [q_{min},q_{max}]\}.
		\end{equation}
		In the following, we seek to fully describe the set $\mathcal{K}^n_d(\zR_{\ell},q_j)$. First, from the requirement $\Q^{a,n+1}_{j(\ell,n)}\in [q_{min},q_{max}]$, we derive the lower and upper bounds for the control $a$ in the following lemma.
		\begin{lemma}
			\label{controlineq}
			Let $Z(t_n)=\zR=\zR_\ell$ and $\Q^{a}(t_n)=q=q_j$. Then the condition $\Q^{a,n+1}_{j(\ell,n)}\in [q_{min},q_{max}]$ is fulfilled if the control $a$ satisfies the relation
			\begin{equation}
				\underline{\chi}(n,\zR,q)\le a \le \overline{\chi}(n,\zR,q),
			\end{equation}
			\begin{align*}
				\underline{\chi}(n,\zR,q) &= 1 - \frac{m_Qc_P(q-q_{min}) - A\gamma\Delta t(q-Q_{amb}^n)}{|\muR^n+\zR| \Delta t}, \\
				\overline{\chi}(n,\zR,q) &= 1 + \frac{m_Qc_P(q_{max}-q) + A\gamma\Delta t(q-Q_{amb}^n)}{|\muR^n+\zR| \Delta t}.
			\end{align*}
			\end{lemma}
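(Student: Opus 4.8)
The plan is to use the fact that the Euler update \eqref{dSLT} is \emph{affine} in the control $a$, so that the membership $\Q^{a,n+1}_{j(\ell,n)}\in[q_{min},q_{max}]$ is nothing but a pair of linear inequalities in $a$ that can be inverted in closed form. Writing $r:=\muR^n+\zR$ for the frozen residual demand and separating the $a$-independent part, I would first rewrite \eqref{dSLT} as
\begin{equation*}
\Q^{a,n+1}_{j(\ell,n)} = q - \frac{A\gamma\Delta t}{m_Qc_P}(q-Q_{amb}^n) + \frac{\Delta t}{m_Qc_P}(a-1)\,r ,
\end{equation*}
which exhibits the arrival temperature as a constant plus the single linear term $\frac{\Delta t}{m_Qc_P}(a-1)r$.

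With this representation, I would impose the two one-sided requirements $\Q^{a,n+1}_{j(\ell,n)}\ge q_{min}$ and $\Q^{a,n+1}_{j(\ell,n)}\le q_{max}$ in turn. Moving the constant to the right-hand side and multiplying by $m_Qc_P/\Delta t$ turns each into a bound on $(a-1)r$; grouping terms reproduces exactly the two numerators $m_Qc_P(q-q_{min})-A\gamma\Delta t(q-Q_{amb}^n)$ and $m_Qc_P(q_{max}-q)+A\gamma\Delta t(q-Q_{amb}^n)$ appearing in $\underline{\chi}$ and $\overline{\chi}$. It then remains only to divide by $r$ and add $1$ to isolate $a$, which yields the two-sided estimate of Lemma~\ref{controlineq}.

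The step I expect to be the crux is precisely this final division, because $r=\muR^n+\zR$ does not have a fixed sign: by the convention of \eqref{residdemand} it is positive in the unsatisfied-demand regime $\zR\ge-\muR^n$ and negative in the overproduction regime $\zR<-\muR^n$. Dividing an inequality by a negative number reverses it, so the two box constraints do not map to a lower and an upper bound in a fixed order; this is exactly why the denominator must be written as $|\muR^n+\zR|$. I would therefore split the argument along $\mathrm{sign}(r)$: in the regime $r>0$ the constraint $\Q\ge q_{min}$ gives $\underline{\chi}$ and $\Q\le q_{max}$ gives $\overline{\chi}$ directly, while in the regime $r<0$ one must re-identify which box constraint produces which bound after the reversal and check consistency with the admissible set $\mathcal{K}_a(t,\zR,q)$. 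Finally, the degenerate case $r=0$ has to be excluded and treated separately: there the update is independent of $a$ and reduces to $\Q^{a,n+1}_{j(\ell,n)}=q-\frac{A\gamma\Delta t}{m_Qc_P}(q-Q_{amb}^n)$, so feasibility imposes no restriction on the control and the formulas for $\underline{\chi},\overline{\chi}$ (which divide by $|\muR^n+\zR|$) do not apply.
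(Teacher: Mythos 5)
Your proposal takes the same route as the paper's own proof: Equation \eqref{dSLT} is affine in $a$, and the two one--sided constraints $\Q^{a,n+1}_{j(\ell,n)}\ge q_{min}$ and $\Q^{a,n+1}_{j(\ell,n)}\le q_{max}$ are simply solved for $a$. The extra care you announce at the division step is not cosmetic, however: the paper's proof divides by $|\muR^n+\zR_\ell|$ without splitting on the sign of $r=\muR^n+\zR$, which is only legitimate for $r>0$. For $r<0$ the inequalities reverse and, exactly as you anticipate, the two numerators trade places: the constraint $\Q^{a,n+1}_{j(\ell,n)}\le q_{max}$ then produces the \emph{lower} bound $1-\frac{m_Qc_P(q_{max}-q)+A\gamma\Delta t(q-Q_{amb}^n)}{|\muR^n+\zR|\Delta t}$ (which is precisely the quantity $x^n_2$ used later in the description of $\mathcal{K}^n_d$), not the $\underline{\chi}$ of the lemma. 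A concrete check: for $q=q_{max}$, $\gamma=0$ and $r<0$, feasibility forces $a\ge 1$, while $\underline{\chi}=1-\frac{m_Qc_P(q_{max}-q_{min})}{|r|\Delta t}<1$, so the stated relation is not sufficient in the overproduction regime. Your planned case split on $\mathrm{sign}(r)$ is therefore needed for the statement to hold as written, and your observation that $r=0$ must be excluded (the update is then independent of $a$ and the formulas are undefined) addresses a second point the paper's proof passes over silently.
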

			\begin{proof}
				Suppose $\Q^{a,n+1}_{j(\ell,n)} \ge q_{min}$. Then from Equation \eqref{dSLT}, solving for $a$ yields $a\ge 1 -\frac{m_Qc_P}{|\muR^n+\zR_\ell|\Delta t}(q_j-q_{min}) + \frac{A\gamma}{|\muR^n+\zR_\ell|}(q_j-Q^n_{amb})=: \underline{\chi}(\muR^n,\zR,q)$. Similarly $\Q^{a,n+1}_{j(\ell,n)} \le q_{max}$ implies $a \le 1 + \frac{m_Qc_P}{|\muR^n+\zR_\ell|\Delta t}(q_{max}-q_j) + \frac{A\gamma}{|\muR^n+\zR_\ell|}(q_j-Q^n_{amb})=: \overline{\chi}(\muR^n,\zR,q)$.
			\end{proof}
			Let $q^n_d(\zR)$ be the temperature level such that in the event of an unsatisfied demand, a subsequent discharge of ES with maximum rate i.e., $a=0$, will drive it empty in the next time step. Similarly, let $q^n_u(\zR)$ be the temperature level such that for an overproduction, the next charge of ES with maximum rate, yields a full storage at the next time step.\newline
			Below, we derive $q^n_d(z)$ and $q^n_u(z)$ which is useful to adequately describe $\mathcal{K}^n_d$. \newline 
			Now, setting $a=0$ in Equation \eqref{pathES} together with assuming $\Q^a(t_n)=q^n_d(\zR)$ and $\Q^a(t)=q_{min}$, we obtain
			\begin{equation*}
				q^n_d(\zR) = \frac{m_Qc_Pq_{min} - \Delta t(A\gamma Q_{amb}^n-\muR^n-\zR)}{m_Qc_P-A\gamma\Delta t}.
			\end{equation*}
			In the following, we discuss the formulation of $q^n_u(\zR)$. We start by introducing the ``compensation charging rate", $r^*(q)$ which is the residual demand such that when injected in ES, its temperature remains unchanged, i.e. $\mathrm{d}\Q^a(t)=0$. In order to derive $r^*(q)$, we substitute $a=0$ in Equation \eqref{pathES} to obtain $r^*(q) = -A\gamma (q-Q_{amb}^n)$. Indeed, considering $r^*(q)$ allows us to distinguish between ``strong" and ``small" overproduction.\newline 
			We assume that for a ``strong" overproduction, i.e., $\muR^n+\zR\le r^*(q)$, no more heat can be charged in ES and thus set $q^n_u = q_{max}$. However, for a ``small" overproduction, i.e., $\muR^n+\zR > r^*(q)$, the residual demand $\muR^n+\zR$ can be charged in ES without increasing the temperature level in ES. In this case, we set $a=0$ in Equation \eqref{pathES} together with assuming $\Q^a(t_n)=q^n_d$ and $\Q^a(t)=q_{max}$, to obtain
			\begin{equation*}
				\hat{q}^n(\zR) = \frac{m_Qc_Pq_{max}-\Delta t(A\gamma Q_{amb}^n-\muR^n-\zR)}{m_Qc_P-A\gamma\Delta t}.
			\end{equation*}
			Therefore the temperature $q^n_u$ is given by
			\[q^n_u(\zR) =  \begin{cases}
				\begin{array}{ll}
					q_{max}, & \muR^n+\zR\le r^*(q), \\
					\hat{q}^n(\zR), & \muR^n+\zR > r^*(q).
				\end{array} 
			\end{cases}\]
	Hence, it follows that the state-dependent set of feasible controls $\mathcal{K}_d(\zR,q)$ is given by
			\[ \mathcal{K}^n_d(\zR,q) =  \begin{cases}
				\begin{array}{ll}
					[0,1], & q\ge q^n_d(\zR), \quad \zR \ge - \muR^n \\
					[x^n_1, 1], & q<q^n_d(\zR), \quad \zR \ge - \muR^n \\
					[0,1], & q < q^n_u(\zR),\quad \zR < - \muR^n \\
					[x^n_2,1], & q \ge q^n_u(\zR),\quad \zR < - \muR^n, 
				\end{array} 
			\end{cases}\] 
			where $x^n_1(\zR,q) = \max \Big(0,1-\frac{m_Qc_P}{(\muR^n+\zR)\Delta t}(q-q_{min})+\frac{A\gamma}{(\muR^n+\zR)}(q-Q_{amb}^n)\Big)$\newline
			and $\quad x^n_2(\zR,q) = \max \Big(1+\frac{m_Qc_P}{(\muR^n+\zR)\Delta t}(q_{max}-q)+\frac{A\gamma}{(\muR^n+\zR)}(q-Q_{amb}^n),0\Big)$.
			
			\begin{figure}[ht]			
				\centering
				\vspace{-0.5cm}
				\includegraphics[width=1\textwidth]{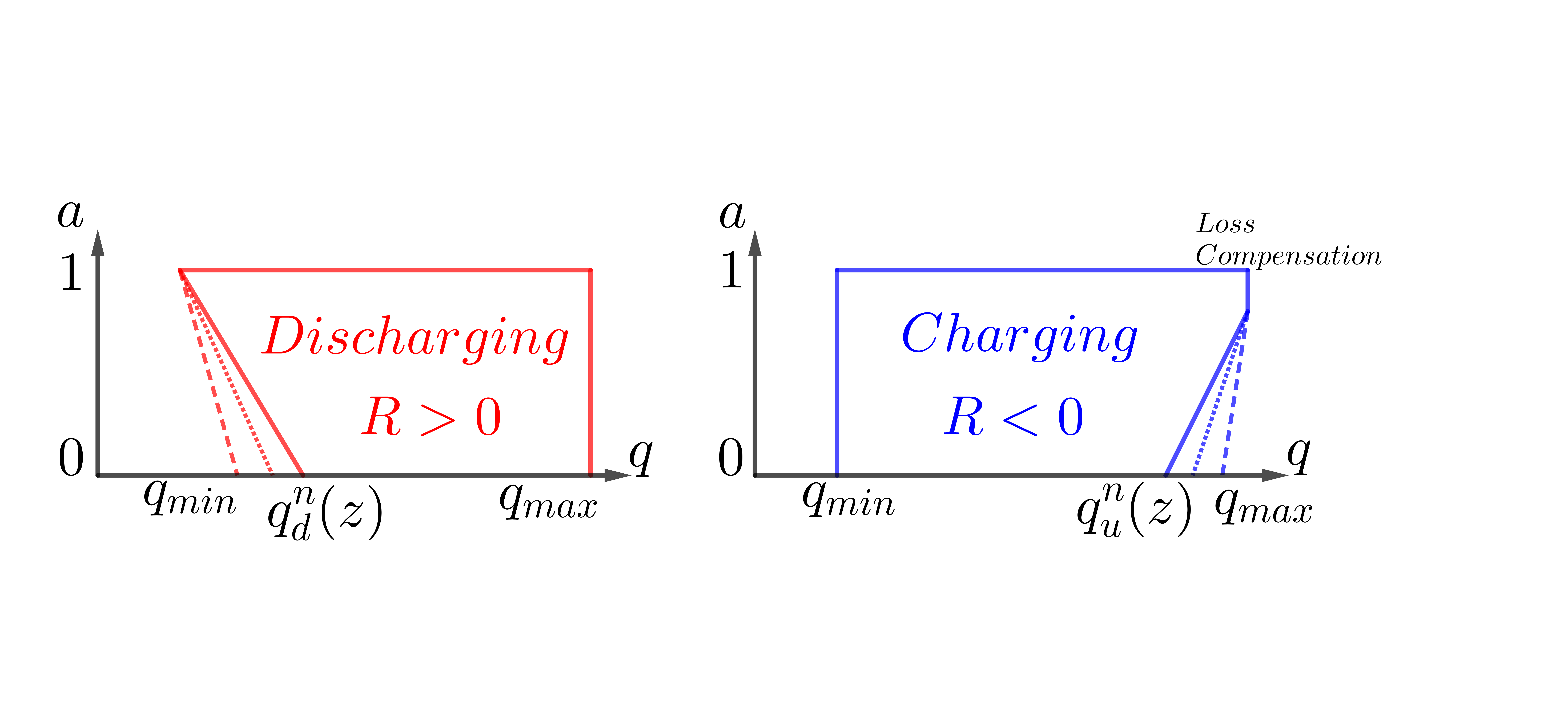}
				\vspace{-1.5cm}
				\caption{Set of feasible controls $\mathcal{K}^n_d$ with unsatisfied demand $z=z_1,z_{max},\frac{z_{max}}{2}$ (Left) for  and overproduction $z=z_2,z_{min},\frac{z_{min}}{2}$ (Right)  for a weak ES insultation at a time $t_n$.}
				\label{Nfeassets}
			\end{figure}
			
			At a time $t_n$, we observe that on the left side of Figure \ref{Nfeassets}, for $z=\frac{z_{max}}{2}$, $q^n_d$ is closer to $q_{min}$ as compared to $z=z_{max}$. Similarly at time $t_n$, we note that on the right side, $z=\frac{z_{min}}{2}$ moves $q^n_u$ closer to $q_{max}$ as compared to $z=z_{min}$. \newline
			
			In the following theorem, using the DPP, we derive the approximate optimal control and the terminal value problem. 
			\begin{theorem}
				\label{derivDS}
				Fix $(t_n,\zR,q)$ and assume Assumption \ref{AsumptrProsNoCN} holds.  Then from the DPP, we derive the approximation of the optimal control, $\alpha^{*n}_{\ell, j}$ given as
				\begin{equation}
					\label{doptcont}
					\alpha^{*n}_{\ell, j} = \argmin_{a\in \mathcal{K}^n_d(\zR,q)}\Big\{\int_{t_n}^{t_{n+1}}e^{-\delta (t-t_n)}\Psi(t,\zR_\ell,a)\mathrm{d}t+e^{-\delta \Delta t}V(t_{n+1},\zR,\Q^{a,n+1}_{j(\ell,n)})\Big\}.
				\end{equation}
				Moreover, setting $U(t,\zR) = V(t,\zR,\Q^{\alpha^*})$, we obtain the terminal value problem
				\begin{align}
					\label{dHJB}
					\frac{\partial U}{\partial t}(t,\zR) + \mathcal{L}U(t,\zR) + \Psi(r,\alpha^*) &= 0, \qquad \text{on~} [t_n,t_{n+1})\times \overline{\mathcal{X}} \\
					U(t_{n+1},\zR) &= V(t_{n+1},\zR,\Q^{\alpha^*,n+1}_{j(\ell,n)}), 
				\end{align}
				where $\mathcal{L}U=f(\zR)\frac{\partial U}{\partial \zR} + \frac{\sigmaR^2}{2}\frac{\partial^2 U}{\partial^2\zR^2}-\delta U$.
			\end{theorem}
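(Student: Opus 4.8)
The plan is to specialise the DPP \eqref{dpp} to the grid by taking $t=t_n$ and $h=\Delta t$, and then to use Assumption \ref{AsumptrProsNoCN} to collapse the stochastic optimisation over the subinterval $[t_n,t_{n+1})$ into a finite-dimensional minimisation. First I would write
\[
V(t_n,\zR,q)=\inf_{\alpha\in\mathcal{A}}\mathbb{E}_{t_n,(\zR,q)}\Big[\int_{t_n}^{t_{n+1}}e^{-\delta(s-t_n)}\Psi(s,\ZR(s),\alpha(s))\,\mathrm{d}s+e^{-\delta\Delta t}V(t_{n+1},\ZR(t_{n+1}),Q^\alpha(t_{n+1}))\Big].
\]
Under Assumption \ref{AsumptrProsNoCN} the state is frozen at $\ZR(t)=\zR_\ell$ and, since admissible controls are of Markov type, the control is held at the constant value $a=\alpha^n_{\ell,j}$ on the subinterval; hence the running-cost integrand is deterministic and the expectation of the integral reduces to $\int_{t_n}^{t_{n+1}}e^{-\delta(t-t_n)}\Psi(t,\zR_\ell,a)\,\mathrm{d}t$. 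Simultaneously, with $\zR$ frozen and $a$ constant the temperature obeys the deterministic ODE \eqref{pathES}, whose Euler step \eqref{dSLT} gives $Q^\alpha(t_{n+1})\approx\Q^{a,n+1}_{j(\ell,n)}$. The only remaining degree of freedom is the constant $a$, constrained to the discrete feasible set $\mathcal{K}^n_d(\zR,q)$, so the infimum over $\mathcal{A}$ becomes a minimisation over $a\in\mathcal{K}^n_d(\zR,q)$; this is exactly the $\argmin$ characterisation \eqref{doptcont} of $\alpha^{*n}_{\ell,j}$.

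For the second assertion I would ``release'' the stochastic state, substitute the minimiser $\alpha^{*n}_{\ell,j}$ back into the DPP to remove the infimum, and read the resulting identity as a linear (uncontrolled) stochastic representation of $V$ along the optimally controlled trajectory over $[t_n,t_{n+1})$. The step doing the real work is the Feynman--Kac argument: setting $U(t,\zR):=V(t,\zR,\Q^{\alpha^*}(t))$ and applying It\^o's formula to $V(t,\ZR(t),\Q^{\alpha^*}(t))$, the $\zR$-diffusion produces the generator $f(\zR)\partial_\zR+\tfrac{\sigmaR^2}{2}\partial_{\zR\zR}$, the discount factor gives $-\delta$, and the running term gives $\Psi$. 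The key observation is that the temperature carries no Brownian increment and, on this subinterval, $\Q^{\alpha^*}(t)$ is a deterministic function of $t$, so the drift contribution $g\,\partial_q V$ is absorbed into $\partial_t U$ via the chain rule $\partial_t U=\partial_t V+g\,\partial_q V$. This is precisely why the generator $\mathcal{L}$ in \eqref{dHJB} carries no $q$-derivative. Requiring the drift of the discounted process to vanish then yields $\partial_t U+\mathcal{L}U+\Psi=0$, and the terminal term of the DPP supplies $U(t_{n+1},\zR)=V(t_{n+1},\zR,\Q^{\alpha^*,n+1}_{j(\ell,n)})$.

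I expect the main obstacle to be this Feynman--Kac step, specifically the justification that the $q$-drift may be folded into the time derivative. This hinges on interpreting $U$ as the value \emph{along} the (deterministic, on the subinterval) temperature path $\Q^{\alpha^*}$ rather than as a free function of an independent $q$-variable; one must also reconcile that the running cost enters the PDE with the released state $\zR$ while it entered \eqref{doptcont} with the frozen $\zR_\ell$, the two coinciding under Assumption \ref{AsumptrProsNoCN}. A secondary point is regularity: a fully rigorous representation needs sufficient smoothness of $V$ (or a viscosity-solution reading), but at the level of a formal scheme derivation it is enough to assume $V$ is $C^{1,2}$ in $(t,\zR)$ so that It\^o's formula applies.
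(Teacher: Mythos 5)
Your proposal is correct and follows essentially the same route as the paper's proof: freeze the state and control on $[t_n,t_{n+1})$ to reduce the DPP infimum to a finite minimisation over $a\in\mathcal{K}^n_d$ with the Euler arrival point $\Q^{a,n+1}_{j(\ell,n)}$, then release $\ZR$, insert the minimiser, and apply Feynman--Kac to get the terminal value problem. Your added remark that the $q$-drift is absorbed into $\partial_t U$ via the chain rule along the deterministic temperature path is a correct elaboration of a step the paper leaves implicit, not a different argument.
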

			\begin{proof}
				See Appendix \ref{proofderivDS}.
			\end{proof}
			Let $\Lambda$ denote the discretization operator of the differential operator $\mathcal{L}$ and denote $\theta_{\ell} = -\kappaR \zR_{\ell}$. Applying the upwind discretization for the convection term $\frac{\partial V}{\partial \zR}$ and the central second-order finite difference for the diffusion term $\frac{\partial^2 V}{\partial \zR^2}$, we obtain  
			\[ \Lambda V^n_{\ell,j} = \frac{\sigmaR^2}{2}\frac{V^n_{\ell - 1,j} -2V^n_{\ell,j}+V^n_{\ell + 1,j}}{(\Delta \zR)^2}-\delta V^n_{\ell,j} + \left\{
			\begin{array}{ll}
				\theta_{\ell}\frac{V^n_{\ell,j} - V^n_{\ell-1,j}}{\Delta \zR}, & \theta_{\ell}\ge 0, \\\\
				\theta_{\ell}\frac{V^n_{\ell+1,j} - V^n_{\ell,j}}{\Delta \zR}, & \theta_{\ell}< 0,
			\end{array}
			\right. \]	
			\begin{equation}
				\label{opapprox}
				\hspace{-3cm}	= \mathcal{H}_{\ell}V^n_{\ell + 1,j}-\mathcal{F}_{\ell}V^n_{\ell,j}+\mathcal{D}_{\ell}V^n_{\ell - 1,j},
			\end{equation}
			where the expressions for $\mathcal{D}_{\ell}$, $\mathcal{F}_{\ell}$ and $\mathcal{H}_{\ell}$ are derived subsequently depending on the sign of $\theta_\ell$.\newline
			For $\theta_{\ell} \ge 0$, we obtain  
			\begin{equation}
				\label{positive_theta}
				\mathcal{D}_{\ell} = \frac{\sigmaR^2}{2(\Delta \zR)^2}-\frac{\theta_{\ell}}{\Delta \zR}, \quad \mathcal{H}_{\ell} = \frac{\sigmaR^2}{2(\Delta \zR)^2} \quad \text{and} \quad \mathcal{F}_{\ell} = \mathcal{D}_{\ell}+\mathcal{H}_{\ell}+\delta.
			\end{equation}
			Similarly for $\theta_{\ell} < 0$ we obtain
			\begin{equation}
				\label{negative_theta}
				\mathcal{D}_{\ell} = \frac{\sigmaR^2}{2(\Delta \zR)^2}, \quad \mathcal{H}_{\ell} = \frac{\sigmaR^2}{2(\Delta \zR)^2}+\frac{\theta_{\ell}}{\Delta \zR} \quad \text{and} \quad \mathcal{F}_{\ell} = \mathcal{D}_{\ell}+\mathcal{H}_{\ell}+\delta.
			\end{equation}
			where $\ell = 1,\ldots,N_{\zR}-1$ ~\text{and}~ $n = 0,\ldots,N_t-1$.\newline
			In the proposition below, we derive an upper bound for $\Delta \zR$, hence defining a positivity condition for the coefficients $\mathcal{D}_{\ell}, \mathcal{F}_{\ell}$ and $\mathcal{H}_{\ell}$.
			\begin{proposition}
				\label{positcond}
				For $\ell = 1,\ldots,N_{\zR}-1$ ~\text{and}~ $n = 0,\ldots,N_t-1$, the coefficients $\mathcal{D}_{\ell}, \mathcal{F}_{\ell}$ and $\mathcal{H}_{\ell}$ are positive if
				\begin{equation}
					\label{positivity_condition}
					\Delta \zR \le \frac{\sigmaR \sqrt{2\kappaR}}{6 \kappaR}.
				\end{equation}
			\end{proposition}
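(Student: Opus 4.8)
The plan is to reduce the positivity of all three coefficients to a single scalar inequality on $\Delta\zR$ and then to bound the worst offending grid node. First I would dispose of $\mathcal{F}_{\ell}$: in both sign regimes \eqref{positive_theta} and \eqref{negative_theta} it is defined as $\mathcal{F}_{\ell} = \mathcal{D}_{\ell} + \mathcal{H}_{\ell} + \delta$ with $\delta \ge 0$, so $\mathcal{F}_{\ell} > 0$ follows automatically once $\mathcal{D}_{\ell} > 0$ and $\mathcal{H}_{\ell} > 0$ are established. Hence only the latter two coefficients require attention.

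Second, I would treat the two cases in \eqref{positive_theta} and \eqref{negative_theta} separately. When $\theta_{\ell} \ge 0$, the expression $\mathcal{H}_{\ell} = \frac{\sigmaR^2}{2(\Delta\zR)^2}$ is manifestly positive, so the only condition to verify is $\mathcal{D}_{\ell} = \frac{\sigmaR^2}{2(\Delta\zR)^2} - \frac{\theta_{\ell}}{\Delta\zR} > 0$; multiplying through by $(\Delta\zR)^2 > 0$ shows this is equivalent to $\Delta\zR < \frac{\sigmaR^2}{2\theta_{\ell}}$. By symmetry, when $\theta_{\ell} < 0$ the coefficient $\mathcal{D}_{\ell}$ is manifestly positive and the requirement $\mathcal{H}_{\ell} > 0$ reduces to $\Delta\zR < \frac{\sigmaR^2}{2|\theta_{\ell}|}$. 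Both regimes therefore collapse to the single uniform requirement $\Delta\zR < \frac{\sigmaR^2}{2|\theta_{\ell}|}$.

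Third, I would make this uniform over $\ell$ by bounding $|\theta_{\ell}| = \kappaR|\zR_{\ell}|$. Since the index runs only over the interior nodes $\ell = 1,\ldots,N_{\zR}-1$, the largest $|\zR_{\ell}|$ is attained at the nodes adjacent to the truncation boundary and equals $\zR_{max} - \Delta\zR$, which is strictly below $\zR_{max} = c_\epsilon \frac{\sigmaR}{\sqrt{2\kappaR}}$. Using the bound $|\theta_{\ell}| \le \kappaR c_\epsilon \frac{\sigmaR}{\sqrt{2\kappaR}}$, the condition $\Delta\zR < \frac{\sigmaR^2}{2|\theta_{\ell}|}$ is implied by $\Delta\zR \le \frac{\sigmaR\sqrt{2\kappaR}}{2 c_\epsilon \kappaR}$, and setting $c_\epsilon = 3$ reproduces exactly \eqref{positivity_condition}.

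The only delicate step is the passage from the strict inequalities $\Delta\zR < \frac{\sigmaR^2}{2|\theta_{\ell}|}$ to the non-strict bound \eqref{positivity_condition}. This is legitimate precisely because the boundary nodes $\zR_0 = \zR_{min}$ and $\zR_{N_{\zR}} = \zR_{max}$ lie outside the range $\ell = 1,\ldots,N_{\zR}-1$; consequently $\max_{\ell}|\zR_{\ell}| = \zR_{max} - \Delta\zR < \zR_{max}$, the worst-case estimate is never attained with equality, and $\Delta\zR \le \frac{\sigmaR\sqrt{2\kappaR}}{6\kappaR}$ already forces the strict inequalities needed for positivity. I would also remark that the factor $6$ corresponds to the choice $c_\epsilon = 3$ in the $c_\epsilon\sigma$-truncation rule; for a larger $c_\epsilon$ the identical argument yields the correspondingly smaller admissible step $\frac{\sigmaR\sqrt{2\kappaR}}{2 c_\epsilon \kappaR}$.
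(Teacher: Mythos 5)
Your proof is correct and follows essentially the same route as the paper's: dispose of $\mathcal{F}_{\ell}$ via $\mathcal{F}_{\ell}=\mathcal{D}_{\ell}+\mathcal{H}_{\ell}+\delta$, split on the sign of $\theta_{\ell}$ so that one of $\mathcal{D}_{\ell},\mathcal{H}_{\ell}$ is trivially nonnegative, and bound the remaining one uniformly using $|\zR_{\ell}|\le c_\epsilon\sigmaR/\sqrt{2\kappaR}$ with $c_\epsilon=3$. Your additional observation that the interior nodes satisfy $|\zR_{\ell}|\le \zR_{max}-\Delta\zR<\zR_{max}$, so that the non-strict step-size bound still yields strictly positive coefficients, is a small refinement the paper omits (it only argues nonnegativity), but it does not change the argument.
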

			\begin{proof}
				See Appendix \ref{proofpositcond}.
			\end{proof}
			Now, for simplicity, we require that the arrival point $\Q^{a,n+1}_{j(\ell,n)}$ always lies between $q_{j-1} = q_j-\Delta q$ and $q_{j+1} = q_j+\Delta q$ adjacent to $q_j$, hence leading to the following inequalities
			\begin{align}
				\label{inequalities}
				q_{min}:=q_0 &\le q_{min}-\Delta_1 \le q_1 := q_{min}+\Delta q, \qquad \Delta_1 \le 0, \nonumber\\
				q_{j-1} &\le q_j - \Delta_2 ~~~ \le q_{j+1}, \qquad j=1,\ldots,N_q-1, \\
				q_{max}-\Delta q:=q_{N_q-1} &\le q_{max} - \Delta_3 \le q_{N_q}:=q_{max}, \qquad \quad~~~ \Delta_3 \ge 0, \nonumber
			\end{align}
			where: 
			\begin{align*}
				\Delta_1 &= \frac{\Delta t}{m_Qc_P}(1-a)(\muR^n+\zR_\ell) + \frac{A\gamma\Delta t}{m_Qc_P}(q_{min}-Q_{amb}^n) \\
				\Delta_2 &= \frac{\Delta t}{m_Qc_P}(1-a)(\muR^n+\zR_\ell) + \frac{A\gamma\Delta t}{m_Qc_P}(q_j-Q_{amb}^n) \\
				\Delta_3 &= \frac{\Delta t}{m_Qc_P}(1-a)(\muR^n+\zR_\ell) + \frac{A\gamma\Delta t}{m_Qc_P}(q_{max}-Q_{amb}^n).
			\end{align*}
			In the lemma below, we formulate the Courant-Friedrichs-Lewy (CFL) (see \cite{courant1928partiellen}) condition which relates the time step $\Delta t$ to the step size $\Delta q$. It ensures that the arrival point $\Q^{a,n+1}_{j(\ell,n)}$ always lies between $q_{j-1} = q_j-\Delta q$ and $q_{j+1} = q_j+\Delta q$ adjacent to $q_j$.
			\begin{lemma}
				Let $\muR_{min} = \displaystyle \min_{t\in [0,T]}\muR(t)$, $\muR_{max} = \displaystyle \max_{t\in [0,T]}\muR(t)$ and denote $Q^{min}_{amb} = \displaystyle \min_{n\in \mathcal{N}}Q_{amb}^n$. If Equation \eqref{inequalities} is satisfied, then
				\begin{equation}
					\label{CFL}
					\Delta q \ge \frac{\Delta t}{m_Qc_P}(\max(|\muR_{min}+\zR_{min}|,\muR_{max}+\zR_{max})-A\gamma (q_{max}-Q^{min}_{amb})).
				\end{equation}
			\end{lemma}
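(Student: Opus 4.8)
The plan is to read off the bound directly from the middle line of Equation~\eqref{inequalities}. Recall that by Equation~\eqref{dSLT} the Euler arrival point is $\Q^{a,n+1}_{j(\ell,n)} = q_j - \Delta_2$, so the requirement $q_{j-1}\le \Q^{a,n+1}_{j(\ell,n)}\le q_{j+1}$ with $q_{j\pm 1}=q_j\pm\Delta q$ is exactly $|\Delta_2|\le \Delta q$. First I would observe that among the three lines of \eqref{inequalities} the interior one is the binding constraint: the state-dependent set $\mathcal{K}^n_d$ is built precisely so that at $q=q_{min}$ only charging and at $q=q_{max}$ only loss-compensating charging is admissible, which is what forces $\Delta_1\le 0$ and $\Delta_3\ge 0$ in the outer lines. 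Hence it suffices to turn $|\Delta_2|\le\Delta q$ into an explicit lower bound for $\Delta q$ that is uniform in $(n,\ell,j)$ and in the admissible control $a$.

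Next I would substitute $\Delta_2=\frac{\Delta t}{m_Qc_P}\big[(1-a)(\muR^n+\zR_\ell)+A\gamma(q_j-Q_{amb}^n)\big]$ and maximise $|\Delta_2|$ over the admissible configurations. The magnitude is largest at the extreme control $a=0$, i.e. when the whole residual demand is routed through the storage at the maximal charging/discharging rate $1-a=1$; since the demand is confined to $\muR_{min}+\zR_{min}\le \muR^n+\zR_\ell\le \muR_{max}+\zR_{max}$, one has $|\muR^n+\zR_\ell|\le \max(|\muR_{min}+\zR_{min}|,\muR_{max}+\zR_{max})$, and the loss term satisfies $0\le A\gamma(q_j-Q_{amb}^n)\le A\gamma(q_{max}-Q^{min}_{amb})$ because $q_j\le q_{max}$ and $Q_{amb}^n\ge Q^{min}_{amb}$.

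The decisive case is overproduction, $\muR^n+\zR_\ell<0$: here the charging term raises the temperature while the heat loss opposes it, so that at $a=0$ one gets $-\Delta_2=\frac{\Delta t}{m_Qc_P}\big[|\muR^n+\zR_\ell|-A\gamma(q_j-Q_{amb}^n)\big]$. Inserting the two bounds above gives $-\Delta_2\le \frac{\Delta t}{m_Qc_P}\big(\max(|\muR_{min}+\zR_{min}|,\muR_{max}+\zR_{max})-A\gamma(q_{max}-Q^{min}_{amb})\big)$, and the requirement that $\Delta q$ dominate this expression is precisely Equation~\eqref{CFL}. For unsatisfied demand the two terms reinforce rather than compete, giving a condition with a $+$ sign which is weaker and therefore not stated.

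The step I expect to be the real obstacle is the sign bookkeeping together with the interplay with the feasibility set in this last paragraph: one has to argue that it is the charging branch $-\Delta_2\le\Delta q$ that produces the stated bound, so that the heat loss appears as a subtracted compensation, and one has to justify evaluating the charging rate at its extreme $a=0$ and the loss term at its extreme $q_{max}-Q^{min}_{amb}$ simultaneously, even though $a=0$ is not admissible for a nearly full storage. Making this a genuine uniform bound (rather than a representative worst-case design rule) would require tracking how $\mathcal{K}^n_d$ couples the feasible range of $a$ to $q_j$, which is the only nontrivial point; the remaining manipulations are the elementary rearrangements already used in Lemma~\ref{controlineq}.
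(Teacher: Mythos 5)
Your proposal is correct and follows essentially the same route as the paper: reduce \eqref{inequalities} to the uniform requirement $\Delta q \ge \frac{\Delta t}{m_Qc_P}\left|(1-a)(\muR^n+\zR_\ell)+A\gamma(q_j-Q_{amb}^n)\right|$, then bound the absolute value from below by separating the demand term from the loss term (the paper does this via the reverse triangle inequality, you via the sign analysis in the overproduction branch, which is the same estimate) and evaluate at the extremes $a=0$, $|\muR^n+\zR_\ell|\le\max(|\muR_{min}+\zR_{min}|,\muR_{max}+\zR_{max})$ and $q_{max}-Q^{min}_{amb}$. Your closing caveat about the admissibility of $a=0$ near a full storage is a fair point that the paper's two-line proof silently glosses over, so if anything your write-up is the more careful of the two.
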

			\begin{proof}
				The inequalities in Equation \eqref{inequalities} hold if $\Delta q\ge \frac{\Delta t}{m_Qc_P} |(1-a)(\muR^n+\zR_\ell) + A\gamma(q_j-Q_{amb}^n)|, \forall \ell,j$.\newline
				Now, we have $|(1-a)(\muR^n+\zR_\ell) + A\gamma(q_j-Q_{amb}^n)| \ge (1-a)|(\muR^n+\zR_\ell)|-A\gamma |q_j-Q_{amb}^n| \ge (1-a)|(\muR^n+\zR_\ell)|-A\gamma (q_{max}-Q_{amb}^n)~\forall \ell$.
			\end{proof} 
			Since $\Q^{a,n+1}_{j(\ell,n)}$ does not always coincide with a grid point $q_j\in \mathcal{G}_q$, this motivates an interpolation of $V(t_{n+1},\zR_\ell,\Q^{a,n+1}_{j(\ell,n)})$ by function values $V^{n+1}_{\ell,j}$ in the grid points of $\mathcal{G}$. According to \cite{chen2008semi}, it is enough to consider a linear interpolation since it allows to construct a monotone difference scheme. We denote by $V^{n+1}_{j(\ell,n)}$ the interpolation of $V(t_n,\zR_\ell,\Q^{a,n+1}_{j(\ell,n)})$ and construct it in the following proposition.\newline
			\begin{proposition}
				\label{lininterpol}
				Assume the CFL condition \eqref{CFL} and let \newline $b^n_{\ell,j}=\frac{\Delta t}{m_Qc_P\Delta q}\Big[(1-a)(\muR^n+\zR_\ell)+A\gamma(q_j-Q^{min}_{amb})\Big]$. Then the interpolated value $V^{n+1}_{j(\ell,n)}$ is represented by
				\begin{align}
					\label{approx}
					V^{n+1}_{j(\ell,n)} &= B^{(q),n}_{\ell,j}V^{n+1}_{\ell,j}+A^{(q),n}_{\ell,j}V^{n+1}_{\ell,j-1}+C^{(q),n}_{\ell,j}V^{n+1}_{\ell,j+1},
				\end{align} 
				for $\ell=0,\ldots,N_{\zR},~j=1,\ldots,N_{q}-1,~n=0,\ldots,N_t-1$ and where \newline $B^{(q),n}_{\ell,j} = 1-|b^n_{\ell,j}|, \quad A^{(q),n}_{\ell,j} = \frac{b^n_{\ell,j}+|b^n_{\ell,j}|}{2} \quad \text{~and~} \quad C^{(q),n}_{\ell,j} = -\frac{b^n_{\ell,j}-|b^n_{\ell,j}|}{2}$. \newline
				Furthermore, we obtain that for $j=0$ and $j=N_q$,
				\begin{align}
					V^{n+1}_{0(\ell,n)} &= B^{(q),n}_{\ell,0}V^{n+1}_{\ell,0}+C^{(q),n}_{\ell,0}V^{n+1}_{\ell,1}, \nonumber\\
					V^{n+1}_{N_q(\ell,n)} &= B^{(q),n}_{\ell,N_q}V^{n+1}_{\ell,N_q}+A^{(q),n}_{\ell,N_q}V^{n+1}_{\ell,N_q-1}. \nonumber
				\end{align}
			\end{proposition}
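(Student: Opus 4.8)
The plan is to reduce the claim to elementary piecewise-linear interpolation on the $q$-grid, with essentially all of the content hidden in the sign bookkeeping that collapses two interpolation cases into the single stated formula. First I would rewrite the Euler arrival point \eqref{dSLT} as a signed displacement from the base node $q_j$: a direct rearrangement gives $q_j - \Q^{a,n+1}_{j(\ell,n)} = \frac{\Delta t}{m_Qc_P}[(1-a)(\muR^n+\zR_\ell) + A\gamma(q_j - Q_{amb}^n)] = \Delta_2$, i.e. $\Q^{a,n+1}_{j(\ell,n)} = q_j - b^n_{\ell,j}\,\Delta q$ with $b^n_{\ell,j}$ as defined in the statement. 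This identifies $b^n_{\ell,j}$ as the distance, measured in units of $\Delta q$ and with sign indicating direction, between the arrival point and the node $q_j$.

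Next I would invoke the CFL condition \eqref{CFL}, equivalently the bracketing inequalities \eqref{inequalities}, which guarantee $|\Delta_2|\le\Delta q$ so that $\Q^{a,n+1}_{j(\ell,n)}\in[q_{j-1},q_{j+1}]$. Consequently the arrival point is enclosed either by $\{q_{j-1},q_j\}$ (when $b^n_{\ell,j}\ge 0$, the point lying to the left of $q_j$) or by $\{q_j,q_{j+1}\}$ (when $b^n_{\ell,j}<0$). In each case I would write the standard two-point linear interpolation. For $b^n_{\ell,j}\ge 0$ this produces weight $1-b^n_{\ell,j}$ on $V^{n+1}_{\ell,j}$ and weight $b^n_{\ell,j}$ on $V^{n+1}_{\ell,j-1}$, with nothing on $V^{n+1}_{\ell,j+1}$; for $b^n_{\ell,j}<0$ it produces weight $1-|b^n_{\ell,j}|$ on $V^{n+1}_{\ell,j}$ and $|b^n_{\ell,j}|$ on $V^{n+1}_{\ell,j+1}$, with nothing on $V^{n+1}_{\ell,j-1}$.

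The only remaining point is to verify that both cases are subsumed by the single expression in the statement. Using $A^{(q),n}_{\ell,j}=\tfrac12(b^n_{\ell,j}+|b^n_{\ell,j}|)$, which equals $b^n_{\ell,j}$ when $b^n_{\ell,j}\ge 0$ and $0$ otherwise, together with $C^{(q),n}_{\ell,j}=-\tfrac12(b^n_{\ell,j}-|b^n_{\ell,j}|)$, which equals $0$ when $b^n_{\ell,j}\ge 0$ and $|b^n_{\ell,j}|$ otherwise, and $B^{(q),n}_{\ell,j}=1-|b^n_{\ell,j}|$, reproduces both interpolation formulas at once. This is exactly the positive/negative-part device already used to write the upwind operator in \eqref{positive_theta}--\eqref{negative_theta}.

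Finally I would dispose of the boundary indices. At $j=0$ the inequality $\Delta_1\le 0$ in \eqref{inequalities} forces the arrival point into $[q_0,q_1]$ (only charging is admissible there), so only the $\{q_0,q_1\}$ interpolation survives and $A^{(q),n}_{\ell,0}=0$, giving the stated two-term formula; symmetrically, $\Delta_3\ge 0$ confines the $j=N_q$ arrival point to $[q_{N_q-1},q_{N_q}]$ and leaves $C^{(q),n}_{\ell,N_q}=0$. I expect the main obstacle to be purely bookkeeping rather than analytic: carrying the sign of $b^n_{\ell,j}$ correctly through the two cases, and keeping the ambient-temperature conventions consistent between the displacement $\Delta_2$ (which carries $Q_{amb}^n$) and the coefficient $b^n_{\ell,j}$ (stated with $Q^{min}_{amb}$); once these are reconciled, the interpolation itself is routine.
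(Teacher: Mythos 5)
Your proposal is correct and follows essentially the same route as the paper's own proof: identify the arrival point as $q_j - b^n_{\ell,j}\Delta q$, write the two-point linear interpolation separately for $b^n_{\ell,j}\ge 0$ and $b^n_{\ell,j}<0$, and merge the two cases via the positive/negative-part identities for $A^{(q),n}_{\ell,j}$, $B^{(q),n}_{\ell,j}$, $C^{(q),n}_{\ell,j}$. You in fact go slightly further than the paper's appendix by explicitly treating the boundary indices $j=0,N_q$ via the signs of $\Delta_1$ and $\Delta_3$, and by flagging the $Q^{min}_{amb}$ versus $Q_{amb}^n$ inconsistency in the statement of $b^n_{\ell,j}$, which is indeed a typo to be reconciled.
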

			\begin{proof}
				See Appendix \ref{prooflininterpol}.
			\end{proof}
			Discretizing Equation \eqref{dHJB} by a $\theta$-weighted, $\theta \in [0,1]$ implicit finite difference scheme yields
			\begin{equation}
				\frac{V^{n+1}_{\ell,j} - V^n_{\ell,j}}{\Delta t} + \theta \Lambda V^n_{\ell,j} + (1-\theta)\Lambda V^{n+1}_{\ell,j}+\Psi(\zR_\ell,\alpha^*) = 0.
			\end{equation}
			Taking $\theta = 1$, gives a fully implicit scheme
			\begin{align}
				\label{dscheme}
				V^n_{\ell,j} - \Delta t\Big\{\mathcal{H}_{\ell}V^n_{\ell + 1,j}-\mathcal{F}_{\ell}V^n_{\ell,j} + \mathcal{D}_{\ell}V^n_{\ell - 1,j}\Big\} &= V^{n+1}_{j(\ell,n)} + \Delta t\Psi(\zR_{\ell},\alpha^*), \nonumber\\
				\Q^{a,n+1}_{j(\ell,n)}&= \Big(1-\frac{A\gamma \Delta t}{m_Qc_P}\Big)q_j + \frac{\Delta t}{m_Qc_P}((a-1)(\muR^n+\zR_\ell) \nonumber \\
				&\hspace{1cm} + A\gamma Q_{amb}^n), \\
				\alpha^{*n}_{\ell, j} &= \argmin_{a\in \mathcal{K}^n_d(\zR_{\ell},q_j)}\Big\{\int_{t_n}^{t_{n+1}}e^{-\delta (t-t_n)}\Psi(t,\zR_\ell,a)\mathrm{d}t \nonumber \\ 
				&\hspace{1cm} + e^{-\delta \Delta t}V^{n+1}_{j(\ell,n)}\Big\} \nonumber\\
				\mathcal{K}^n_d(\zR_{\ell},q_j) &= \{a\in \mathcal{K}_a(t,\zR_{\ell},q_j) \quad |~ \Q^{a,n+1}_{j(\ell,n)}\in [q_{min},q_{max}]\} \nonumber\\
				V^{N_t}_{\ell,j} &= \Phi(q_j). \nonumber	
			\end{align}
			Set $\Psi^{n+1}_{\ell,j} = \Psi(\zR_\ell,a)$. Let $n$ and $j$ be fixed, and let the optimal strategy $a=\alpha_{\ell, j}^{*n}$ together with the expressions \eqref{CFL} - \eqref{approx} $\forall \ell$ be known.\newline
			Denote the right hand side of the difference equation in \eqref{dscheme} by $\Gamma_{\ell,j}^{n+1}$. We have that $\Gamma_{\ell,j}^{n+1}$ is known for fixed $t_n$ and $q_j$ and $\forall \zR_\ell$.\newline
			Thus we obtain 
			\begin{equation}
				\label{systemequations}
				(1+\Delta_{N_t} \mathcal{F}_\ell)V^n_{\ell,j}-\Delta_{N_t} \mathcal{D}_\ell V^n_{\ell-1,j}- \Delta_{N_t} \mathcal{H}_\ell V^n_{\ell+1,j} = \Gamma^{n+1}_{\ell,j},
			\end{equation}
			where $\Gamma_{\ell,j}^{n+1} = A^{(q),n}_{\ell,j}V^{n+1}_{\ell,j-1}+B^{(q),n}_{\ell,j}V^{n+1}_{\ell,j}+C^{(q),n}_{\ell,j}V^{n+1}_{\ell,j+1}+\Delta_{N_t} \Psi^{n+1}_{\ell,j}$ for $\ell=1,\ldots,N_{\zR}-1,~j=1,\ldots,N_q-1,~n=0,\ldots,N_t-1$.\newline
			Equation \eqref{systemequations} is a system of linear algebraic equations and is useful in the sequel to obtain $V^n_{\ell,j}$ in the interior points.\newline
			Note that at the boundary $q=q_{min},q_{max}$, we do not impose ``stringent" boundary conditions. Hence in the following, we derive conditions at the boundaries $q_{min}$ and $q_{max}$ for which PDE \eqref{dHJB} can be solved numerically with no further requirement in the $q$-direction as in \cite{chen2008semi}. At $q=q_{min}$, we have from the first equation of \eqref{inequalities} that $\Delta_1 \le 0$. Hence substituting $j=0$, for $q=q_{min}$ in $b_{\ell,j}$, it implies that
			\begin{equation}
				\label{q1BC}
				b^n_{\ell,0} = \frac{\Delta t}{m_Qc_P\Delta q}((1-a)(\muR^n+\zR_\ell) + A\gamma (q_{min}-Q_{amb}^n))\le 0.
			\end{equation}
			Similarly at $q=q_{max}$, from the last Equation in \eqref{inequalities}, we have $\Delta_3\ge 0$. This further implies that for $j=N_q+1$,
			\begin{equation}
				\label{q2BC}
				b^n_{\ell,N_q} = \frac{\Delta t}{m_Qc_P\Delta q}((1-a)(\muR^n+\zR_\ell) + A\gamma (q_{max}-Q_{amb}^n))\ge 0.
			\end{equation}
			The signs of $b^n_{\ell,0}$ and $b^n_{\ell,N_q}$ justify the use
			Therefore we solve the PDE \eqref{dHJB} in the $q$-direction with no further requirement except conditions \eqref{q1BC} and \eqref{q2BC}.
			\subsection{Boundary conditions}
			In this section, we formulate boundary conditions at $\zR=\zR_{min}, \zR_{max}$ which are needed due to the truncation of PDE domain from $\mathcal{X}$ to $\overline{\mathcal{X}}$. In order to reduce the effects due to mispecification of the boundary conditions, we proceed as in the literature (see. \cite{chen2008semi,shardin2017partially}). We require that the second order derivative of $V$ vanishes at the boundaries $\zR_{min}$ and $\zR_{max}$, while no further requirements must be made to the value function and its first order derivative. Mathematically, we require that
			\begin{equation}
				\label{rboundcond}
				\frac{\partial^2 V}{\partial \zR^2}(t,\zR_{min},q) = 0, \quad \frac{\partial^2 V}{\partial \zR^2}(t,\zR_{max},q) = 0, \quad (t,q)\in (0,T)\times [q_{min},q_{max}].
			\end{equation}
			\subsection{Matrix formulation}
			\label{matrix formulation}
			From  the boundary conditions defined above together with $\Gamma_{\ell,j}^{n+1}$ known for $n$ and $j$ fixed, we formulate the following system of linear equations for the difference Equation \eqref{dscheme}
			\begin{equation}
				\label{linsys}
				\mathcal{M}\bm{V}^n_j = \bm{\Gamma}_j^{n+1},\quad \text{for~} n=0,\ldots,N_t -1, \quad j=1,\ldots,N_q -1,
			\end{equation}
			where $\bm{V}^n_j = (V^n_{1,j},\ldots,V^n_{N_{\zR}-1,j})^T$, \quad $\bm{\Gamma}_j^{n+1} = (\Gamma_{1,j}^{n+1},\ldots,\Gamma_{N_{\zR}-1,j}^{n+1})^T$ 
			with $\mathcal{M}$ an $(N_{\zR}-1)\times (N_{\zR}-1)$ tridiagonal matrix given as
			\begin{equation}
				\label{matrixM}
				\resizebox{.5\hsize}{!}
				{$\mathcal{M} = 
					\begin{bmatrix}
						a_1 & c_1 & 0 & 0 & \dots & 0 & 0\\
						b_2 & a_2 & c_2 & 0 & \dots & 0 & 0 \\
						0 & b_3 & a_3 & c_3 & \dots & 0 & 0 \\
						\dots & \dots & \dots & \dots & \dots & \dots & \dots \\
						0 & 0 & 0 & \dots & b_{N_{\zR}-2} & a_{N_{\zR}-2} & c_{N_{\zR}-2} \\
						0 & 0 & 0 & \dots & 0 & b_{N_{\zR}-1} & a_{N_{\zR}-1} 
					\end{bmatrix}
					$}
			\end{equation}
			where:
			\begin{align*}
				a_1 &= 1+\Delta t \mathcal{F}_1-2\Delta t \mathcal{D}_1, \qquad \qquad~~~ c_1 = \Delta t(\mathcal{D}_1-\mathcal{H}_1), \\
				a_j &= 1+\Delta t \mathcal{F}_j, \quad b_j = -\Delta t \mathcal{D}_j, \qquad~ c_j = -\Delta t \mathcal{H}_j, \quad j = 2,\cdots,N_{\zR}-1 \\
				a_{N_{\zR}-1} &= 1+\Delta t \mathcal{F}_{N_{\zR}-1}-2\Delta t \mathcal{H}_{N_{\zR}-1} \quad b_{N_{\zR}-1} = -\Delta t(\mathcal{D}_{N_{\zR}-1}-\mathcal{H}_{N_{\zR}-1}). 	
			\end{align*}
			In the follwoing, we derive an approximation of boundary values $V^n_{0,j}, V^n_{N_{\zR},j}, V^n_{l,0}$ and $V^n_{l,N_q}$. Substituting the boundary condition, Equation \eqref{rboundcond} in the first expression of Equation \eqref{dHJB} yields
			\begin{align}
				\label{BC1}
				\frac{\partial V}{\partial t}(t,\zR,q) + \mathcal{L}_1V(t,\zR,q) + \Psi(\zR,\alpha^*) &= 0,
			\end{align}
			where $\mathcal{L}_1V = -\kappaR \zR_{min}\frac{\partial V}{\partial \zR}$. Taking $\ell=0$, we observe by construction that $\theta_0 = -\kappaR \zR_{min}\ge 0$. Hence, forward discretization can be used to approximate $\mathcal{L}_1$. Let $\Lambda_1$ denote this approximation given by
			\begin{equation}
				\label{dBC1}
				\Lambda_1 V^n_{0,j} = \mathcal{H}_0V^n_{1,j} - \mathcal{F}_0V^n_{0,j}, 
			\end{equation}
			where $\mathcal{H}_0 = \frac{\theta_0}{\Delta \zR}$ and $\mathcal{F}_0 = \frac{\theta_0}{\Delta \zR} + \delta$, for $n = 0\cdots,N_t-1$, and $j = 1\cdots,N_q-1$.\newline
			Next substituting Equation \eqref{dBC1} into the first Equation in \eqref{dscheme} gives
			\begin{equation}
				\label{BC11}
				(1+\Delta_{N_t} \mathcal{F}_0)V^n_{0,j} = V^{n+1}_{j(0,n)} + \Delta_{N_t}(\mathcal{H}_0V^n_{1,j} + \Psi(\zR_{min},\alpha^*)).
			\end{equation}
			Recall that $\Gamma_{0,j}^{n+1}$ is known for $n$ and $j$ fixed. Thus from Equation \eqref{BC11}, we derive the following system of linear equations corresponding to the boundary $\zR = \zR_{min}$
			\begin{equation}
				\label{linsys2}
				\mathcal{B}\bm{V}^n_0 = \bm{\Gamma}_0^{n+1} + \Delta_{N_t} \mathcal{H}_0\bm{V}^n_1,\quad \text{for~} n=0,\ldots,N_t -1,
			\end{equation}
			$\bm{V}^n_0 = (V^n_{0,1},\ldots,V^n_{0,N_q-1})^T$, \quad $\bm{\Gamma}_0^{n+1} = (\Gamma_{0,1}^{n+1},\ldots,\Gamma_{0,N_q-1}^{n+1})^T$, \quad $\bm{V}^n_1 = (V^n_{1,1},\ldots,V^n_{1,N_q-1})^T$ 
			with $\mathcal{B}$ an $(N_{\zR}-1)\times (N_{\zR}-1)$ diagonal matrix (see Appendix \ref{MatrixBC}).\newline
			For $\ell=N_{\zR}$, we observe that by construction, $\theta_{N_{\zR}} = -\kappaR \zR_{max}< 0$. Let $\Lambda_2$ denote the approximation of $\mathcal{L}_1$ when using backward discretization. Thus $\Lambda_2$ is given by
			\begin{equation}
				\label{dBC2}
				\Lambda_2 V^n_{N_{\zR},j} = A^n_{N_{\zR}}V^n_{N_{\zR}-1,j} - B^n_{N_{\zR}}V^n_{N_{\zR},j}, 
			\end{equation}
			where $\mathcal{D}_{N_{\zR}} = -\frac{\theta_{N_{\zR}}}{\Delta \zR}$ and $\mathcal{F}_{N_{\zR}} = \delta - \frac{\theta_{N_{\zR}}}{\Delta \zR}$, for $n = 0\cdots,N_t-1$, and $j = 1\cdots,N_q-1$.\newline
			Substituting Equation \eqref{dBC2} into the first Equation in \eqref{dscheme} gives
			\begin{equation}
				\label{BC12}
				(1+\Delta t \mathcal{F}_{N_{\zR}})V^n_{N_{\zR},j} = V^{n+1}_{j(N_{\zR},n)} + \Delta t(\mathcal{D}_{N_{\zR}}V^n_{N_{\zR}-1,j} + \Psi(\zR_{max},\alpha^*)).
			\end{equation}
			Note that $\Gamma_{N_{\zR},j}^{n+1}$ is known for $n$ and $j$ fixed. Thus from Equation \eqref{BC12}, we derive the following system of linear equations corresponding to the boundary $\zR = \zR_{max}$
			\begin{equation}
				\label{linsys3}
				\mathcal{C}\bm{V}^n_{N_{\zR}} = \bm{\Gamma}_{N_{\zR}} + \Delta t \mathcal{D}_{N_{\zR}}\bm{V}^n_{N_{\zR}-1},\quad \text{for~} n=0,\ldots,N_t -1,
			\end{equation}
			$\bm{V}^n_{N_{\zR}} \quad = (V^n_{N_{\zR},1},\ldots,V^n_{N_{\zR},N_q-1})^T$, \quad $\bm{\Gamma}_{N_{\zR}}^{n+1} = (\Gamma_{N_{\zR},1}^{n+1},\ldots,\Gamma_{N_{\zR},N_q-1}^{n+1})^T$, \newline \quad $\bm{V}^n_{N_{\zR}-1} = (V^n_{N_{\zR}-1,1},\ldots,V^n_{N_{\zR}-1,N_q-1})^T$ 
			with $\mathcal{C}$ an $(N_{\zR}-1)\times (N_{\zR}-1)$ diagonal matrix (see Appendix \ref{MatrixBC}).\newline
			Now, substituting $j = 0$ in the first expression in Equation \eqref{dscheme} we get
			\begin{equation}
				\label{BC21}
				V^n_{\ell,0} - \Delta t\Big\{\mathcal{H}_{\ell}V^n_{\ell + 1,0}-\mathcal{F}_{\ell}V^n_{\ell,0} + \mathcal{D}_{\ell}V^n_{\ell - 1,0}\Big\} = V^{n+1}_{0(\ell,n)} + \Delta t\Psi(\zR_{\ell},\alpha^*)
			\end{equation}
			Also note that $\Gamma_{\ell,0}^{n+1}$ is known for $n$ and $\ell$ fixed. Varying the values of $\ell = 1,\cdots,N_{\zR}-1$, we derive the following system of linear equations for the boundary $q = q_{min}$
			\begin{equation}
				\label{linsys4}
				\mathcal{M}\bm{V}^n_0 = \bm{\Gamma}_0^{n+1},\quad \text{for~} n=0,\ldots,N_t -1, \quad \ell=1,\ldots,N_{\zR} -1,
			\end{equation}
			where $\bm{V}^n_0 = (V^n_{1,0},\ldots,V^n_{N_{\zR}-1,0})^T$, \quad $\bm{\Gamma}_0^{n+1} = (\Gamma_{1,0}^{n+1},\ldots,\Gamma_{N_{\zR}-1,0}^{n+1})^T$ with the matrix $\mathcal{M}$ defined in Equation \eqref{matrixM}.\newline
			Similarly, substituting $j = N_q$ in the first expression in Equation \eqref{dscheme} we obtain
			\begin{equation}
				\label{BC22}
				V^n_{\ell,N_q} - \Delta t\Big\{\mathcal{H}_{\ell}V^n_{\ell + 1,N_q}-\mathcal{F}_{\ell}V^n_{\ell,N_q} + \mathcal{D}_{\ell}V^n_{\ell - 1,N_q}\Big\} = V^{n+1}_{N_q(\ell,n)} + \Delta t\Psi(\zR_{\ell},\alpha^*)
			\end{equation}
			Varying the values of $\ell = 1,\cdots,N_{\zR}-1$ and noting that $\Gamma_{\ell,N_q}^{n+1}$ is known for $n$ and $\ell$ fixed, we also derive the following system of linear equations for the boundary $q = q_{max}$
			\begin{equation}
				\label{linsys5}
				\mathcal{M}\bm{V}^n_{N_q} = \bm{\Gamma}_{N_q}^{n+1},\quad \text{for~} n=0,\ldots,N_t -1, \quad \ell=1,\ldots,N_{\zR} -1,
			\end{equation}
			where $\bm{V}^n_{N_q} = (V^n_{1,N_q},\ldots,V^n_{N_{\zR}-1,N_q})^T$, \quad $\bm{\Gamma}_{N_q}^{n+1} = (\Gamma_{1,N_q}^{n+1},\ldots,\Gamma_{N_{\zR}-1,N_q}^{n+1})^T$ with the matrix $\mathcal{M}$ defined in Equation \eqref{matrixM}.\newline
			Finally in order to determine the corner values $V^n_{0,0}, V^n_{0,N_q}, V^n_{N_{\zR},0}$ and $V^n_{N_{\zR},N_q}$, we use previously computed values $V^n_{\ell,j}$ for $\ell = 0,\cdots,N_{\zR}$, $j = 0,\cdots,N_q$ and obtain
			\begin{align*}
				\label{conner}
				V^n_{0,0} &= 2V^n_{1,0} - V^n_{2,0}, \\
				V^n_{0,N_q} &= 2V^n_{0,N_q-1} - V^n_{0,N_q-2}, \\
				V^n_{N_{\zR},0} &= 2V^n_{N_{\zR},1} - V^n_{N_{\zR},2}, \\
				V^n_{N_{\zR},N_q} &= 2V^n_{N_{\zR},N_q-1} - V^n_{N_{\zR},N_q-2}.
			\end{align*}
			The approximate optimal control in Equation \eqref{doptcont} and the value functions from the system \eqref{linsys} can be solved by using Algorithm 1, starting at the terminal time $N_t$. 
			\begin{table}[!h]
				\label{algorithm1}
				\begin{center}
					\begin{tabular}{l} 
						\hline
						\textbf{Algorithm}: Backward recursion algorithm \\
						\hline
						\textbf{Result:} Find the value function $V$ and the optimal strategy $\alpha^{*n}_{\ell, j}$ \\
						\vspace{.25cm}
						\textbf{Step 1} Compute for all $(\zR_\ell,q_j)\in \mathcal{G}_{\zR} \times \mathcal{G}_q$\\
						\hspace{5cm} $V(N_t,\zR_\ell,q_j) = \Phi(q_j)$ \\
						\vspace{.25cm}
						\textbf{Step 2} For $n:=N_t-1,\ldots,1,0$ compute for all $(\zR,q)\in \mathcal{S}\times [q_{min},q_{max}]$ \\
						\hspace{2.5cm} $\alpha^{*n}_{\ell, j} = \displaystyle\argmin_{a\in \mathcal{K}^n_d(\zR_\ell,q_j)}\Big\{\int_{t_n}^{t_{n+1}} e^{-\delta (s-t)}\Psi(t,\zR_\ell,a)
						\mathrm{d}s+e^{-\delta \Delta t}V^{n+1}_{j(\ell,n)}\Big\}.$ \\
						\vspace{.25cm}
						Compute the following value function $V(n,\zR_\ell,q_j)$ as in Theorem \ref{derivDS} \\
						\hspace{2.5cm} $V(n,\zR_\ell,q_j)=\mathbb{E}_{t_n,\zR_\ell,q_j}\Big\{\int_{t_n}^{t_{n+1}} e^{-\delta (s-t)}\Psi(t,\ZR(s),\alpha^{*n}_{\ell, j})
						\mathrm{d}s+e^{-\delta \Delta t}V^{n+1}_{j(\ell,n)}\Big\}.$ \\
						\hline
					\end{tabular}
				\end{center}
			\end{table}
			In order to solve for $\alpha_{\ell, j}^{*n}$, we first distinguish cases of unsatisfied demand and overproduction since they inform us about which subinterval of $\mathcal{K}^n_d$ is to be considered. Then the optimization step is implented in order to return the control with the smallest/minimum objective function. This control then represents the optimal control given specific
			$\zR$ and $q$. The above step is repeated until all optimal controls are obtained.
			
			\section{Numerical results}
			\label{numericalresults}
			In this section we discuss numerical results of the prosumer's optimal control problem. We implement the difference scheme we derived using the DPP based on a time and state discretization. The simulations based on the above backward recursion algorithm aim to primarily determine the value function and the optimal charging and discharging strategies of the ES at any discrete time points and at any grid point in the discretized state-space $\overline{\mathcal{X}}$, and subsequently gain meaningful insights on the properties of the value function. We present the results considering the case of a liquidation problem. That is
			\begin{equation}
				\label{liquidation}
				\Phi(q) = -S_{liq}m_Qc_P(q-q_{min}),
			\end{equation}
			with the liquidation price satisfying the condition $S_{liq} < S_{sell}(T), \quad T>0$.\newline
			For the purpose of the numerical simulations, we consider the seasonality function
			\begin{equation}
				\label{seasfunc1}
				\muR(t)=L_{0}+ L\cos\Big(\frac{2\pi}{\rho}(t-\widetilde{t})\Big), 
			\end{equation}
			with $\rho = 365\times24~h$ and $\widetilde{t} = 0$. The parameters $L_0$ and $L$ are calibrated to the model in such a way that starting with a full storage at initial time, the prosumer can drive ES empty at a given time after satisfying the residual demand $L_0+L$. The parameter $\gamma$ is calibrated such that starting with a full storage and assuming no charging or discharging, ES self discharges and reaches a certain level at a specified time.\newline
			In the case of the yearly heat price, we consider for the purpose of the numerical simulations, $S_{buy}$ and $S_{sell}$ defined as
			\begin{equation}
				\label{SbuySsell}
				S_{buy}(t)=L^S_0+ L^S\cos\Big(\frac{2\pi}{\rho_S}(t-t_S)\Big), \quad S_{sell}(t) = S_{buy}(t) - \xi,
			\end{equation}
			and choose the values $L_0^S = 0.17$~$[\EUR/kWh]$, $L^S = 0.15$~$[\EUR/kWh]$ $\xi = 0.02$~$[\EUR/kWh]$, $\rho_S = 365\times 24~h$ and $t_S=0$. Equation \ref{SbuySsell} reflects the fact that heat is more valuable in cold seasons than it is in warm seasons. Hence $S_{buy}$ and $S_{sell}$ become higher for a higher seasonality function $\muR$ and decrease with respect to $\muR$ as well.\newline
			Recall that for $\alpha \in [0,1]$, $\alpha = 0$ implies that the prosumer satisfies all its residual demand from ES, whereas $\alpha = 1$ means that the prosumer satisfies all its residual demand from CHS. In the following, we consider as ES a cylinder of radius $r_d = 1~m$, height $h_0 = 2.5~m$ resulting in $V=7.85~m^3$ and surface area $A=21.99~m^2$. The mass of water in ES is $m_Q=7854 ~kg$ with specific heat capacity $c_P=0.0012~\frac{kWh}{kg K}$. We assume that minimum and maximum temperature in ES are $q_{min}=25 ^{\circ}C$ and $q_{max}=85 ^{\circ}C$ respectively and derive the maximum amount energy that can be stored in ES as $\mathcal{E}=m_Qc_P(q_{max}-q_{min})=547.94~kWh$. \newline
			For the deseasonalized residual demand $\ZR$, using the $3\sigma$-rule, we obtain $\zR_{min} = -2~kW$, $\zR_{max} = 2~kW$.\newline
			We consider as time horizon $T = 365\times 24~h = 8760~h$ and time step size $\Delta t = 1~h$.
			The full description of the model parameters can be found in Table \ref{tab:1}.\newline
			In the following, we let $Q^n_{amb} = q_{min}$ for simplicity and consider 3 cases; the basic, the strong seasonality and the weak insulation cases to illustrate our numerical results. Let $L^{Basic}_0, L^{Basic}$ denote respectively, the long term residual demand and amplitude of seasonality for the basic case. $L^{Strong}_0, L^{Strong}$ denote respectively, the long term residual demand and amplitude of seasonality for the strong seasonality case. We also discuss the limiting case of perfect insulation, i.e. $\gamma^{Perfect} = 0$ in order to compare the value functions of all cases.
			\begin{table}[!h]
				\begin{center}
					\begin{tabular}{ |c|c||c|c| } 
						\hline
						Parameters & Numerical values & Parameters & Numerical values \\
						\hline 
						$L^{Basic}_0$ & 0.37 & $L^{Strong}_0$ & 0.37\\ [.5pt]
						$L^{Basic}$ & 1.00 & $L^{Strong}$ & 4.04\\ [5pt] 
						\hline
					\end{tabular}
					\caption[Parameter values of the long term residual demand and amplitude in $kW$.]{Parameter values of the long term residual demand and amplitude in $kW$.}
					\label{tab:3}
				\end{center}
			\end{table}
			The calibration of the parameters in Table \ref{tab:3} is provided in Section \ref{calibrationL_0L}.
			
			\subsection{Optimal strategy and value function}
			\paragraph{Terminal value function}
			We consider as terminal condition the liquidation problem at terminal time $T=8760~h$. From the formulation of the liquidation problem in Equation \eqref{liquidation}, we expect the terminal value function to be negative meaning that the prosumer sells all excess production to CHS for a revenue. In Figure \ref{TermVal}, we observe that the revenue is higher as the leftover thermal energy in ES increase. In addition, the liquidation problem suggests that the terminal condition is independent of the deseasonalized residual demand. This motivates the fact that the value function is constant in the residual demand. 
			\begin{figure}[ht!]			
				\centering
				\includegraphics[width=.75\textwidth]{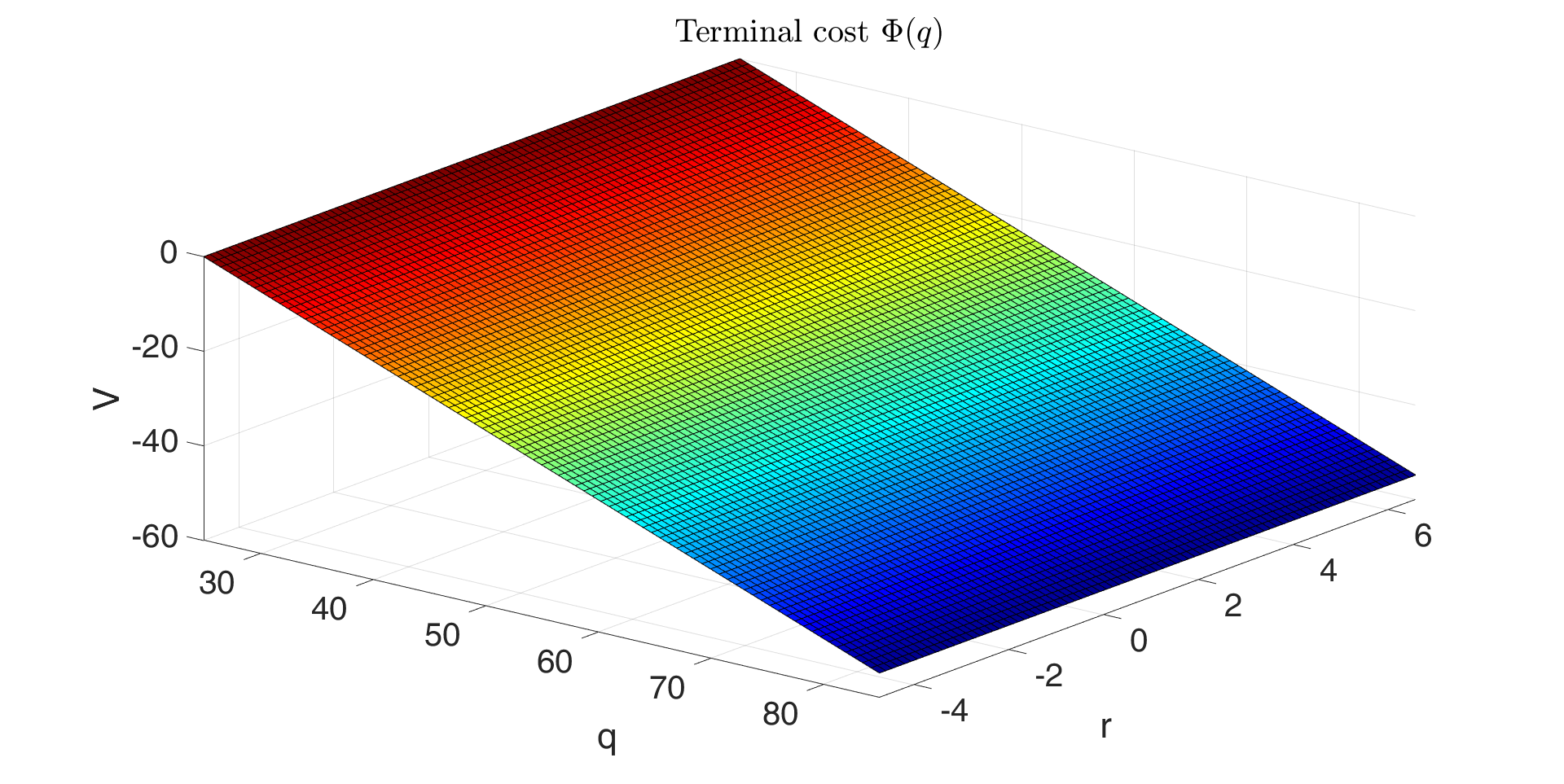}
				\caption[Terminal cost $\Phi$ with a liquidation problem.]{Terminal cost $\Phi$ with a liquidation problem.}
				\label{TermVal}
			\end{figure}
			
			Throughout the numerical results, our goal is to understand the impact of the seasonality and ES insulation on the model. To investigate the impact of the insulation, we present and discuss the results for the basic and weak insulation cases. In each of the 3 cases we discuss below, we show the optimal strategy for different time points in order to gain more insights. We also present the value function and optimal strategy at $t=0$ as functions of the ES temperature level and residual demand and discuss the results. For this, we show the value function on the left and the optimal strategy on the right. We note that our optimal control is of bang-bang type with the values changing according to the boundaries of the subintervals in $\mathcal{K}^n_d$.
			In the following, we present the numerical results only for 3 cases and subsequently report the values of the value functions on an appropriate table for further discussions. For a simplicity in the discussion of the optimal control for different time points, we illustrate the results for $\hat{\alpha}(t) = \alpha^*(t)sign(r)$, where $sign(r)$  is the sign function of the residual demand $r$ and $\alpha^*(t)~t\in[0,T]$. Depending on the sign of $r$, $\hat{\alpha}(t)$ only takes the values $-1,0,1$. For $\hat{\alpha}(t) = 1$, the prosumer purchases all unsatisfied demand from the CHS, whereas $\hat{\alpha}(t) = -1$ implies that the prosumer sells all overproduction to the CHS. $\hat{\alpha}(t) = 0$ means that the prosumer uses only the ES to satisfy its residual demand. Hence, in the case $r>0$, $\hat{\alpha}(t) = 0$ implies that all unsatisfied demand is satisfied from the ES while for $r<0$, $\hat{\alpha}(t) = 0$ means that all overproduction is stored in the ES.
			
			\paragraph{Case I: Value function and optimal strategy for the basic case}
			\label{caseI}
			In our context, the basic case is the reference case to which both the weak insulation and strong seasonality cases are compared to in order to understand the impact of seasonality and storage insulation to the model. Let $\gamma^{Basic}$ denote the heat transfert coefficient for the basic case. From the calibration (see Section \ref{calibration_gamma}), we obtain $\gamma^{Basic} = 2.34\times 10^{-4}~\frac{kW}{m^2 K}$. The parameters $L^{Basic}_0,~ L^{Basic}$ of the seasonality function are provided in Table \ref{tab:3}.\newline
			Figure \ref{NZG1Valfunct1BasicNew} shows results of the optimal strategy for $t = 0,~120~,250,~300$ days. In our model, for a non-constant seasonality, $t=0,~300~days$ correspond to periods dominated by cold weather leading to more unsatisfied demands. In contrast, $t=120,~250~days$ correspond to periods dominated by warm weather leading to more overproduction. For $t=0$, in the event of an unsatisfied demand, the prosumer buys all heat from CHS when ES is empty; otherwise all residual demand is satisfied by discharging ES. However, in the event of an overproduction, the prosumer mostly sells its overproduction to the CHS for a revenue. At $t=120$ days, in the event of an overproduction, we observe that the prosumer mostly sells all overproduction to CHS. Now, at $t=250$ days, for an unsatisfied demand, the prosumer mostly purchases all heat from CHS though ES in non-empty. This is because $S_{buy}$ is low and more unsatisfied demands are expected in the future for higher $S_{buy}$. Since the ES has a good insulation, the prosumer postpones the use of the ES in order to save the thermal energy in order to satisfy the higher unsatisfied demands ahead which will otherwise require a higher cost due to the higher future heat buying price, $S_{buy}$.  In the event of an overproduction, the prosumer charges the ES  and sells all excess production for $q=q_{max}$. Finally, for $t=300$, when faced with an overproduction, the prosumer mostly charges ES until it is sufficiently charged and then sells the excess production to CHS for a revenue. This is again because more unsatisfied demands are expected for even higher $S_{buy}$; and thanks to the good insulation, the ES can later be discharged for a lesser cost.
			\begin{figure}[ht!]			
				\hspace{-1.25cm}
				\includegraphics[width=1.15\textwidth]{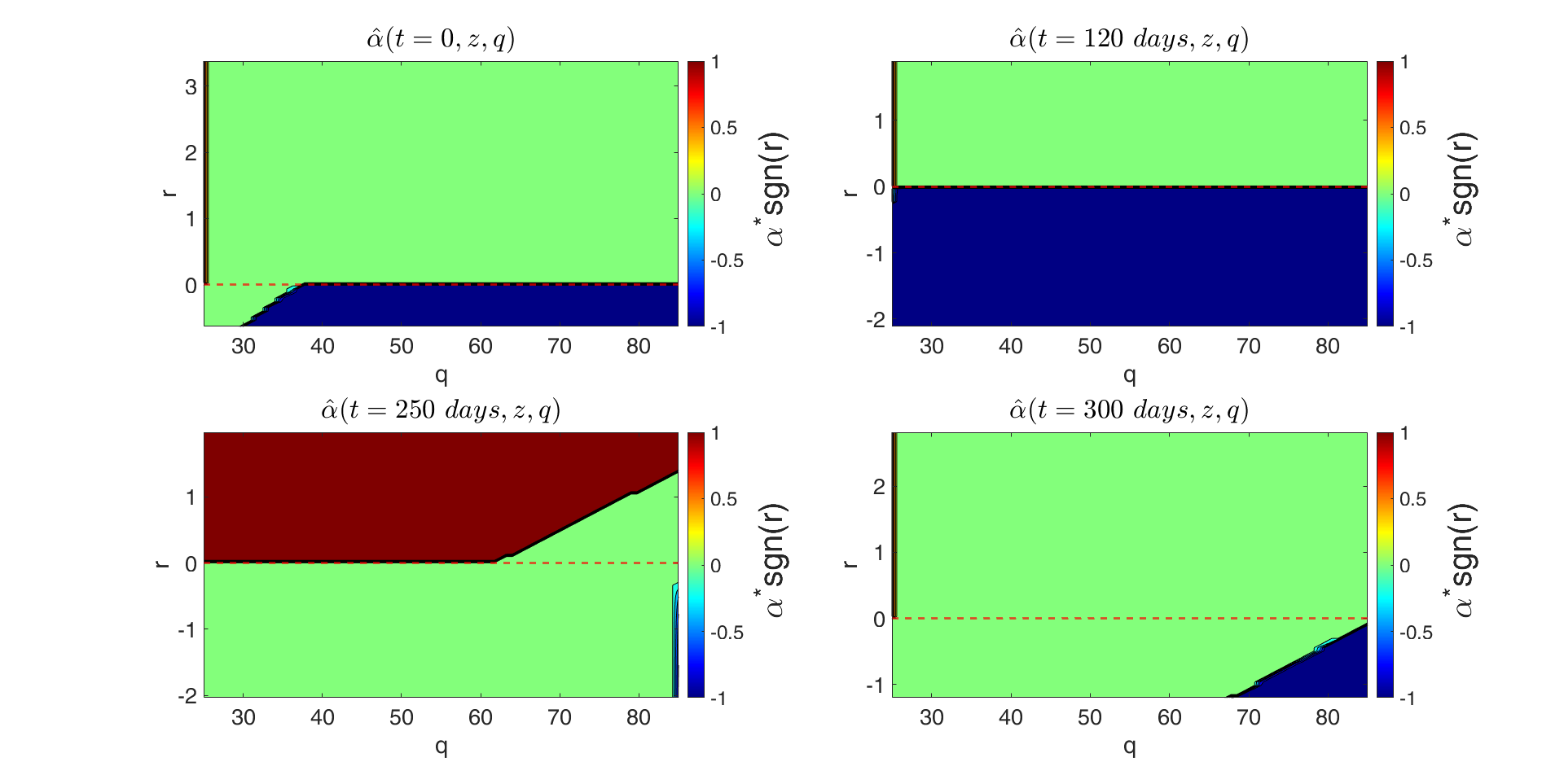}
				\caption[Optimal strategies at different time values for the basic case.]{Optimal strategy at $t = 0$ (Top left), $t = 120$ days (Top right), $t = 250$ days, (Bottom left), $t = 300$ days (Bottom right) for the basic case.}
				\label{NZG1Valfunct1BasicNew}
			\end{figure}
			\begin{figure}[ht!]			
				\hspace{-1.5cm}
				\includegraphics[width=1.15\textwidth]{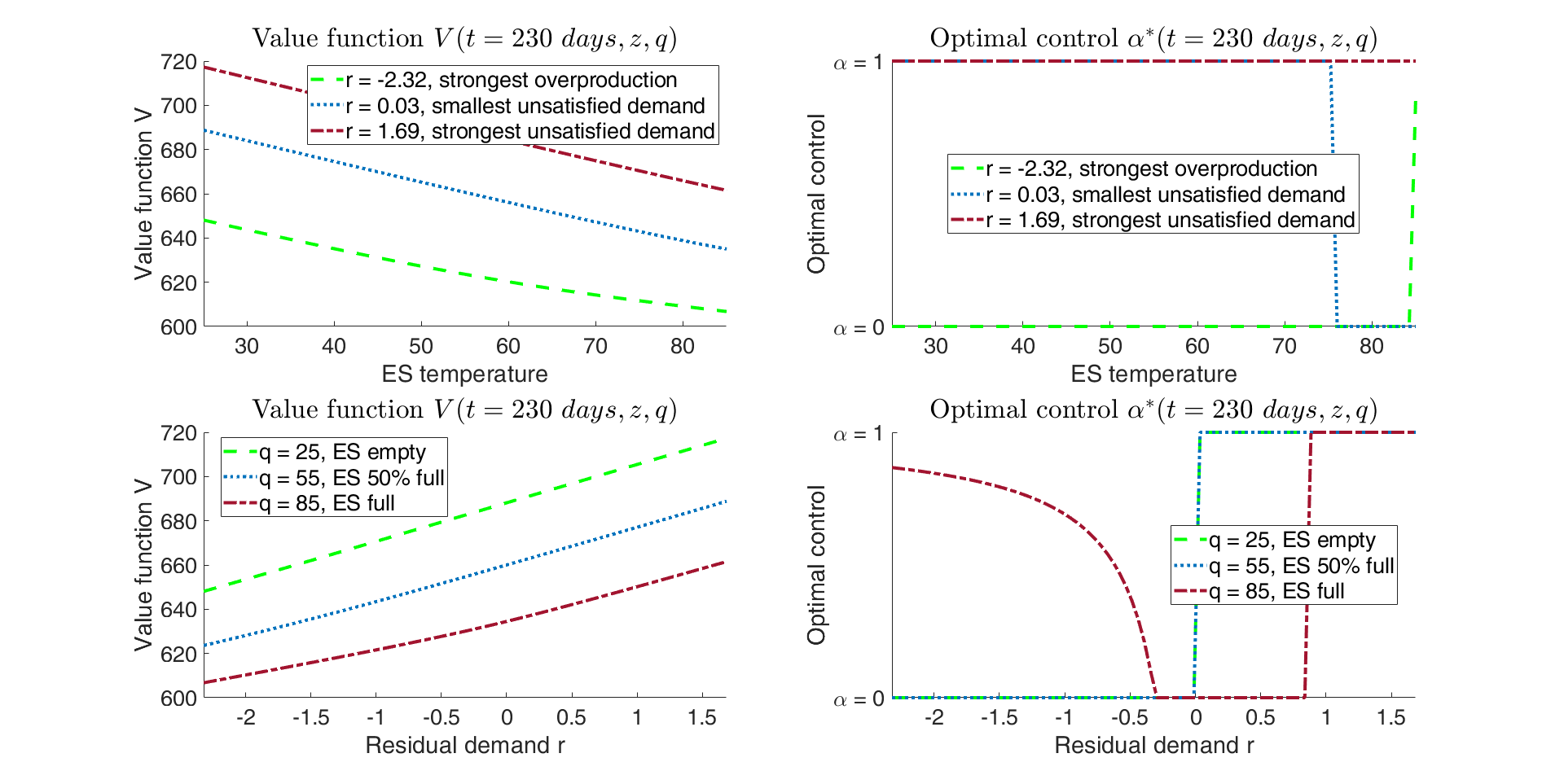}
				\caption[Value function and optimal strategy as functions of $Q^\alpha$ for different values of $R$ at $t=230~$days for the basic case.]{Value function (left) and optimal strategy (right) as functions of ES temperature $q$ and residual demand $r$ at $t=230~$days for the basic case.}
				\label{NZG1Valfunct2BasicNew}
			\end{figure}
			
			In Figure \ref{NZG1Valfunct2BasicNew}, we study the behavior of the value function and optimal strategy with respect to ES temperature $q$ and the residual demand $r$at time $t=230~$days. For the ES temperature level, we consider $q = 25,~55,~85^{\circ}\text{C}$ and $r = -2.32,~0.03,~1.69~kW$ for the residual demand.\newline
			The top left panel shows that for $r = 0.03,~1.69~kW$, the expected aggregated discounted cost (value function) of satisfying hot water and heating demand reduces as more temperature in the ES is available. Furthermore, we note that for a strongest unsatisfied demand, $r = 1.69~kW$, the prosumer incurs the highest expected aggregated discounted cost as compared to when faced with the smallest unsatisfied demand. In the top righ panel, for $r = 1.69~kW$, the prosumer purchases all the thermal energy despite the ES being sufficiently charged. This is due to the fact that for a good ES insulation, the prosumer postpones discharging the ES to satisfy the current unsatisfied demand since stronger unsatisfied demands are expected in the future for higher heat buying price $S_{buy}$. However for the smallest residual demand $r = 0.03~kW$, it discharges the ES. For the strongest overproduction, the prosumer charges the ES and only sells to the CHS when ES is full. \newline
			The bottom left panel shows that for a given ES temperature level, the value function increases with respect to the residual demand. However it is smaller when ES is fully charged as compared to an empty ES. In the bottom right panel, for $q=25^{\circ}\text{C},~55^{\circ}\text{C}$, the prosumer charges the ES in the event of an overproduction and discharges the ES otherwise. For a full ES, i.e. $q= q_{max}$, the prosumer compensates for the loss in the case of overproduction. For small unsatisfied demands, the prosumer discharges the ES and for stronger unsatisfied demands, it purchases thermal energy from the CHS.
			
			\paragraph{Case II: Value function and optimal strategy for a strong seasonality}
			In this parargaph we show the value function and optimal strategy for a strong seasonality. Further we study the behavior of the value function and optimal strategy with respect to the ES temperature $q$ and the residual demand $r$. For the illustrations below, we consider the values of the parameters $L^{Strong}_0,~L^{Strong}$ provided in Table \ref{tab:3}. We assume the same insulation as in the basic case, hence allowing to focus on the impact of the seasonality on the model. In Figure \ref{NZG1Valfunct1StrongNew}, we consider the time points $t=0,~120,~250,~300~$days. First, we note that the time points $t=0,~300~$days correspond to cold periods while $t=120,~250~$days correspond mostly to warm periods. In both the top left and bottom right panels, for an empty ES, the prosumer purchases thermal energy from the CHS, otherwise discharges ES. In the top right panel characterized mostly by overproduction, the prosumer sells all excess production to the CHS for a revenue, whereas in the down left panel it stores the overproduction if $q<q_{max}$. For $q = q_{max}$, the prosumer also sells the overproduction to the CHS for a revenue. In addition, when faced wih an unsatisfied demand in the top right panel, the prosumer purchases thermal energy from CHS if $q = q_{min}$ and otherwise discharges the ES. In contrast, in the bottom left panel, most unsatisfied demand is satisfied from the CHS.  
			
			\begin{figure}[ht!]			
				\hspace{-1.25cm}
				\includegraphics[width=1.15\textwidth]{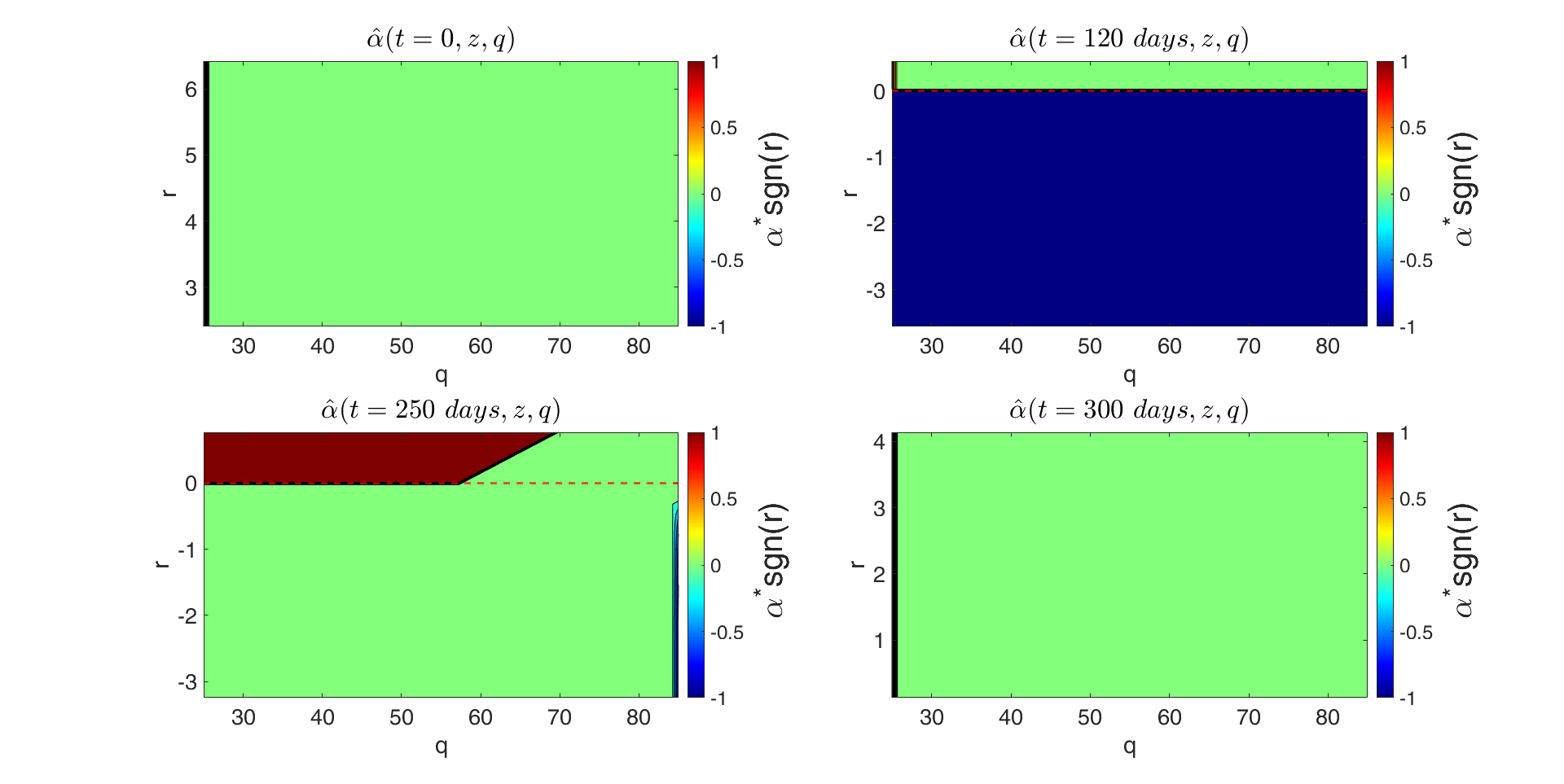}
				\caption[Optimal strategies at different time values for the strong seasonality case.]{Optimal strategy at $t = 0$ (Top left), $t = 120$ days (Top right), $t = 250$ days, (Bottom left), $t = 300$ days (Bottom right) for the strong seasonality case.}
				\label{NZG1Valfunct1StrongNew}
			\end{figure}
			
			In Figure \ref{NZG1Valfunct2StrongNew}, we also study the behavior of the value function and optimal strategy with respect to ES temperature $q$ and the residual demand $r$at time $t=230~$days. For the ES temperature level, we consider $q = 25,~55,~85^{\circ}\text{C}$ and $r = -4.40,~-1.67,~-0.39~kW$ for the residual demand. Since the top and bottom left panels follow a similar interpretation as in the basic case above, we focus solely on the top and bottom right panel. We observe in the top right panel that for $r = -4.40,~-1.67~kW$, the prosumer stores all excess production in the ES and only sells to the CHS for a full ES. For $r = -0.39~kW$, the prosumer also charges the ES and sells to the CHS if the ES is full or empty. It does not charge the ES immediately when it is empty since more overproduction is expecting in the future. In the bottom right panel, the prosumer charges the ES for $q = 25,~55^{\circ}\text{C}$ and compensates for the loss to the environment for $q = q_{max}$.
			
			\begin{figure}[ht!]			
				\hspace{-1.5cm}
				\includegraphics[width=1.15\textwidth]{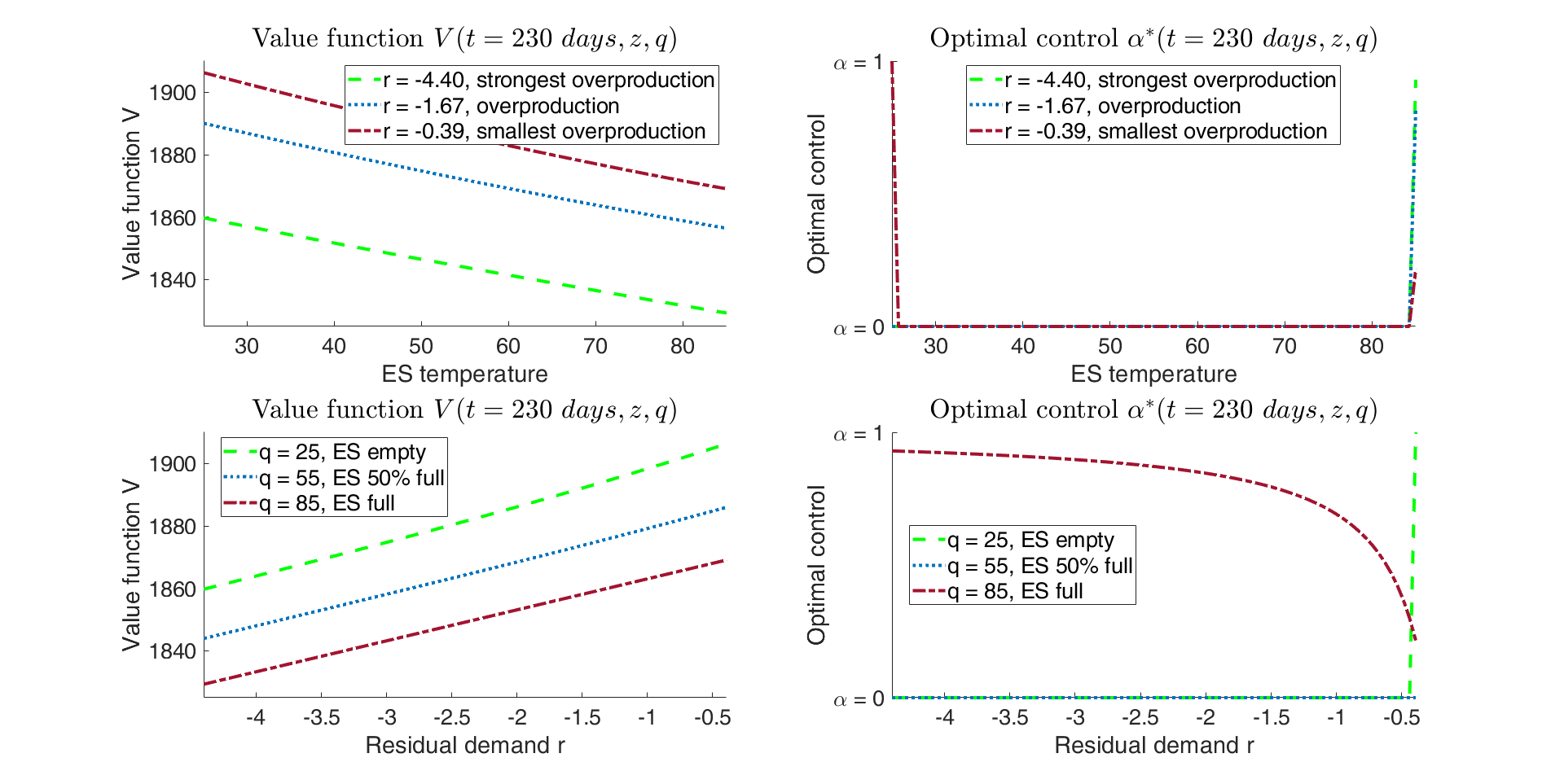}
				\caption[Value function and optimal strategy as functions of $Q^\alpha$ for different values of $R$ at $t=230~$days for the strong seasonality case.]{Value function (left) and optimal strategy (right) as functions of ES temperature $q$ and residual demand $r$ at $t=230~$days for the strong seasonality case.}
				\label{NZG1Valfunct2StrongNew}
			\end{figure}
			
			\paragraph{Case III: Value function and optimal strategy for a weak insulation}
			In this paragraph, we present and discuss the numerical results for the weak insulation case. For the seasonality function, we consider the same parameters $L^{Basic}_0,~L^{Basic}$ with values provided in Table \ref{tab:3}. Let $\gamma^{Weak}$ denote the heat transfer coefficient to the cold environment for a storage with weak insulation and take $\gamma^{Weak} = 2\gamma^{Basic} = 4.68\times 10^{-4}~\frac{kW}{m^2 K}$.
			In all panels of Figure \ref{NZG1Valfunct1WeakNew}, when faced with an unsatisfied demand, the prosumer discharges the ES for $q > q_{min}$ and otherwise purchases all thermal energy from the CHS for $q = q_{min}$. In the top left and down right panels, the prosumer sells most overproduction to the CHS, while in the top right panel, it sells all the overproduction to the CHS. In the down right panel, the prosumer first increases the ES level and afterward sells the overproduction to the CHS. 
			
			\begin{figure}[ht!]			
				\hspace{-1.25cm}
				\includegraphics[width=1.15\textwidth]{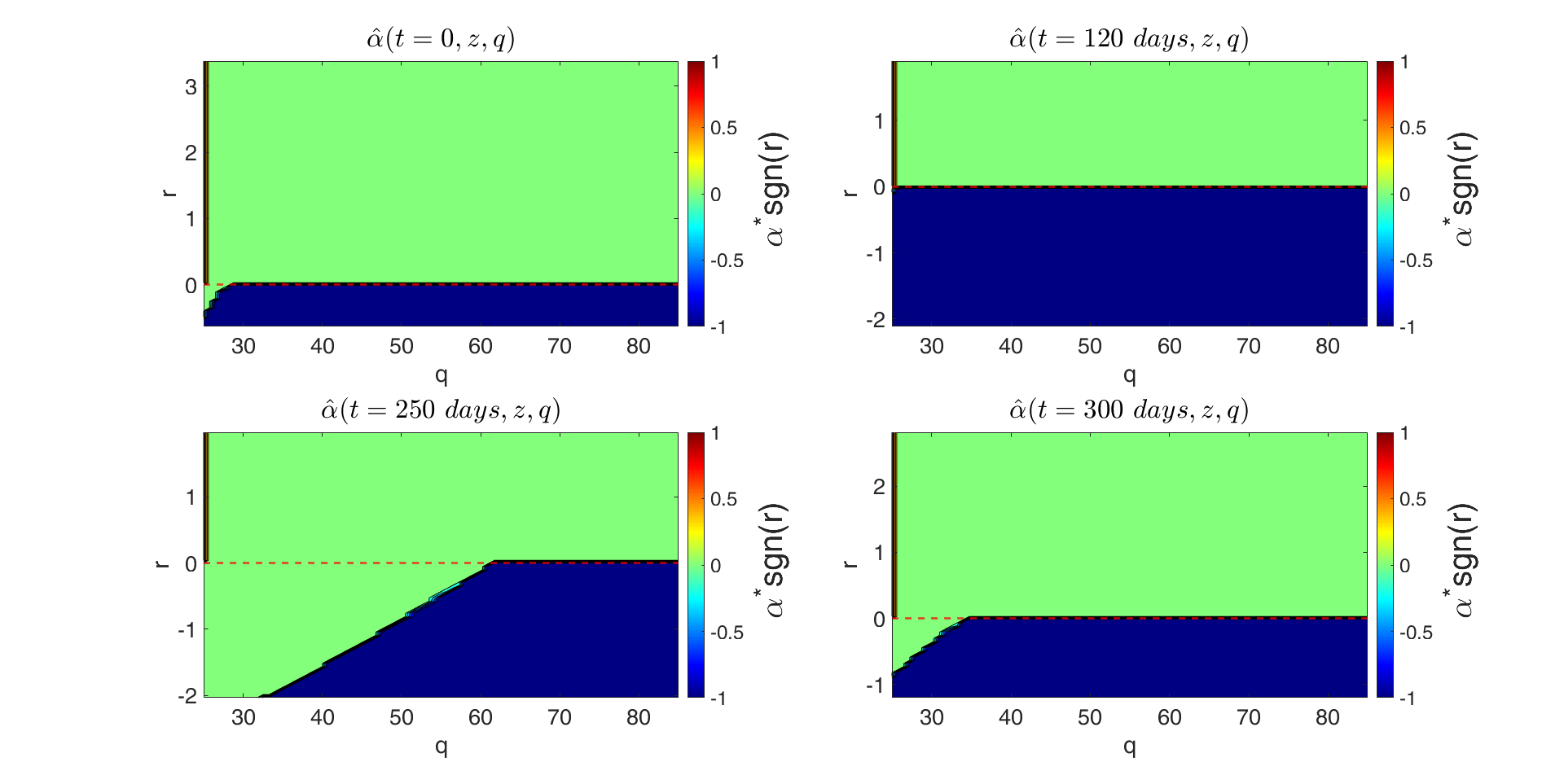}
				\caption[Optimal strategies at different time values for the weak insulation case.]{Optimal strategy at $t = 0$ (Top left), $t = 120$ days (Top right), $t = 250$ days, (Bottom left), $t = 300$ days (Bottom right) for the weak insulation case.}
				\label{NZG1Valfunct1WeakNew}
			\end{figure}
			
			In Figure \ref{NZG1Valfunct2WeakNew}, we focus the interpretaton on the top and bottom right panels since the other panels follow a similar interpretation as in the previous cases.\newline
			In the top right panel, in the case of an unsatisfied demand, the prosumer purchases all the thermal energy from the CHS if the ES is empty and otherwise discharges the ES if $q>q_{max}$. On the other hand, all the smallest unsatisfied demand is satisfied from discharging the ES. In the event of the strongest overproduction, the prosumer charges the ES while $q<q_{max}$ and subsequently sells the excess production to CHS only if $q=q_{max}$. \newline
			In the bottom right panel, for $q = 25^{\circ}\text{C},~55^{\circ}\text{C}$, the prosumer stores the excess production in the ES, whereas for an unsatisfied demand it discharges the ES. In the event of a full storage, the prosumer mostly compensates for the loss if faced with an overproduction. Furthermore, for an unsatisfied demand, the prosumer discharges the ES.
			
			\begin{figure}[ht!]			
				\hspace{-1.5cm}
				\includegraphics[width=1.15\textwidth]{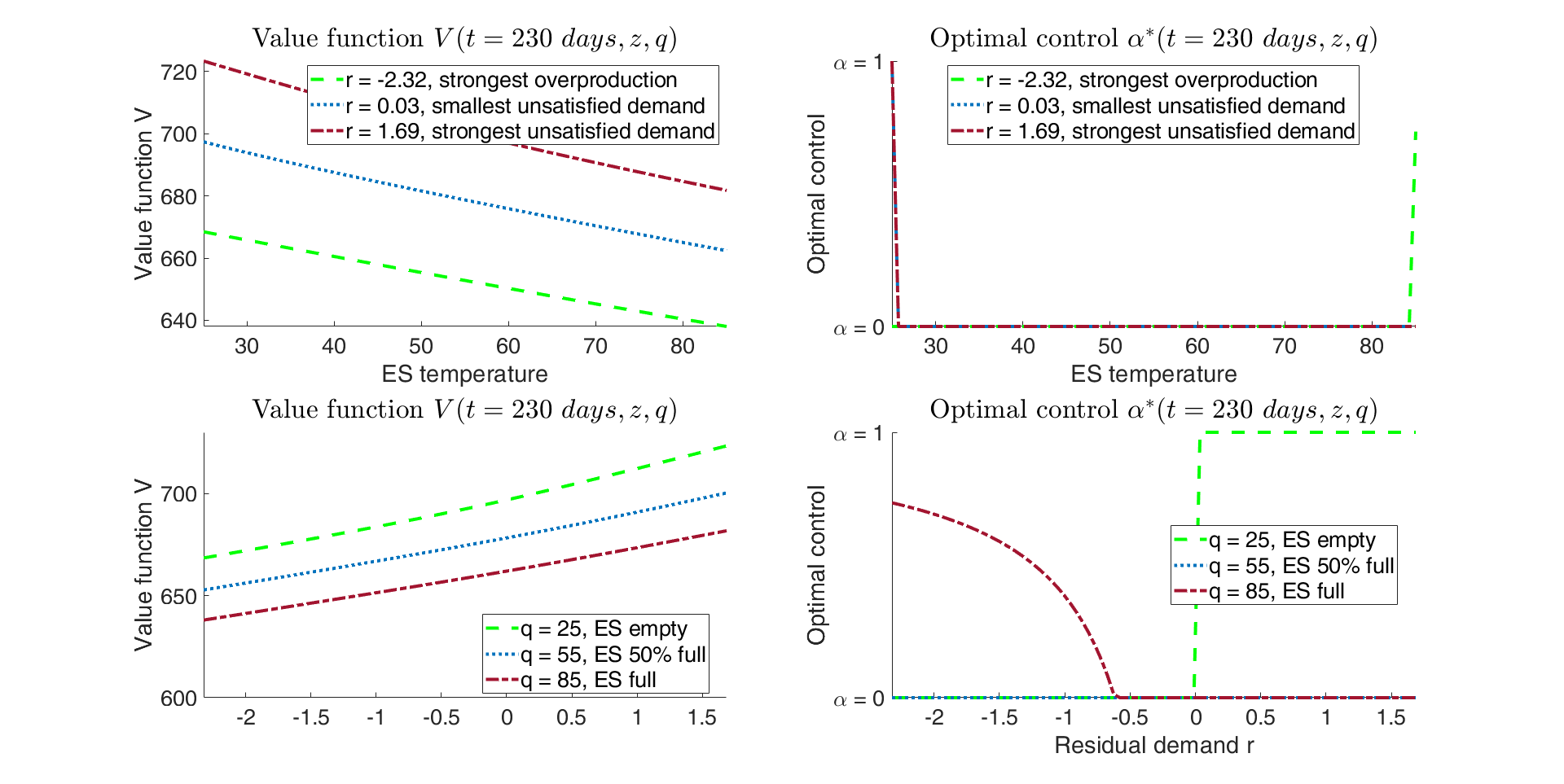}
				\caption[Value function and optimal strategy as functions of $Q^\alpha$ for different values of $R$ at $t=230~$days for the weak insulation case.]{Value function (left) and optimal strategy (right) as functions of ES temperature $q$ and residual demand $r$ at $t=230~$days for the weak insulation case.}
				\label{NZG1Valfunct2WeakNew}
			\end{figure}
			In Table \ref{tab:2} below, we present the maximum value function for the cases discussed above as well as the case of perfect insulation. Let $V^{Basic}_{max},~V^{Weak}_{max},~V^{Strong}_{max},~ V^{Perfect}_{max}$ denote the maximum value function for the basic, weak insulation, strong seasonality and perfect insulation cases, respectively. As expected, facing the same residual demand, a storage with a perfect insulation yields the smallest expected aggregated discounted cost of satisfying the hot water and heating demand of a household. In addition, we observe that in comparison to the basic case, a storage with weak insulation incurs a higher expected aggregated discounted cost. Furthermore for a strong seasonality, resulting in a bigger residual demand, the prosumer incurs a higher expected aggregated discounted cost as compared to the basic case.  
			\begin{table}[!h]
				\begin{center}
					\begin{tabular}{ |c|c||c|c| } 
						\hline
						Value function & Numerical values & Value function & Numerical values \\
						\hline 
						$V^{Perfect}_{max}$ & 1312.7 & $V^{Weak}_{max}$ & 1468.2\\ [5pt]
						$V^{Basic}_{max}$ & 1436.3 & $V^{Strong}_{max}$ & 3755.1\\ [.5pt]
						\hline
					\end{tabular}
					\caption[Maximum value function for all cases in Euros.]{Maximum value function for all cases in Euros.}
					\label{tab:2}
				\end{center}
			\end{table}
			\subsection{Optimal path of temperature level in ES}
			In the following, we present the optimal path $Q^{\alpha^*}$ for the respective optimal controls compute above. In all the figures, for $t\in [0,T]$, we show in addition to $Q^{\alpha^*}(t)$, the corresponding optimal absolute strategy, $\alpha^*(t)|R(t)|$, the seasonality function, $\muR(t)$, the residual demand, $R(t)$ as well as the minimim and maximum temperature levels $q_{min}$ and $q_{min}$. In all the figures below, we start with a fully charged external storage. We recall that Figures \ref{OptimalESBasic} and \ref{OptimalESWeak} have the same $R$ with different storage insulations. \newline
			In Figure \ref{OptimalESBasic}, we present $Q^{\alpha^*}$ for the basic case. For an unsatisfied demand, the prosumer discharges the ES until it is empty. While ES is empty, the prosumer satisfies all unsatisfied demand by purchasing thermal energy from the CHS. Until  mid May, most of the overproduction is sold to CHS for a revenue. From mid May, all overproduction is stored until the ES becomes full. We also note the prosumer compensates for the loss to keep the ES full during the overproduction. Once the ES is fully charged, most unsatisfied demands are satisfied by discharging the ES. Since the ES has a good insulation, the prosumer drives ES full earlier.   
			\begin{figure}[ht!]			
				\centering
				\includegraphics[width=1\textwidth]{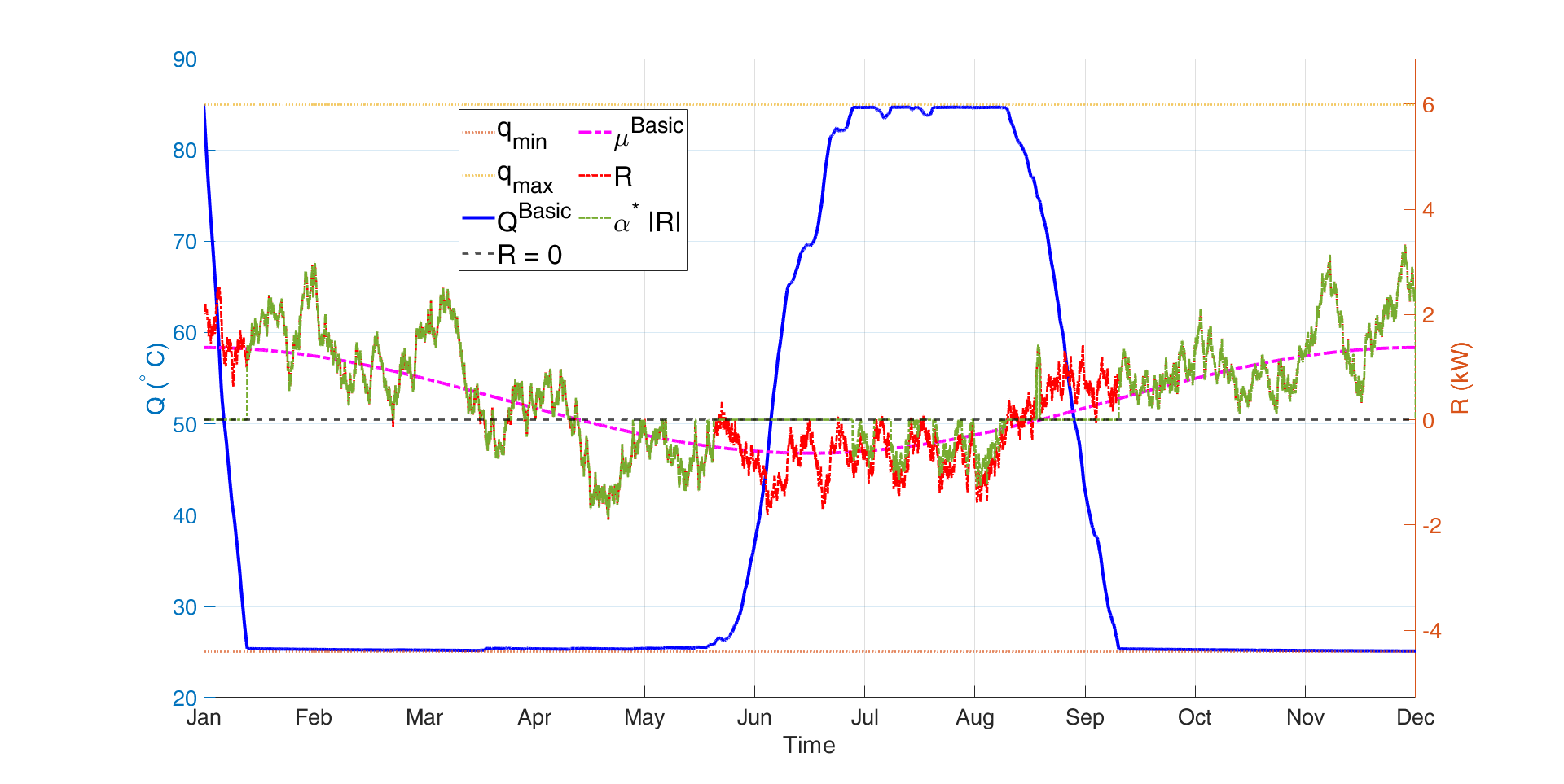}
				\caption[Optimal path of $Q^\alpha$ for the basic case and a liquidation problem.]{Optimal path of $Q^\alpha$ for the basic case and  a liquidation problem.}
				\label{OptimalESBasic}
			\end{figure}
			
			In Figure \ref{OptimalESWeak}, we present $Q^{\alpha^*}$ for the weak  case. Unlike in Figure \ref{OptimalESBasic}, we observe that the prosumer could not drive the ES full due to the weak insulation.
			\begin{figure}[ht!]			
				\centering
				\includegraphics[width=1\textwidth]{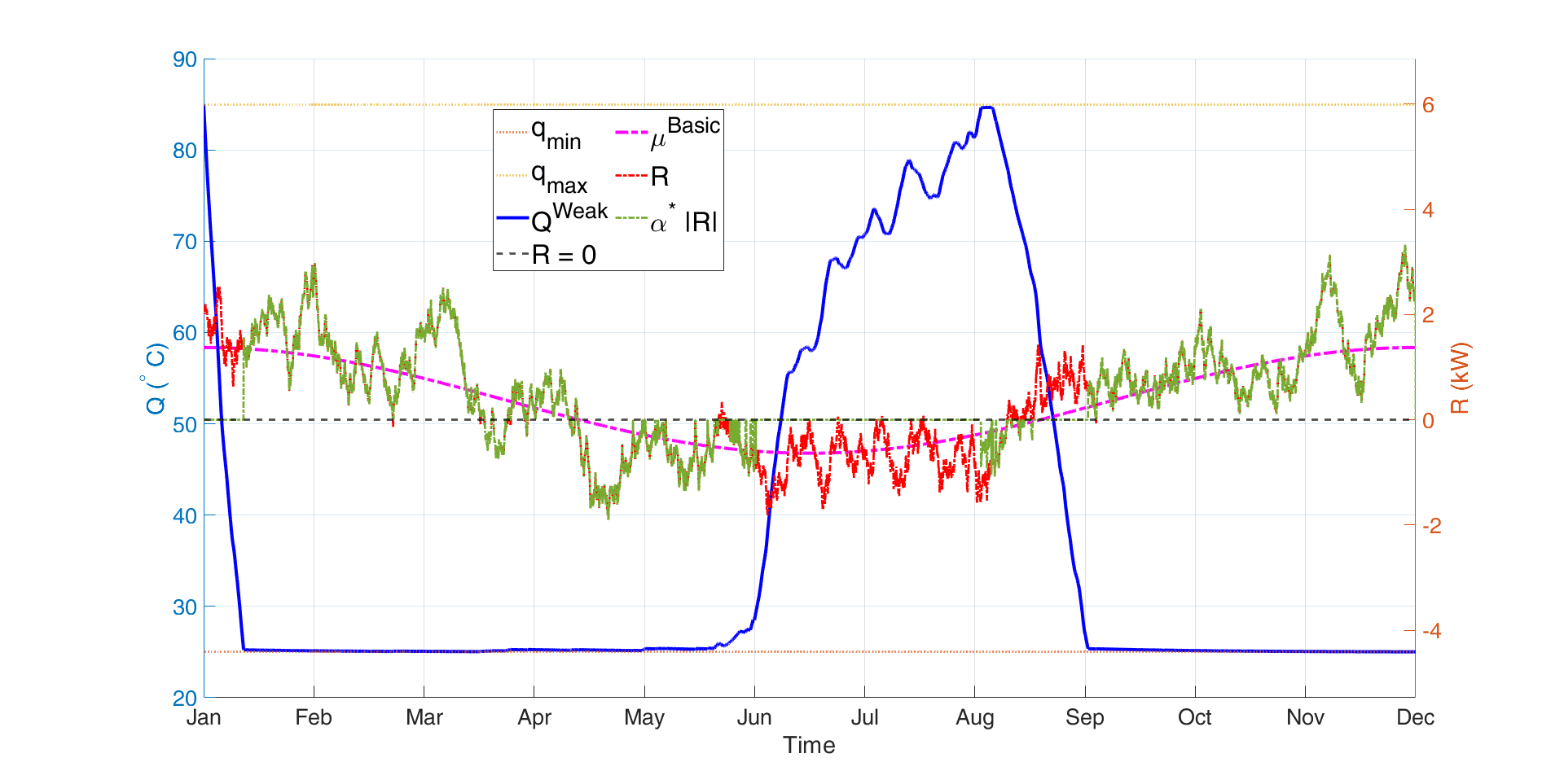}
				\caption[Optimal path of $Q^\alpha$ for the weak insulation case and a liquidation problem.]{Optimal path of $Q^\alpha$ for the weak insulation case and  a liquidation problem.}
				\label{OptimalESWeak}
			\end{figure}
			
			In Figure \ref{OptimalESStrong}, we present $Q^{\alpha^*}$ for the strong seasonality case. We recall that in this case, the prosumer is equipped with a storage with a good insulation. For an unsatisfied demand, the prosumer discharges the ES until it is completely empty. While the ES is an empty, all unsatisfied demands are satisfied from the CHS at a cost. Until June, all overproduction is sold to CHS for a revenue tough the ES is empty. From mid-June, the prosumer slowly charges the ES until it becomes completely full. Once the ES is full, the prosumer compensates the loss to the environment for any excess production. 
			\begin{figure}[ht!]			
				\centering
				\includegraphics[width=1\textwidth]{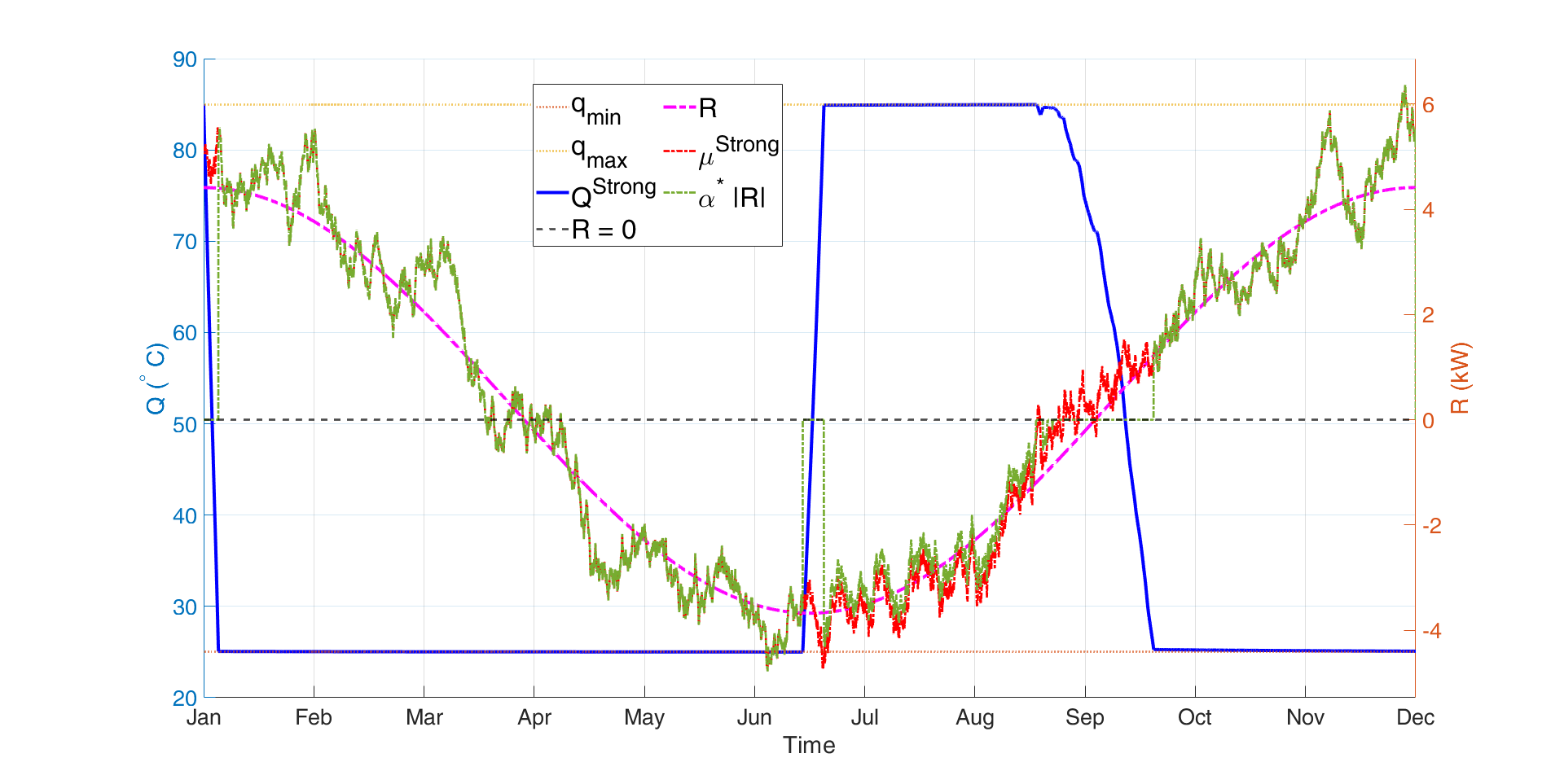}
				\caption[Optimal path of $Q^\alpha$ for the strong seasonality case and a liquidation problem.]{Optimal path of $Q^\alpha$ for the strong seasonality case and  a liquidation problem.}
				\label{OptimalESStrong}
			\end{figure}

			\section{Conclusion}
			\label{conclusion}
			In this project, we have investigated the optimal management of a residential heating system under uncertainty of the future residual demand. The aim to minimize the expected aggregated discounted cost of satisfying hot water and heat demand led to the formulation of a stochastic optimal problem under constraints which naturally arose from our modelling perspective. We derived the Hamilton-Jacobi-Bellmann (HJB) equation and motivated a discrete-time scheme to perform the numerical experiments and obtain the optimal strategies and value function.
			We illustred the results in 3 cases consisting of the basic setting, the weak insulation and the strong seasonality cases. We also considered the limiting case of perfect insulation and subsequently compared the maximum values functions (see Table \ref{tab:2}).
			
			\newpage
			\begin{table}[!h]
				\begin{center}
					\begin{tabular}{ |l r|c c| } 
						\hline
						Description &  & Numerical values & Units \\
						\hline
						\textbf{Residual demand}&  &  &   \\ [0pt]
						Speed of mean reversion & $\kappaR$ & 0.0063 & \text{h}$^{-1}$ \\ [5pt] 
						Volatility & $\sigmaR$ & 0.075 & \text{kW}/h$^{1/2}$ \\ [5pt]
						Basic case long term residual demand & $L^{Basic}_0$ & $0.37$ & \text{kW} \\ [5pt]
						Basic case amplitude of seasonality & $L^{Basic}$ & $1.00$ & \text{kW} \\ [5pt]
						Strong seasonality case long term residual demand & $L^{Strong}_0$ & $1.38$ & \text{kW} \\ [5pt]
						Strong seasonality case amplitude of seasonality & $L^{Strong}$ & $3.04$ & \text{kW} \\ [5pt]
						Length of seasonality & $\rho$ & 8760 & h \\ [5pt]
						Reference time of seasonality & $\widetilde{t}$ & 0 & h \\ [5pt]
						\hline
						\textbf{External storages}&  &  &   \\ [.5pt]
						Calibration time for $\gamma^{Basic}$ & $t^*$ & 720 & h \\ [5pt]
						Mass of water & $m_Q$ & 7854 & kg \\ [5pt]  
						Specific heat capacity of water & $c_P$ & $0.0012$ & kWh/kg K  \\ [5pt]
						Radius & $r_d$ & 1 & $m$\\ [5pt]
						Height & $h_0$ & 2.5 & $m$\\ [5pt]
						Surface area & $A$ & 21.99 & $m^2$\\ [5pt] 
						Volume & $V_0$ & 7.854 & $m^3$\\ [5pt]
						Energy capacity & $\mathcal{E}$ & 547.94 & \text{kWh} \\
						Heat transfer coefficient for basic case & $\gamma^{Basic}$ & $2.34\times 10^{-4}$ &  \text{kW/$m^2$ K}\\ [5pt]
						Heat transfer coefficient for weak insulation case & $\gamma^{Weak}$ & $4.68\times 10^{-4}$ &  \text{kW/$m^2$ K}\\ [5pt]
						Perfect heat transfer coefficient & $\gamma^{Perfect}$ & $0$ &  \text{kW/$m^2$ K}\\ [5pt] 
						Minimum temperature & $q_{min}$ & 25 & $^{\circ}$C \\ [5pt] 
						Maximum temperature & $q_{max}$ & 85 & $^{\circ}$C \\ [5pt]
						Calibration temperature for $\gamma$ & $\widetilde{q}$ & $65$ &  $^{\circ}$C \\
						\hline
						\textbf{Cost functions}&  &  &   \\ [.5pt]
						Discounting factor & $\delta $ & $1.712\times10^-6$ & h$^{-1}$  \\ [5pt]
						Electricity price & $S_{el}$ & 0.33 & \EUR/kWh \\ [5pt]
						Liquidation price & $S_{liq}$ & 0.004 & \EUR/kWh\\ [5pt]
						Penalty price & $S_{pen}$ & 0.32 & \EUR/kWh\\ [5pt]
						Ordinary pump penalty constant & $d_1$ & $0.01$ &  \\ [5pt]
						Heat pump penalty constant & $d_2$ & $0.012$ & \text{K}$^{-1}$ \\ [5pt]
						Heat pump output temperature & $\pi_d$ & 25 & $^{\circ}$C \\  [5pt]
						Pipe constant temperature & $P_c$ & 20 & $^{\circ}$C  \\
						Penalty temperature in ES & $q_{pen}$ & 50 & $^{\circ}$C \\ [5pt] 
						Time horizon & $T$ & 8760 & h \\ 
						\hline
						\textbf{Discretization}&  &  &   \\ [.5pt]
						Time step & $\Delta t$ & 1 & \text{h} \\ 
						Number of time points & $N_t$ & 8760 &  \\ 
						Number of grid points for q & $N_q$ & $80$ &  \\
						Number of grid points for $\zR$ & $N_{\zR}$ & $85$ &  \\
						\hline
					\end{tabular}
					\caption[Model and discretization parameters.]{Model and discretization parameters.}
					\label{tab:1}
				\end{center}
			\end{table}
			
			\newpage \clearpage
			\begin{appendix}	
				\section{List of Notations}
				\label{append_a}
				\begin{longtable}{p{0.3\textwidth}p{0.68\textwidth}l}
					$Q,~\ZR,~R$ & Temperature in ES, deseasonalized and residual demand &\\
					$T$ & Finite time horizon &\\
					$\alpha$ & Heat buying or selling rate & \\
					$\muR$ & Yearly seasonality function &\\
					$\kappaR,~\sigmaR$ & Mean reversion speed and volatility of residual demand &\\
					$L^{Basic}_0,~L^{Basic}$ & Basic case long term mean residual demand and seasonality function amplitude &\\
					$L^{Strong}_0,~L^{Strong}$ & Strong seasonality case long term mean residual demand and seasonality function amplitude &\\
					$\rho$ & Length of seasonality &\\
					$\tilde{t}$ & Reference time &\\
					$m_Q$ & Mass of water in the external storage &\\
					$c_P$ & Water specific heat capacity &\\
					$A$ & Surface area of external storage &\\
					$\gamma^{Basic}$ & Basic case heat transfer coefficient &\\
					$\gamma^{Weak}$ & Weak insulation case heat transfer coefficient &\\
					$Q_{amb}$ & Ambient temperature &\\
					$L_0^S,~L^S$ & Long term mean and amplitude of seasonality of heat price &\\
					$S_{buy},~S_{sell}$ & Heat buying and selling price &\\
					$S_{el}$ & Electricity price & \\
					$S_{liq},~S_{pen}$ & Liquidation and penalty prices & \\
					$\xi$ & Spread in heat price formulation &\\
					$q_{min},~q_{max}$ & Minimum and maximum storage levels &\\
					$\mathcal{K}_a$, $\mathcal{K}_d$ & Continuous, discrete-time state dependent set of feasible controls &\\
					$J,~V$ & Performance criterion and value function at time $t$ &\\ 
					$\Psi,~\Phi$ & Running cashflow and terminal cost & \\
					$\delta$ & Discount rate & \\
					$C_0$ & Cost of buying from or revenue of selling to CHS &\\
					$C_1,~C_2$ & Cost of using the heat and ordinary pumps & \\
					$P_c$ & Constant temperature in connecting pipe & \\
					$\pi_{min},~\pi_d$ & Minimum and desired temperatures in internal storage & \\
					$\tilde{a}$ & Decision rule & \\
					$\mathcal{A}$ & Set of admissible controls & \\
					$\mathcal{X}$ & Computational domain & \\
					$\overline{\mathcal{X}}$ & Truncated computational domain & \\
					$\mathcal{S}$ & Truncated domain of residual demand & \\
					$\epsilon$ & Tolerance level & \\
					$\zR_{min},~\zR_{max}$ & Minimum and maximum residual demands $\ZR$ & \\
					$\muR_{min},~\muR_{max}$ & Minimum and maximum seasonality function & \\
					$s_0$ & Asymptotic standard deviation of residual demand & \\
					$\mathcal{G} = \mathcal{G}_t\times \mathcal{G}_{N_{\zR}}\times \mathcal{G}_q$ & Discretized time, residual demand and temperature level & \\
					$t_n$ & Discrete time points & \\
					$\Delta t,~\Delta \zR,~\Delta q$ & time, residual demand and temperature level step sizes & \\
					$N_t,~N_{\zR},~N_q$ & Number of time, residual demand and temperature level points & \\
					$\Q^a$ & Path of the temperature in external storage for a fix control $a$ & \\
					$\Q^{n+1}_{j(\ell,n)}$ & Approximation of $\mathcal{Q}^a$ at discrete time $n+1$ & \\
					$\underline{\chi},~\overline{\chi}$ & Minimum buying and maximum selling rates &\\
					$\mathcal{L}_{\ZR},~\mathcal{L}_Q$ & Differential operator of residual demand and temperature level &\\
					$\mathcal{L}$ & Differential operator of shifted $\mathcal{L}_{\ZR}$ &\\
					$\Lambda$ & Discretized differential operator &\\
					$V^n_{\ell,j}$ & Value function at discrete time $n$ and grid point $(\ell,j)$ &\\
					$V^{n+1}_{j(\ell,n)}$ & Interpolation of $V^n_{\ell,j}$ &\\
					$\mathcal{E}$ & Maximum amount of energy stored in ES &\\
				\end{longtable}

				\section{Calibration of parameters}
				In this section we discuss the calibration of some model parameters and let $Q_{amb}(t) = q_{min}$ for simplicity. 
				
				\subsection{Calibration of $\gamma^{Basic}$}
				\label{calibration_gamma}
				We consider a full storage at initial time $t_0 = 0$ and assume no storage decision (i.e. $\alpha = 1$). We seek to determine the heat transfer coefficient $\gamma$ such that at a prescribed time $t^*$, the temperature in ES is at a prescribed level $\widetilde{q}$. Thus we consider
				\begin{align}
					\label{cal1}
					\mathrm{d}Q(t) &= -\frac{A\gamma}{m_Qc_{P}}(Q(t)-q_{min})\mathrm{d}t, \quad Q(t_0)=q_{max},
				\end{align} 
				requiring $Q(t^*) = \widetilde{q}$. Solving Equation \eqref{cal1} using the integrating factor technique, we deduce
				\begin{equation}
					\label{gamma}
					\gamma = \frac{m_Qc_P}{At^*}\log\Big(\frac{q_{max}-q_{min}}{\widetilde{q}-q_{min}}\Big).
				\end{equation}
				Taking $t^*=30~$days in Equation \eqref{gamma} yields $\gamma^{Basic}$.
				
				\subsection{Calibration of $L^{Basic}_0$, $L^{Basic}$, $L^{Strong}_0$, and $L^{Strong}$}
				\label{calibrationL_0L}
				For simplicity, we consider one seasonality component, i.e. $m=1$. Let $L_0+L$ denote the worst-case (strongest) residual demand and consider at time $t_0 = 0$ a fully charged ES. We assume that this fully charged ES can satisfy the demand $L_0+L$ until becoming empty at $t_s$. We now consider
				\begin{align}
					\label{cal2}
					\mathrm{d}Q(t) &= -\frac{1}{m_Qc_{P}}(L_0+L + A\gamma (Q(t)- q_{min}))\mathrm{d}t, \quad Q(t_0)=q_{max},
				\end{align}
				and require $Q(t_s) = q_{min}$.\newline
				First we suppose there is no seasonality (i.e. $L=0$) and take $t_s = t_1$. Solving Equation \eqref{cal2}, we deduce
				\begin{equation}
					\label{L0}
					L_0 = \frac{A\gamma(q_{max}- q_{min})}{e^{A_1}-1}, \quad \text{where} \quad  A_1 = \frac{A\gamma}{m_Qc_P}t_1.
				\end{equation}
				Similarly in Equation \eqref{cal2}, if $L\ne 0$ and taking $t_s = t_2$, we obtain using the integrating factor that
				\begin{equation}
					\label{L}
					L = \frac{A\gamma(q_{max}-q_{min})}{e^{A_2}-1} - L_0, \quad \text{with} \quad A_2 = \frac{A\gamma}{m_Qc_P}t_2.
				\end{equation}
				From Equations \eqref{L0} and \eqref{L}, taking $t_1 = 45~$days, $t_2 = 15~$days and $\gamma = \gamma^{Basic}$ give $L^{Basic}_0$ and $L^{Basic}$, respectively. Furthermore, taking $L^{Strong}_0 = L^{Basic}_0$, $t_2 = 5~$days and $\gamma = \gamma^{Basic}$ in Equation \eqref{L}, yield $L^{Strong}$. 
				
				\subsection{Calibration of $d_1$ and $d_2$}
				\label{calibrationd1_d2}
				Let $\underline{S}_{sell} := \displaystyle \min_{t\in [0,T]} S_{sell}(t)$ and $\underline{S}_{buy} := \displaystyle \min_{t\in [0,T]} S_{buy}(t)$ denote the minimum heat selling and buying prices, respectively. \newline
				From our modelling perspective, we expect the revenue from selling overproduction to CHS to be greater than the cost of electricity consumption to move the hot water. That is 
				\begin{equation}
					\label{d1}
					d_1S_{el} < \underline{S}_{sell}  \qquad \text{implying} \qquad d_1 < \frac{\underline{S}_{sell}}{S_{el}}.
				\end{equation}
				Hence we choose $d_1$ satisfying condition \eqref{d1}. \newline
				
				Similarly, we calibrate $d_2$ such that the cost of using the heat pump is smaller than the cost of buying heat from the CHS. That is 
				\begin{equation}
					\label{d2}
					(d_1+d_2(\pi_d-P_c))S_{el} < \underline{S}_{buy} \qquad \text{implying} \qquad d_2 < \frac{\underline{S}_{buy}-d_1S_{el}}{S_{el}(\pi_d-P_c)}.
				\end{equation}
				Similarly, we choose $d_2$ such that \eqref{d2} is satisfied.
				
				\section{Proofs}
				
				\subsection{Proof of Theorem \ref{derivDS}}
				\label{proofderivDS}
				Fix $(t_n, \zR_\ell, q_j)$ and let $\Delta t = t_{n+1}-t_n$ and rewrite Equation \eqref{dpp} as
				\begin{equation}
					\label{dDPE}
					V(t_n,\zR_\ell,q_j) = \inf_{\alpha \in \mathcal{A}}\mathbb{E}_{t_n,\zR_\ell,q_j}\Big[\int_{t_n}^{t_{n+1}}e^{-\delta (t-t_n)}\Psi(t,\ZR(t),\alpha(t))\mathrm{d}t+e^{-\delta \Delta t}V(t_{n+1},\ZR(t_{n+1}),Q^{\alpha}(t_{n+1}))\Big].
				\end{equation}
				Suppose that for $t\in [t_n,t_{n+1})$, $\ZR(t) = \zR_\ell$ (fixed and known) and $a = \alpha(t) = \alpha^n_{\ell, j}$ (unknown but constant).	Thus equation (\ref{dDPE}) becomes
				\begin{align}
					\label{dDPE1}
					V(t_n,\zR_\ell,q_j) &\approx \inf_{a \in \mathcal{K}^n_d(\zR_\ell,q_j)}\Big\{\int_{t_n}^{t_{n+1}}e^{-\delta (t-t_n)}\Psi(t,\zR_\ell,a)\mathrm{d}t+e^{-\delta \Delta t}V(t_{n+1},\zR_\ell,Q^a(t_{n+1}))\Big\}, 
				\end{align}
				where $V(t_{n+1},\cdot,\cdot)$ is known and $V(t_n,\cdot,\cdot)$ is to be determined.
				Substituting (\ref{dSLT}) into (\ref{dDPE1}) we derive the optimal control
				\begin{equation}
					\alpha^{*n}_{\ell, j} = \argmin_{a\in \mathcal{K}_d(\zR_\ell,q_j)}\Big\{\int_{t_n}^{t_{n+1}}e^{-\delta (t-t_n)}\Psi(t,\zR_\ell,a)\mathrm{d}t+e^{-\delta \Delta t}V(t_{n+1},\zR_\ell,\Q^{a,n+1}_{j(\ell,n)})\Big\}.
				\end{equation}
				Now we evaluate \eqref{dDPE} on the path $\Q^{\alpha^*}$ for the constant control $\alpha^* = \alpha^{*n}_{\ell, j}$ and ``release" the fixed residual demand and consider it as a stochastic process to obtain
				\begin{equation}
					\label{dDPE2}
					V(t_n,\zR_\ell,q_j) \approx \mathbb{E}_{t_n,\zR_\ell,q_j}\Big[\int_{t_n}^{t_{n+1}}e^{-\delta (t-t_n)}\Psi(\ZR(t),\alpha^{*n}_{\ell, j})\mathrm{d}t+e^{-\delta \Delta t}V(t_{n+1},\ZR(t_{n+1}),\Q^{\alpha^*,n+1}_{j(\ell,n)})\Big].
				\end{equation}
				From \eqref{dDPE2}, set $U(t,\zR) = V(t,\zR,\Q^{\alpha^*})$ and apply Feynman-Kac's formula to obtain the PDE
				\begin{align}
					\label{dHJB1}
					\frac{\partial U}{\partial t}(t,\zR) + \mathcal{L}U(t,\zR) + \Psi(r,\alpha^*) &= 0, \qquad \text{on~} [t_n,t_{n+1})\times \overline{\mathcal{X}}, \\
					U(t_{n+1},\zR) &= V(t_{n+1},\zR,\Q^{\alpha^*,n+1}_{j(\ell,n)}), 
				\end{align}
				where $\mathcal{L}U=\mathcal{L}_{\ZR}U-\delta U$.
				
				\subsection{Proof of Proposition \ref{positcond}}
				\label{proofpositcond}
				Let $\ell = 1,\ldots,N_{\zR}-1$ ~\text{and}~ $n = 0,\ldots,N_t-1$. From Equations \eqref{positive_theta} and \eqref{negative_theta} we note that if $\mathcal{D}_\ell \ge 0$ and $\mathcal{H}_\ell \ge 0$, then $\mathcal{F}_\ell \ge 0$. Hence, it is enough to investigate the positivity of $\mathcal{D}_\ell$ and $\mathcal{H}_\ell$.\newline
				From \eqref{positive_theta}, we have $\mathcal{H}_\ell\ge 0$. Now $\mathcal{D}_\ell\ge 0$ requires the condition $\sigmaR^2 \ge -2\Delta \zR \kappaR \zR_\ell$. \newline
				Taking the maximum on both sides of the above equation and recalling the expression of $\zR_{min}$ derived from the $3\sigma$-rule, yields $\sigmaR^2 \ge -2\Delta \zR \kappaR \zR_{min} = 6\Delta \zR \kappaR s_0$. \newline
				Also from Equation \eqref{negative_theta}, we note that $\mathcal{D}_\ell\ge 0$. $\mathcal{H}_\ell\ge 0$ requires $\sigmaR^2 \ge 2\Delta \zR \kappaR \zR_\ell$. \newline
				Again, taking the maximum on both sides  and recalling the expression of $\zR_{max}$ obtained from the $3\sigma$-rule, we derive $\sigmaR^2 \ge 2\Delta \zR \kappaR \zR_{max} = 6\Delta \zR \kappaR s_0$.\newline
				
				Therefore, from the above discussions, we derive the positivity condition, $\sigmaR^2 \ge 6\Delta \zR \kappaR s_0$.
				
				\subsection{Proof of Proposition \ref{lininterpol}}
				\label{prooflininterpol}
				Assume the CFL condition holds and let $b^n_{\ell,j}=\frac{\Delta t}{m_Qc_P\Delta q}\Big[(1-a)(\muR_n+ \zR_\ell) + A\gamma(q_j-Q_{amb}^n)\Big]$. Then, we wish to interpolate $V(t_{n+1},\zR_\ell,\Q^{a,n+1}_{j(\ell,n)})$ by function values $V^{n+1}_{\ell,j}$ and $V^{n+1}_{\ell,j-1}$.\newline
				If $\Q^{a,n+1}_{j(\ell,n)}$ lies between $q_j-\Delta q$ and $q_j$, then $V^{n+1}_{j(\ell,n)}$ is represented by
				\begin{equation}
					\label{interpolate1}
					V^{n+1}_{j(\ell,n)} = \Big(1-b^n_{\ell,j}\Big)V^{n+1}_{\ell,j}+b^n_{\ell,j}V^{n+1}_{\ell,j-1} \quad b^n_{\ell,j}\ge 0,\quad j=1,\ldots,N_q.
				\end{equation}
				Similarly, if $\Q^{a,n+1}_{j(\ell,n)}$ lies between $q_j$ and $q_j+\Delta q$, the linear interpolation is given by
				\begin{equation}
					\label{interpolate2}
					V^{n+1}_{j(\ell,n)} = \Big(1+b^n_{\ell,j}\Big)V^{n+1}_{\ell,j}-b^n_{\ell,j}V^{n+1}_{\ell,j+1} \quad b^n_{\ell,j}< 0,\quad j=0,\ldots,N_q-1.
				\end{equation}
				Combining both equations \ref{interpolate1} and \ref{interpolate2}, we derive
				\begin{equation}
					\label{Eq}
					V^{n+1}_{j(\ell,n)} = \left\{ \begin{array}{ll}
						\Big(1-b^n_{\ell,j}\Big)V^{n+1}_{\ell,j}+b^n_{\ell,j}V^{n+1}_{\ell,j-1} & \quad b^n_{\ell,j}\ge 0,\quad j=1,\ldots,N_q, \\
						\Big(1+b^n_{\ell,j}\Big)V^{n+1}_{\ell,j}-b^n_{\ell,j}V^{n+1}_{\ell,j+1} & \quad b^n_{\ell,j}< 0,\quad j=0,\ldots,N_q-1. 
					\end{array} \right.
				\end{equation}	
				From Equation \eqref{Eq} we obtain the following equivalent representation
				\begin{align}
					\label{approx}
					V^{n+1}_{j(\ell,n)} &= V^{n+1}_{\ell,j}-\frac{b^n_{\ell,j}+|b^n_{\ell,j}|}{2}\Big(V^{n+1}_{\ell,j}-V^{n+1}_{\ell,j-1}\Big)-\frac{b^n_{\ell,j}-|b^n_{\ell,j}|}{2}\Big(V^{n+1}_{\ell,j+1}-V^{n+1}_{\ell,j}\Big), \nonumber\\
					&= \Big(1-|b^n_{\ell,j}|\Big)V^{n+1}_{\ell,j}+\frac{b^n_{\ell,j}+|b^n_{\ell,j}|}{2}V^{n+1}_{\ell,j-1}-\frac{b^n_{\ell,j}-|b^n_{\ell,j}|}{2}V^{n+1}_{\ell,j+1}, \nonumber\\
					&= B^{(q,n)}_{\ell,j}V^{n+1}_{\ell,j}+A^{(q,n)}_{\ell,j}V^{n+1}_{\ell,j-1}+C^{(q,n)}_{\ell,j}V^{n+1}_{\ell,j+1},
				\end{align} 
				for $\ell=0,\ldots,N_{\zR},~j=1,\ldots,N_{q}-1,~n=0,\ldots,N_t-1$ and where $B^{(q,n)}_{\ell,j} = 1-|b^n_{\ell,j}|$, $A^{(q,n)}_{\ell,j} = \frac{b^n_{\ell,j}+|b^n_{\ell,j}|}{2}$ and $C^{(q,n)}_{\ell,j} = -\frac{b^n_{\ell,j}-|b^n_{\ell,j}|}{2}$.

				\section{Matrix $\mathcal{B}$ and $\mathcal{C}$}
				\label{MatrixBC}       
				Let $\mathcal{I}_{N_{\zR}-1}$ denote the $(N_{\zR}-1)\times (N_{\zR}-1)$ identity matrix. Then, the  $(N_{\zR}-1)\times (N_{\zR}-1)$ diagonal matrices $\mathcal{B}$ in Equation \eqref{linsys2} and $\mathcal{C}$ in Equation \eqref{linsys3} are given respectively by            
				
				\begin{equation}
					\mathcal{B} = (1+\Delta t \mathcal{F}_0)\mathcal{I}_{N_{\zR}-1}, \quad \mathcal{B} = (1+\Delta t \mathcal{F}_{N_{\zR}})\mathcal{I}_{N_{\zR}-1}.
				\end{equation}
				
			\end{appendix}	
			
			\begin{acknowledgements}
				The authors thank Olivier Menoukeu Pamen and Paul Honor\'e Takam for helpful discussions.
			\end{acknowledgements}
			
			
			\bibliographystyle{acm}
			\bibliography{references}

\begin{thebibliography}{10}

\bibitem{aid2020mckean}
{\sc A{\"\i}d, R., Basei, M., and Pham, H.}
\newblock A mckean--vlasov approach to distributed electricity generation
  development.
\newblock {\em Mathematical Methods of Operations Research 91\/} (2020),
  269--310.

\bibitem{alam2008new}
{\sc Alam, F., et~al.}
\newblock A new generation energy efficient residential house in australia.
\newblock In {\em Proceedings of the 4th BSME-ASME International Conference on
  Thermal Engineering\/} (2008), Bangladesh Society of Mechanical Engineers
  (BSME), pp.~710--718.

\bibitem{alasseur2020extended}
{\sc Alasseur, C., Ben~Taher, I., and Matoussi, A.}
\newblock An extended mean field game for storage in smart grids.
\newblock {\em Journal of Optimization Theory and Applications 184\/} (2020),
  644--670.

\bibitem{alasseur2023mfg}
{\sc Alasseur, C., Campi, L., Dumitrescu, R., and Zeng, J.}
\newblock Mfg model with a long-lived penalty at random jump times: application
  to demand side management for electricity contracts.
\newblock {\em Annals of Operations Research\/} (2023), 1--29.

\bibitem{chen2013energy}
{\sc Chen, X., Wang, L., Tong, L., Sun, S., Yue, X., Yin, S., and Zheng, L.}
\newblock Energy saving and emission reduction of china's urban district
  heating.
\newblock {\em Energy Policy 55\/} (2013), 677--682.

\bibitem{chen2008semi}
{\sc Chen, Z., and Forsyth, P.~A.}
\newblock A semi-lagrangian approach for natural gas storage valuation and
  optimal operation.
\newblock {\em SIAM Journal on Scientific Computing 30}, 1 (2008), 339--368.

\bibitem{connolly2012heat}
{\sc Connolly, D., Mathiesen, B., {\O}stergaard, P., M{\"o}ller, B., Nielsen,
  S., Lund, H., Persson, U., Werner, S., Gr{\"o}zinger, J., Boermans, T.,
  et~al.}
\newblock Heat roadmap europe 2050, second pre-study for the eu27. department
  of development and planning, alborg university, 2012.

\bibitem{courant1928partiellen}
{\sc Courant, R., Friedrichs, K., and Lewy, H.}
\newblock {\"U}ber die partiellen differenzengleichungen der mathematischen
  physik.
\newblock {\em Mathematische Annalen 100}, 1 (1928), 32--74.

\bibitem{duffy2013finite}
{\sc Duffy, D.~J.}
\newblock {\em Finite Difference Methods in Financial Engineering: a Partial
  Differential Equation Approach}.
\newblock John Wiley \& Sons, 2013.

\bibitem{ghisi2007electricity}
{\sc Ghisi, E., Gosch, S., and Lamberts, R.}
\newblock Electricity end-uses in the residential sector of brazil.
\newblock {\em Energy Policy 35}, 8 (2007), 4107--4120.

\bibitem{heiselberg2009application}
{\sc Heiselberg, P., Brohus, H., Hesselholt, A., Rasmussen, H., Seinre, E., and
  Thomas, S.}
\newblock Application of sensitivity analysis in design of sustainable
  buildings.
\newblock {\em Renewable Energy 34}, 9 (2009), 2030--2036.

\bibitem{hepbasli2009review}
{\sc Hepbasli, A., and Kalinci, Y.}
\newblock A review of heat pump water heating systems.
\newblock {\em Renewable and Sustainable Energy Reviews 13}, 6-7 (2009),
  1211--1229.

\bibitem{liu2014optimization}
{\sc Liu, X., Lau, S.-K., and Li, H.}
\newblock Optimization and analysis of a multi-functional heat pump system with
  air source and gray water source in heating mode.
\newblock {\em Energy and Buildings 69\/} (2014), 1--13.

\bibitem{lund2012heat}
{\sc Lund, H., and Werner, S.}
\newblock Heat roadmap europe 2050--first prestudy for the eu27.
\newblock {\em Euroheat \& Power, Mei\/} (2012).

\bibitem{nielsen2012excess}
{\sc Nielsen, S., and M{\"o}ller, B.}
\newblock Excess heat production of future net zero energy buildings within
  district heating areas in denmark.
\newblock {\em Energy 48}, 1 (2012), 23--31.

\bibitem{perez2008review}
{\sc P{\'e}rez-Lombard, L., Ortiz, J., and Pout, C.}
\newblock A review on buildings energy consumption information.
\newblock {\em Energy and Buildings 40}, 3 (2008), 394--398.

\bibitem{pham2009continuous}
{\sc Pham, H.}
\newblock {\em Continuous-Time Stochastic Control and Optimization with
  Financial Applications}, vol.~61.
\newblock Springer Science \& Business Media, 2009.

\bibitem{shardin2017partially}
{\sc Shardin, A.~A., and Wunderlich, R.}
\newblock Partially observable stochastic optimal control problems for an
  energy storage.
\newblock {\em Stochastics 89}, 1 (2017), 280--310.

\bibitem{singh2010factors}
{\sc Singh, H., Muetze, A., and Eames, P.~C.}
\newblock Factors influencing the uptake of heat pump technology by the uk
  domestic sector.
\newblock {\em Renewable Energy 35}, 4 (2010), 873--878.

\bibitem{takam2023stochastic}
{\sc Takam, P.~H.}
\newblock {\em Stochastic optimal control problems of residential heating
  systems with a geothermal energy storage}.
\newblock PhD thesis, BTU Cottbus-Senftenberg, 2023.

\bibitem{tommerup2007energy}
{\sc Tommerup, H., Rose, J., and Svendsen, S.}
\newblock Energy-efficient houses built according to the energy performance
  requirements introduced in denmark in 2006.
\newblock {\em Energy and Buildings 39}, 10 (2007), 1123--1130.

\bibitem{tommerup2006energy}
{\sc Tommerup, H., and Svendsen, S.}
\newblock Energy savings in danish residential building stock.
\newblock {\em Energy and Buildings 38}, 6 (2006), 618--626.

\bibitem{ware2013accurate}
{\sc Ware, A.}
\newblock Accurate semi-lagrangian time stepping for stochastic optimal control
  problems with application to the valuation of natural gas storage.
\newblock {\em SIAM Journal on Financial Mathematics 4}, 1 (2013), 427--451.

\end{thebibliography}
			\addcontentsline{toc}{chapter}{References}
		\end{document}